\date{}
\newtheorem{Theorem}{Theorem}[section]
\newtheorem{Lemma}{Lemma}[section]
\newtheorem{Remark}{Remark}[section]
\numberwithin{equation}{section} \theoremstyle{plain}
\def\R{{\textbf{R}}}
\def\p{\partial}
\def\R{\mathbb R}
\def\d{\mathrm{d}}
\def\va{\varepsilon}
\def\ds{\displaystyle}
\title {Multi-spots Steady States in Two-species Keller-Segel Models with Logistic Growth: Large Chemotactic Attraction Regime
 }
\author{ Fanze Kong \thanks{Department of Applied Mathematics, University of Washington, Seattle, WA 98195, USA; fzkong@uw.edu }, 
Juncheng Wei \thanks{Department of Mathematics, Chinese University of Hong Kong
 Shatin, NT, Hong Kong; wei@math.cuhk.edu.hk}
and 
Liangshun Xu \thanks{School of Mathematics and Information Sciences, Guangxi University, Nanning, P. R. China; lsxu@gxu.edu.cn }
      }
\begin{document}
        \date{}
\maketitle
\vspace{-0.4in}

\begin{abstract}
One of the most important findings in the study of chemotactic process is self-organized cellular aggregation, and a high volume of results are devoted to the analysis of a concentration of single species.  Whereas, the multi-species case is not understood as well as the single species one.  In this paper, we consider two-species chemotaxis systems with logistic source in a bounded domain $\Omega\subset \R^2.$  Under the large chemo-attractive coefficients and one certain type of chemical production coefficient matrices, we employ the inner-outer gluing approach to construct multi-spots steady states, in which the profiles of cellular densities have strong connections with the entire solutions to Liouville systems and their locations are determined in terms of reduced-wave Green's functions.  In particular, some numerical simulations and formal analysis are performed to support our rigorous studies.  
\end{abstract}

\bigskip

\section{Introduction and Main Results}\label{introductionsect}
In this paper, we consider the following two-species chemotaxis system with logistic growth in 2D:
\begin{align}\label{timedependent}
\left\{\begin{array}{ll}
u_{1t}=\Delta u_1-\chi_1\nabla\cdot(u_1\nabla v_1)+ \lambda_1 u_1(\bar u_1-u_1),&x\in\Omega,t>0,\\
u_{2t}=\Delta u_2-\chi_2\nabla\cdot(u_2\nabla v_2)+ \lambda_2 u_2(\bar u_2-u_2),&x\in\Omega,t>0,\\
v_{1t}=\Delta v_1-v_1+a_{11}u_1+a_{12}u_2,&x\in\Omega,t>0,\\
v_{2t}=\Delta v_2-v_2+a_{21}u_1+a_{22}u_2,&x\in\Omega,t>0,\\
\partial_{\textbf{n}}u_1=\partial_{\textbf{n}}u_2=\partial_{\textbf{n}}v_1=\partial_{\textbf{n}}v_2=0,&x\in\partial\Omega,t>0,\\
u_{1}(x,0)=u_{10}(x),~u_{2}(x,0)=u_{20}(x),~v_{1}(x,0)=v_{10}(x),~v_{2}(x,0)=v_{20}(x),&x\in\Omega,
\end{array}
\right.
\end{align}
where $\Omega$ is a bounded domain with the smooth boundary $\partial\Omega$ in $\mathbb R^2$, $\textbf{n}$ denotes the unit outer normal vector, $\chi_1>0$ and $\chi_2>0$ are chemo-attractive coefficients, $a_{ij}>0$ with $i,j=1,2$ are chemical production coefficients, and initial data $(u_{10},u_{20},v_{10},v_{20})$ is assumed to be smooth enough, non-negative and not identically equal to zero.  Here $u_1$ and $u_2$ are cellular densities of two species; $v_1$ and $v_2$ are chemical concentrations; $\lambda_1$ and $\lambda_2$ represent intrinsic cellular growth and $\bar u_1$ and $\bar u_2$ interpret the levels of carrying capacities.  Our goal in this paper is to construct multi-spots stationary solutions in (\ref{timedependent}) rigorously under an asymptotical limit $\chi_1, \chi_2\rightarrow +\infty$ with $\frac{\chi_1}{\chi_2}=O(1).$ 

\subsection{Chemotaxis and Keller-Segel Models}
Chemotaxis is a process in which uni-cellular or multi-cellular organisms direct their movements in response to chemical stimulus gradients.  This mechanism is ubiquitous in pathological and physiological processes such as morphogenesis, wound healing, tumor growth, cell differentiation, etc.  To describe the chemotactic phenomenon, E. Keller and L. Segel proposed a class of strongly coupled parabolic PDEs and some typical form reads as
 \begin{align}\label{proto}
\left\{\begin{array}{ll}
u_t= \overbrace{\Delta u}^{\text{cellular diffusivity}}-\overbrace{\chi \nabla\cdot (u\nabla v)}^{\text{chemotactic movement}}+\overbrace{f(u)}^{\text{ source}},&x\in\Omega,t>0,\\
v_t=\overbrace{\Delta v}^{\text{chemical diffusivity}}+\overbrace{g(u,v)}^{\text{chemical production/consumption}},&x\in\Omega,t>0,
\end{array}
\right.
 \end{align}
 where $\Omega$ is assumed to be a bounded domain in $\R^N$, $N\geq 1$ or the whole space.  Numerous studies are devoted to the analysis of spatial-temporal dynamics in one-population model (\ref{proto}) and we refer the readers to the survey papers \cite{hillen2009,horstmann2004,wang2013mathematics,painter2019mathematical}.  One of the most famous research results in the study of classical chemotaxis models is the so-called ``chemotactic collapse" \cite{childress1981,nanjundiah1973,horstmann2005,herrero1996,senba2000,senba2002,del2006}.  In particular, with regard to concentrated stationary solutions, it has been shown that cellular density $u$  asymptotically converges to the linear combination of several $\delta$-functions with some regular parts in 2D; meanwhile, the chemical concentration $v$ converges to the finite sum of Neumann Green's functions, where the coefficients are $8\pi$ or $4\pi$ depending on the locations of $\delta$-functions.  We mention that Del pino and Wei \cite{del2006} utilized the entire solution of the Liouville equation to approximate the cellular density $u$ asymptotically.

Compared to the comprehensive study of spontaneous concentration in the single-species Keller-Segel system, the localized patterns in the multi-species system are not well-understood and a few results devoted to the pattern formation in the multi-species counterpart, see \cite{wolansky2002multi,wolansky1997critical,wang2015time}.  We point out that Wang et al. in \cite{wang2015time} performed the bifurcation analysis around the constant steady state in the two-species competition Keller-Segel model and obtained the large chemotactic coefficient $\chi$ triggers \textit{Turing's instability} \cite{turing1990chemical}. 
Motivated by their results, we shall consider the ``far from" Turing's regime and assume chemotactic coefficients in (\ref{timedependent}) are large enough, then construct the multi-spots steady states asymptotically under the singular limits of $\chi_1\rightarrow +\infty$ and $\chi_2\rightarrow +\infty$  via the gluing method. 

To study the existence of non-constant steady states, we are concerned with the stationary problem of (\ref{timedependent}), which is
\begin{align}\label{ss}
\left\{\begin{array}{ll}
0=\Delta u_1-\chi_1\nabla\cdot(u_1\nabla v_1)+ \lambda_1 u_1(\bar u_1-u_1),&x\in\Omega,\\
0=\Delta u_2-\chi_2\nabla\cdot(u_2\nabla v_2)+ \lambda_2 u_2(\bar u_2-u_2),&x\in\Omega,\\
0=\Delta v_1-v_1+a_{11}u_1+a_{12}u_2,&x\in\Omega,\\
0=\Delta v_2-v_2+a_{21}u_1+a_{22}u_2,&x\in\Omega,\\
\partial_{\textbf{n}}u_1=\partial_{\textbf{n}}u_2=\partial_{\textbf{n}}v_1=\partial_{\textbf{n}}v_2=0,&x\in\partial\Omega,
\end{array}
\right.
\end{align}
 Before stating our main result, we introduce some notations and assumptions. For convenience, we set $\chi_1 = \chi$, $\chi_2 = \gamma \chi$ and $d = \frac{a_{21}}{a_{12}}\gamma^2$. Here and in the sequel, we impose the following assumptions on matrix  $A=(a_{ij})_{2\times 2}$: 
\begin{itemize}
    \item[(H1).] $(a_{ij})_{2 \times 2}$ is a real, irreducible and positive matrix;
\item[(H2).] $a_{11}a_{22}-a_{12}a_{21}\gamma^2\not=0;$
\item[(H3).] $(a_{ij})_{2\times 2}$ is a positive definite matrix.
\end{itemize}
Under the assumption (H1) and (H2), as shown in \cite{lin2010profile}, we have the existence of the entire solution denote $(\Gamma_{1,\mu_1}, \Gamma_{2,\mu_2})(y-\xi)$ with any $\xi\in\R^2$ to the following Liouville system 
\begin{equation}\label{gamma}
\left\{
\begin{aligned}
\Delta \Gamma_1 + b_{11}e^{\Gamma_1} + b_{12}e^{ \Gamma_2} = 0, \ \ \ \  &y\in\mathbb R^2,\\
\Delta \Gamma_2 + b_{21}e^{\Gamma_1} + b_{22}e^{ \Gamma_2} = 0, \ \ \ \  &y\in\mathbb R^2,
\end{aligned}
\right.
\end{equation}
where 
  \begin{equation}\label{bmatrix2025}
           (b_{ij})_{2 \times 2} =  
        \begin{pmatrix} 
             a_{11} & a_{21}\gamma  \\
             a_{21}\gamma   & a_{22}d 
             \end{pmatrix}
        \end{equation}
and one further defines 
  \begin{equation}\label{U0}
  (U_1, U_2) = (e^{\Gamma_{1,\mu_1}}, e^{\Gamma_2,\mu_2}).
  \end{equation}
 To capture the global behavior of $(v_1,v_2)$. we introduce the Neumann Green's function $G(x;\xi)$, which satisfies 
\begin{align}\label{Greenintro}
\left\{\begin{array}{ll}
\Delta_x G-G=- \delta_{\xi}(x),&x\in\Omega,\\
\partial_{\textbf{n}}G=0,&x\in\partial\Omega.
\end{array}
\right.
\end{align}
In addition, we define the regular part of Neumann Green's function $G(x;\xi)$ as $H(x;\xi)$, which solves
   \begin{align}\label{regular}
      \left\{ 
         \begin{array}{ll}
     - \Delta H + H  = - \frac{1}{2\pi} \log\frac{1}{|x - \xi|},&x\in\Omega, \\
        \frac{\partial H}{\partial \textbf{n}} = \frac{1}{2\pi} \frac{(x- \xi)\cdot \textbf{n}}{|x - \xi|^2},&x\in\partial\Omega.
        \end{array}
        \right.
   \end{align}
By employing the inner-outer gluing method, we extend the results shown in \cite{kong2022existence} and obtain the existence of multi-spots, which are
\begin{Theorem}\label{thm11}
Assume that $k$, $o$ are non-negative integers with $k+o\geq 1$ and $\lambda_j \bar{u}_j<\bar C_{j,\Omega}$, $j=1,2.$  Then for sufficiently large $\chi_1:=\chi:=\frac{1}{\va^2}$ with $\chi_2=\gamma\chi_1$ and given positive constant $\gamma$, there exists a solution $(u_{1,\chi},u_{2,\chi},v_{1,\chi}, v_{2,\chi})$ to (\ref{ss}) satisfying the following form:
\begin{align}\label{mainu}
 u_{j,\chi}(x)=\sum_{k =1}^mc_{jk}U_{jk}\bigg(\frac{x - \xi^{\va}_k}{\va};\mu_k\bigg) + o(1); 
\end{align}
 \begin{align}\label{mainv}
  {v_{j,\chi}}(x) =\va^2 \sum_{k =1}^{m}\bigg[-m_j\log\va+\Gamma_{j,\mu_{jk}}\bigg(\frac{x-\xi^{\va}_k}{\va}\bigg) +  \hat c_{jk} H(x, \xi^{\va}_k)-\mu_{jk}\bigg] + o(1),
 \end{align}
 where $m_j = \sum_{l=1}^2b_{jl}\sigma_l$, $H$ is defined as the regular part of Neumann Green's function satisfying (\ref{regular}), $U_{jk}$ and $\Gamma_{j,\mu_{jk}}$ are given by (\ref{U0}) and (\ref{gamma}), respectively.  Moreover, $\xi^{\va}_k\in \Omega$ and $\hat c_{jk}=2\pi m_j$ for $k\leq o$; $\xi^{\va}_k\in \partial\Omega$ and $\hat c_{jk}=\pi m_j$ for $o<k\leq m$, where $\sigma_{l}:=\frac{1}{2\pi}\int_{\mathbb R^N} e^{\Gamma_{l,\mu_{lk}}} dy$ and $b_{jl}$ are defined in \eqref{bmatrix2025}.  In addition, the $m$ -tuple $(\xi_1^{\va},\cdots,\xi_m^{\va})$ converges to a critical point of $\mathcal J_m$ as $\va\rightarrow 0,$ where $\mathcal J_m$ is defined by
 \begin{align}\label{jm}
 \mathcal J_m=\sum\limits_{k=1}^m \bar c_k^2H(x_k,x_k)+\sum_{k\not=l} \bar c_k\bar c_lG(x_k,x_l).
 \end{align}
Here $\bar c_k=2$ for $k\leq o$ and $\bar c_k=1$ for $o<k\leq m.$  In particular, the critical points of $\mathcal J_m$ are assumed to be non-degenerate and $\bar C_{j,\Omega}:=\sum\limits_{k=1}^m \hat c_{jk} C_{\Omega}$, where $C_{\Omega}$ is the positive lower bound of Green's function $G(x,y)$; $c_{jk}:=\frac{2\pi\sigma_j}{\int_{\mathbb R^2}e^{2 \Gamma_{j,\mu_{jk}}} \,dy }\bar u_j+O\Big(\frac{1}{\sqrt{\chi}}\Big)$ and $\mu_{jk}$ is determined by 
 \begin{equation*}
\mu_{jk} = \hat c_{jk} H(\xi^{\va}_k, \xi^{\va}_k) + \sum_{l \neq k}\hat c_{jl} G(\xi^{\va}_k, \xi^{\va}_l).
 \end{equation*} 
\end{Theorem}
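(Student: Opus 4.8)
The plan is to implement the inner--outer gluing scheme of \cite{kong2022existence} in the scaling $\va=\chi^{-1/2}$, adapted to the coupled two-species stationary system \eqref{ss}. First I would eliminate the chemical equations by writing $v_j=(-\Delta+1)^{-1}(a_{j1}u_1+a_{j2}u_2)$ with Neumann data, so that $v_j$ is slaved to $(u_1,u_2)$, and then exploit the divergence identity
\[
\Delta u_j-\chi_j\,\nabla\cdot(u_j\nabla v_j)=\nabla\cdot\bigl(e^{\chi_j v_j}\nabla(e^{-\chi_j v_j}u_j)\bigr).
\]
Since the logistic term is of lower order near a concentration point, this forces $e^{-\chi_j v_j}u_j$ to be nearly constant there, i.e.\ $u_j\approx c_{jk}e^{\chi_j v_j}$ near each $\xi_k$; feeding this back into the $v_j$-equation and rescaling $x=\xi_k+\va y$ (using that $\chi_j\va^2$ equals $1$ for $j=1$ and $\gamma$ for $j=2$) yields, after absorbing the amplitudes and symmetrizing the coefficients by constant shifts of $\Gamma_j$, precisely the Liouville system \eqref{gamma}--\eqref{bmatrix2025}; this is where the symmetric matrix $(b_{ij})$ and the role of (H3) appear. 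The global ansatz then superposes the local profiles $U_{jk}=e^{\Gamma_{j,\mu_{jk}}}$ over $k=1,\dots,m$, correcting their logarithmic tails by $\hat c_{jk}H(x;\xi_k)$ from \eqref{regular}; the weights $\hat c_{jk}=2\pi m_j$ (interior) and $\pi m_j$ (boundary) are dictated by the mass $2\pi\sigma_l=\int_{\R^2}e^{\Gamma_{l,\mu_{lk}}}$, halved at the boundary because reflecting across $\partial\Omega$ leaves only half of the profile inside $\Omega$.

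Second, I would set up the linear theory. Linearizing at the ansatz, the dominant operator decouples into $m$ copies of the linearized Liouville operator $L[\phi_1,\phi_2]=\bigl(\Delta\phi_1+b_{11}U_1\phi_1+b_{12}U_2\phi_2,\ \Delta\phi_2+b_{21}U_1\phi_1+b_{22}U_2\phi_2\bigr)$ in the fast variable, plus the outer operator $-\Delta+1$ coming from the chemical equations and the logistic linearization. The classification of entire solutions of \eqref{gamma} from \cite{lin2010profile}, together with positive-definiteness (H3) for the spectral gap, provides a bounded right inverse of $L$ in suitably weighted $L^\infty$-spaces modulo the finite-dimensional kernel generated by the translation modes $\partial_{y_1}\Gamma,\partial_{y_2}\Gamma$ and the dilation mode(s) associated with the scaling parameters; for a boundary spot one works in a half-plane with Neumann condition obtained by even reflection. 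I then recast \eqref{ss} as a coupled fixed-point problem: $m$ \emph{inner problems}, one per spot, posed on expanding balls $B_{\delta/\va}$ in the fast variable and solved by the Liouville right inverse, and one \emph{outer problem} on $\Omega$ for the slowly varying remainder; here the hypothesis $\lambda_j\bar u_j<\bar C_{j,\Omega}$ enters to guarantee that the relevant outer operator remains invertible and that the outer component of $u_j$ stays nonnegative. The inner tails feed the outer equation as small, slowly decaying sources, the outer solution feeds each inner equation as a smooth small coefficient, and a contraction argument in the product of the weighted spaces closes the system for every admissible configuration $(\xi_1,\dots,\xi_m)$ and every choice of $(\mu_{jk})$, leaving only finitely many scalar reduced equations.

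Third comes the reduced problem. The solvability conditions in the dilation directions are exactly the matching of the inner logarithmic growth rate of $v_j$ with the outer expansion $\va^2\sum_l\hat c_{jl}G(x,\xi_l)$, which forces $\mu_{jk}=\hat c_{jk}H(\xi^{\va}_k,\xi^{\va}_k)+\sum_{l\neq k}\hat c_{jl}G(\xi^{\va}_k,\xi^{\va}_l)$. Integrating the $j$-th cell equation over $\Omega$ (the Neumann conditions on $u_j$ and $v_j$ kill both flux terms) gives the mass balance $\bar u_j\int_\Omega u_j=\int_\Omega u_j^2$; substituting the ansatz and using $\int_{\R^2}e^{\Gamma_{j,\mu_{jk}}}\,dy=2\pi\sigma_j$ pins down the amplitude $c_{jk}=\frac{2\pi\sigma_j}{\int_{\R^2}e^{2\Gamma_{j,\mu_{jk}}}\,dy}\bar u_j+O(\chi^{-1/2})$. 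The remaining $2m$ conditions, a pair of translation modes for each spot, assemble into a map $\mathcal F_\va(\xi)=0$; the $j$-dependent prefactors $\pi m_j$ factor out of the spot--spot and spot--boundary interactions, leaving a leading part equal to $c_*\,\nabla_\xi\mathcal J_m(\xi)$ for a nonzero constant $c_*$, with $\mathcal J_m$ as in \eqref{jm} and charges $\bar c_k\in\{1,2\}$ inherited from $\hat c_{jk}$ — the standard fact that the first variation of a point-charge Green's-function energy governs the interaction. Since $\mathcal J_m$ is assumed to possess a nondegenerate critical point $\xi^*$, the implicit function theorem (equivalently, a Brouwer-degree count) yields a zero $\xi^\va\to\xi^*$; undoing $u_j\approx c_{jk}e^{\chi_j v_j}$ and the rescaling produces the expansions \eqref{mainu}--\eqref{mainv}.

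The step I expect to be the main obstacle is the linear theory for the coupled system together with the accompanying error bookkeeping. The two species are tied through the non-diagonal matrix $(b_{ij})$, so the Liouville linearization cannot simply be diagonalized; the chemotactic drift, once the exponential conjugation is only partially undone, contributes genuine first-order terms so that the full linearized operator is non-self-adjoint; and one must verify that the spot--spot and spot--boundary interactions are truly of lower order than the $O(\va^2|\log\va|)$ terms retained in the ansatz. Choosing the decay exponents of the weighted inner and outer norms compatibly, so that the unavoidable loss of weight in the Liouville right inverse does not destroy the contraction, is the delicate bookkeeping on which the whole scheme rests.
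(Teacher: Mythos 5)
Your proposal follows essentially the same route as the paper: the exponential ansatz $u_j\approx c_{jk}e^{\chi_j v_j}$ leading to the symmetric Liouville system \eqref{gamma}, the Green's-function correction with $\hat c_{jk}=2\pi m_j$ (interior) or $\pi m_j$ (boundary), the inner linear theory built on the Lin--Zhang classification of kernels (with even reflection in the half-plane for boundary spots), the outer theory under $\lambda_j\bar u_j<\bar C_{j,\Omega}$, and the gluing fixed point with amplitudes fixed by the mass balance and locations by the first-moment conditions reducing to $\nabla\mathcal J_m=0$ at a nondegenerate critical point. This matches the paper's Sections 2--5 in both structure and key lemmas, so no substantive deviation or gap to report.
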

Theorem \ref{thm11} demonstrates that when the intrinsic growth rate is small, (\ref{ss}) admits infinitely many interior and boundary multi-spots under the singular limits of large $\chi_1$ and $\chi_2$.  In Appendix \ref{appena}, we develop the formal construction of single interior spot, which supports our rigorous analysis for the proof of Theorem \ref{thm11}.   
Our main theoretical tool is the inner-outer gluing method, which has been used to study singularity formations within energy critical heat equations \cite{cortazar2020green}, harmonic map flows \cite{davila2020singularity}, Keller-Segel systems \cite{davila2024existence}, etc. successfully.  

\section{The Choice of Ansatz and Error Computations}\label{choiceansatz}

In this section, we shall discuss the choice of approximate solutions to (\ref{ss}).  In light of the $u_j$-equation with $j=1,2$, one regards logistic source terms as perturbations and obtains  
\begin{align}\label{relationuiandviintro}
u_j=C_je^{\chi_j v_j},
\end{align}
where constant $C_j>0$ will be determined later on.  Upon substituting (\ref{relationuiandviintro}) into the $v_j$-equation in (\ref{ss}), we define $\bar v_j=\chi_jv_j$ and arrive at
\begin{equation}\label{rescalebeforesingleeqintro}
\left\{
\begin{aligned}
0=\Delta \bar v_1-\bar v_1+a_{11}C_1\chi_1 e^{\bar v_1}+a_{12}C_2\chi_1e^{\bar v_2}, \ \ &x\in\Omega,\\
0=\Delta \bar v_2-\bar v_2+a_{21}C_1\chi_2e^{\bar v_1}+a_{22}C_2\chi_2e^{\bar v_2}, \ \  &x\in\Omega.
\end{aligned}
\right.
\end{equation}
Moreover, define $C_1\chi_1=\tilde \varepsilon^{m_1-2}$, $C_2\chi_2=d\tilde \varepsilon^{m_2-2}$ with $m_1$, $m_2$ and $d$ determined later on, then we have
\begin{equation}\label{rescalebeforesingleeqintro1}
\left\{
\begin{aligned}
0&=\Delta \bar v_1-\bar v_1+a_{11}\tilde \varepsilon^{m_1-2} e^{\bar v_1}+ \frac{d a_{12} }{\gamma}\tilde \varepsilon^{m_2-2}e^{\bar v_2}, \ \ \  &&x\in\Omega,\\
0&=\Delta \bar v_2-\bar v_2+  a_{21} \gamma\tilde \varepsilon^{m_1-2}e^{\bar v_1}+ a_{22} d \tilde \varepsilon^{m_2-2}e^{\bar v_2}, \ \ \ 
 &&x\in\Omega,
\end{aligned}
\right.
\end{equation}
where $\gamma:=\frac{\chi_2}{\chi_1}.$  
Without loss of generality, we assume that there is only one center $\xi\in \mathbb R^2$, then define $x-\xi=\tilde\varepsilon y$ and $\bar v_j=-m_j\log\tilde\varepsilon+\tilde V_{j}(y)$ with $m_j$ determined later on to obtain 
 from (\ref{rescalebeforesingleeqintro1}) that 
\begin{equation}\label{rescaledsingleeq}
\left\{
\begin{aligned}
0&=\Delta_y \tilde V_1-\tilde\varepsilon^2\tilde V_1+a_{11} e^{\tilde  V_1}+ a_{21}\gamma e^{\tilde  V_2}, \ \ \ 
  &&y\in\Omega_{\tilde\varepsilon},\\
0&=\Delta_y \tilde V_2-\tilde \varepsilon^2\tilde  V_2+  a_{21} \gamma e^{\tilde  V_1}+a_{22}d e^{\tilde  V_2}, \ \ \ 
    &&y\in\Omega_{\tilde\varepsilon},
\end{aligned}
\right.
\end{equation}
where $\Omega_{\tilde\varepsilon}:=(\Omega-\xi)/\tilde\varepsilon.$  By choosing $d$ such that the coefficient matrix in (\ref{rescaledsingleeq}) is symmetric, i.e.  
\begin{align}
\frac{a_{21}}{a_{12}} \gamma^2 = d,
\end{align}
we let $\tilde\varepsilon\rightarrow 0$ to get the limiting problem is
\begin{equation}\label{liouvillezhanglin}
\left\{
\begin{aligned}
0&=\Delta_y \Gamma_1 + b_{11}e^{\Gamma_1} + b_{12}e^{ \Gamma_2}, \ \  &&y\in\mathbb R^2,\\
0&=\Delta_y \Gamma_2 + b_{21}e^{\Gamma_1} + b_{22}e^{ \Gamma_2}, \ \  &&y\in\mathbb R^2,
\end{aligned}
\right.
\end{equation}
where $(\Gamma_1,\Gamma_2)$ is the leading approximation of $(\tilde V_1,\tilde V_2).$  Here
\begin{align}\label{bvaluenew2024104}
b_{11}:=a_{11},~b_{22}:=a_{22}d,~b_{21}=b_{12} = a_{21}\gamma.
\end{align}
Noting that
 $B=(b_{ij})_{2\times 2}$ is a real symmetric, irreducible, positive and invertible matrix,  one utilizes the results shown in \cite{lin2010profile} to obtain there exist a family of classical solutions $( \Gamma_{1,\tilde\mu_1},\Gamma_{2,\tilde\mu_2})$ to (\ref{liouvillezhanglin}) such that 
\begin{equation}\label{farfieldtildeVi0}
  \Gamma_{j,\tilde\mu_j}(y) = \Gamma_{j}(y, \tilde \mu_j) + \tilde\mu_j 
\end{equation}
 where $\tilde \mu_j$ are constants, $\hat{m} := \min\{m_1, m_2\} > 2$ and 
 \begin{equation*}
    \Gamma_{j}(y, \tilde\mu_j) = -m_j\log|y| + O(|y|^{2 - \hat{m}}) \ \ \ \   |y| \gg 1.
 \end{equation*}
   Denote $\sigma_j:=\frac{1}{2\pi}\int_{\mathbb R^2} e^{\Gamma_{j,\tilde\mu_j}} dy$, then we have $m_1$ and $m_2$ are determined by
\begin{equation}\label{m1m2decayrate}
m_1=\sigma_1b_{11}+\sigma_2b_{12}, \ \ \ \  m_2=\sigma_1b_{21}+\sigma_2b_{22}.
\end{equation}
Thus, for $j=1,2,$
\begin{align*}
\bar v_j=-m_j\log\tilde\varepsilon+\tilde V_{j}(y),\ \ \ \  \tilde V_{j}(y)=\Gamma_{j,\tilde\mu_j}(y) + o(1).
\end{align*}
We mention that by the blow-up analysis \cite{lin2010profile}, $m_j>2$ for all $j=1,2$.  Moreover, for $j=1,2$, the leading order term of $u_j$ is 
\begin{align*}
 u_j(x)=C_j\tilde\varepsilon^{-m_j}e^{\Gamma_{j,\tilde\mu_j}}(1 + o(1)).
\end{align*}
Noting that $C_1=\frac{\tilde\varepsilon^{m_1-2}}{\chi_1}$ and $C_2=d\frac{\tilde\varepsilon^{m_2-2}}{\chi_2}$, we have 
\begin{align}\label{Ui0leadingofspike}
u_1 =\frac{1}{\chi_1\tilde\varepsilon^2}e^{\Gamma_{1,\tilde \mu_1}}(1 + o(1)) := c_1 e^{\Gamma_{1,\tilde \mu_1}}   \ \ \text{and} \ \   u_2 =\frac{d}{\chi_2\tilde\varepsilon^2}e^{\Gamma_{2,\tilde\mu_2}}(1 + o(1)) := c_2e^{\Gamma_{2,\tilde\mu_2}}.
\end{align}
The leading part of $c_j$, $j=1,2$ is determined by the global balancing condition $\int_{\Omega} u_j(\bar u_j-u_j)dx=0$, which implies
\begin{align}\label{leadingci0}
c_{j} + o(1) =\frac{\int_{\mathbb R^2}e^{\Gamma_{j,\tilde\mu_j}}\,dy}{\int_{\mathbb R^2}e^{2 \Gamma_{j,\tilde\mu_j}} \,dy}\bar u_j=\frac{2\pi\sigma_j}{\int_{\mathbb R^2}e^{2 \Gamma_{j,\tilde\mu_j}} \,dy }\bar u_j.
\end{align}
In addition, thanks to Pohozaev identity shown in \cite{lin2010profile}, we find
\begin{align}\label{combine1sect2}
4(\sigma_1+\sigma_2)= b_{11}\sigma^2_1 + 2b_{12}\sigma_1\sigma_2+ b_{22}\sigma_2^2.
\end{align}
Combining (\ref{combine1sect2}) with (\ref{leadingci0}), one gets $(\sigma_1,\sigma_2)$ solves
\begin{align}\label{algebraicsystemforsigma}
\left\{
\begin{aligned}
\frac{\bar u_1}{\bar u_2}\int_{\mathbb R^2}e^{2 \Gamma_{2,\tilde\mu_2}}\,dy\sigma_1&=\frac{a_{12}}{a_{21}}\frac{\chi_1}{\chi_2}{\int_{\mathbb R^2}e^{2 \Gamma_{1,\tilde\mu_1}}\,dy}\sigma_2,\\
4(\sigma_1+\sigma_2) &=b_{11}\sigma_1^2+2b_{12}\sigma_1\sigma_2+b_{22}\sigma_2^2,
\end{aligned}
\right.
\end{align}
where $b_{ij}$, $i,j=1,2$ are given in \eqref{bvaluenew2024104}.  Noting that $(a_{ij})_{2\times 2}$ is a positive definite matrix by assumption (H3), one has the second constraint in \eqref{algebraicsystemforsigma} is an ellipse passing through $(0,0)$.  On the other hand, the first equality in (\ref{algebraicsystemforsigma}) can not be a closed curve since it cannot cross the coordinate axes by using the fact that all points must lie in the first quadrant. 
Therefore, the system \eqref{algebraicsystemforsigma} admits at least a positive solution $(\sigma_1, \sigma_2)$, where the schematic diagram is shown in Figure \ref{fig:ellipse}.
\begin{figure}
    \centering
    \includegraphics[width=0.9\linewidth,height=0.45\linewidth]{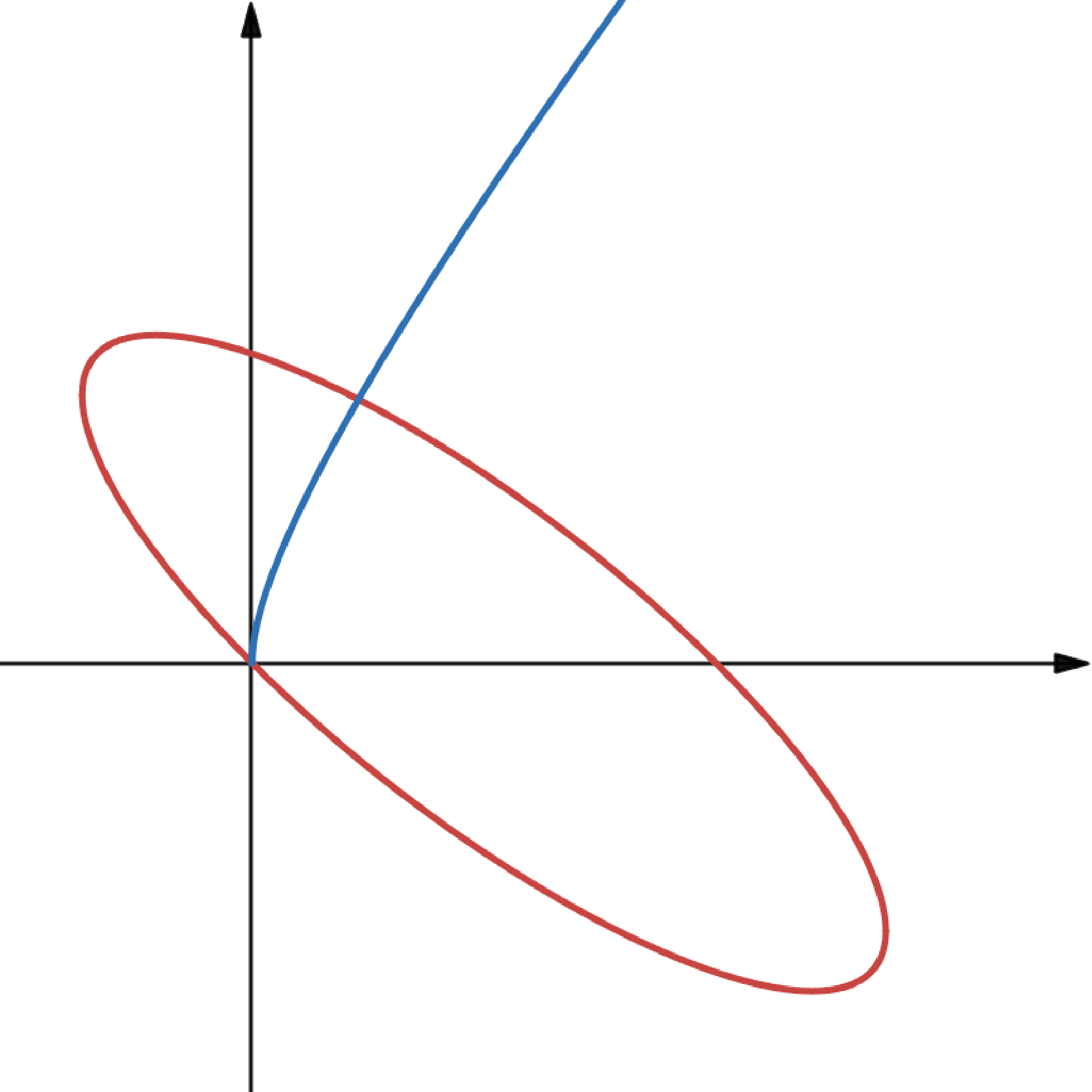}
    \caption{Schematic Diagram of (\ref{algebraicsystemforsigma})}
    \label{fig:ellipse}
\end{figure}

We further define the correction term as $H^{\tilde \va}_j(x;\xi)$, $j=1,2$, which satisfies 
\begin{align}
\left\{
\begin{aligned}
  \Delta_x H^{\tilde \va}_j-H^{\tilde \va}_j &= -m_j\log\tilde\va+\Gamma_{j,\tilde\mu_j}\Big(\frac{x-\xi}{\tilde\va}\Big),&x\in\Omega,\\
\partial_{\textbf{n}}H^{\tilde \va}_j &=-\partial_{\textbf{n}} \Gamma_{j,\tilde\mu_j},    &x \in\partial\Omega.
\end{aligned}
\right.
\end{align}
In summary, we set the rough approximation of the single spot in (\ref{ss}) as
\begin{equation}\label{uj0vj0before}
\left\{
  \begin{aligned}
u_{j} &= U_{j}(y, \tilde{\mu}_j):= c_je^{\Gamma_{j,\tilde\mu_j}}, \\
\bar v_{j}& =-m_j\log\tilde \varepsilon + \Gamma_{j} (y, \tilde{\mu}_j)+H^{\tilde \va}_j(x;\xi),
\end{aligned}
\right.
\end{equation}
where $y=\frac{x-\xi}{\tilde \varepsilon}$ and $\tilde \varepsilon:=\frac{1}{\sqrt{c_1\chi_1}}\ll 1$.  Here $c_j$ is determined by (\ref{leadingci0}), $(\sigma_1,\sigma_2)$ is the solution to (\ref{algebraicsystemforsigma}) and $(\Gamma_{1,\tilde\mu_1},\Gamma_{2,\tilde\mu_2})$ solves Liouville system (\ref{liouvillezhanglin}). 

It remains to determine the parameter $\tilde \mu_j$, $j=1,2.$ In light of (\ref{Greenintro}), similarly as shown in Lemma 2.1 of \cite{del2006}, we have for any $\alpha\in(0,1)$,
 \begin{align}\label{2162025117}
 H_{j}^{\tilde \va}(x;\xi)=\hat c_{j}H(x;\xi)- \tilde \mu_{j} + O(\tilde\va^{\alpha}),
 \end{align}
 where $\hat c_{j}=2\pi m_j$ if $\xi\in\Omega$ and $\hat c_{j}=\pi m_j$ if $\xi\in\partial\Omega.$  To guarantee the error is small, we choose
 \begin{align}\label{tildemujvaH}
\tilde \mu_j =\hat c_jH(\xi,\xi),
\end{align}
where $\hat c_j=2\pi m_j$ if $\xi\in\Omega$ and $\hat c_j=\pi m_j$ if $\xi\in\partial\Omega.$ 
 We remark that $\tilde\varepsilon$ depends on $c_j$ and $c_j$ is determined globally, which may cause the ambiguity for our subsequent analysis.  To solve this issue, we define $\varepsilon=\frac{1}{\sqrt{\chi}}\ll 1$ and rewrite (\ref{uj0vj0before}) as 
\begin{equation}\label{uj0vj0after}
\left\{
\begin{aligned}
u_{j} &= U_{j}(y; \mu_j), \\
\bar v_{j}& =-m_j\log\varepsilon + \Gamma_{j} (y, \mu_j)+ H^{\va}_j(x;\xi),
\end{aligned}
\right.
\end{equation}
where $y:=\frac{x-\xi}{\va}$, $ \mu_j=\hat c_j H  (\xi,\xi)$ and
$H^{\va}_j$ solves
\begin{equation}\label{Hvajsolves2024}
\left\{
\begin{aligned}
\Delta_x H^{\va}_j-H^{\va}_j &= -m_j\log\va+\Gamma_{j, \mu_j}\Big(\frac{x-\xi}{\va}\Big),  & x \in \Omega,\\
\partial_{\textbf{n}}H^{ \va}_j &=-\partial_{\textbf{n}} \Gamma_{j, \mu_j},  &x \in \partial\Omega.
\end{aligned}
\right.
\end{equation}

Proceeding with the similar argument shown above, we set the first approximation of multi-spots with $o$, $0\leq o\leq m,$ interior bubbles to (\ref{ss}) as 
  \begin{equation}\label{generate1}
\left\{\begin{aligned}
u_j &=  \sum_{k = 1}^m  U_{jk}(y-\xi_k', \mu_{jk}):= \sum_{k=1}^mc_{jk}e^{\Gamma_{j,\mu_{jk}}(y-\xi'_k)}, \\
\bar v_{j}& =\sum_{k=1}^m \Big(-m_j\log\varepsilon + \Gamma_{jk} (y-\xi_k', \mu_{jk})+H^{\va}_{jk}(x;\xi_k) \Big),
\end{aligned}
\right.
\end{equation} 
where $m\geq 1$, $\va:=\frac{1}{\sqrt{\chi}}$, $y:=\frac{x}{\va}$, $\xi_k':=\frac{\xi_k}{\va}$. In particular, $\mu_{jk}$, $j=1,2$ are determined by
\begin{equation}
      \mu_{jk} =  \hat c_{jk}H(\xi_k; \xi_k) + \sum_{l \neq k}\hat c_{jl} G(\xi_k;\xi_l),
   \end{equation}
where $G(x;\xi)$ is given by (\ref{Greenintro}) and $\hat c_{jk}=2\pi m_j$ if $k\leq o;$ $\hat c_{jk}=\pi m_j$ if $o<k\leq m$.  For the simplicity of our notation, we define 
 \begin{equation}
   U_{jk}(y - \xi'_k) = U_{jk}(y - \xi'_k, \mu_{jk})  \ \ \text{and} \ \     \Gamma_{jk}(y - \xi'_k)  =  \Gamma_{jk}(y - \xi'_k, \mu_{jk}),  . 
\end{equation}
then rewrite the first approximation solution as
 \begin{equation}\label{20252p20new}
\left\{\begin{aligned}
u_j &=  \sum_{k = 1}^m U_{jk}(y-\xi_k'), \\
\bar v_{j}& =\sum_{k=1}^m \Big(-m_j\log\varepsilon + \Gamma_{jk} (y-\xi_k') + H^{\va}_{jk}(x;\xi_k) \Big),
\end{aligned}
\right.
\end{equation} 
where $H_{jk}^{\va}$ solves (\ref{Hvajsolves2024}) with $\xi$ replaced by $\xi_k.$

Next, we compute the error generated by (\ref{20252p20new}).  
Noting that (\ref{ss}) can be reduced as the following two-coupled equations:
   \begin{equation}\label{224expression2024}
   \left\{
   \begin{aligned}
    S_1(u_1,u_2) &= \Delta u_1 + \nabla\cdot((\Delta_x-1)^{-1}(a_{11}u_1+a_{12}u_2)) + \lambda_1 u_1 (\bar u_1 - u_1)=0,\\
     S_2(u_1,u_2) &= \Delta u_2 + \nabla\cdot((\Delta_x-1)^{-1}(a_{21}u_1+a_{22}u_2)) + \lambda_2 u_2 (\bar u_2 - u_2)=0.  
        \end{aligned}
        \right.
      \end{equation}
Then in the region $|x - \xi_k| < \delta \varepsilon$ with constant $\delta>0$, we calculate to get
   \begin{equation}
     \left \{ 
       \begin{aligned}
          &\nabla_x u_{j} = \frac{1}{\va}\nabla U_{jk}(y-\xi_k') + \sum_{l\not \neq k} \frac{1}{\va}\nabla U_{jl}(y-\xi_l')+o(1),  \\
          &\nabla_x \bar v_{j}  = \frac{1}{\va} \nabla \Gamma_{jk}(y-\xi_k') + \nabla \tilde H^{\va}_{jk}(x, \xi_k)+o(1),
       \end{aligned}
       \right.
   \end{equation}
  where 
  \begin{equation}\label{eq2.26}
     \tilde H^{\va}_{jk}(x, \xi_k) = H_{jk}^\va(x; \xi_k) + \sum_{l \neq k}\Big(\Gamma_{jk}(y - \xi_l) + H^{\va}_{jl}(x; \xi_l) \Big). 
  \end{equation}
Moreover, one has 
  \begin{equation}
     \left \{ 
       \begin{aligned}
          &\Delta_x u_{j} = \frac{1}{\va^2}\Delta U_{jk}(y-\xi_k') + \sum_{l \neq k} \frac{1}{\va^2}\Delta  U_{jl}(y-\xi'_l)+o(1),  \\
          &\Delta_x \bar v_{j}  = \frac{1}{\va^2} \Delta \Gamma_{jk}(y-\xi_k') + \Delta \tilde H^{\va}_{jk}(x, \xi_k)+o(1),
       \end{aligned}
       \right.
   \end{equation}
  \begin{equation}
  \begin{split}
      \nabla u_{j} \cdot \nabla \bar v_{j} =& \frac{1}{\va^2} \nabla U_{jk}(y-\xi_k')\cdot \nabla \Gamma_{jk}(y - \xi_k')  + \frac{1}{\va}\nabla U_{jk}(y-\xi_k') \cdot \nabla \tilde{H}^\va_{jk}(x, \xi_k)  \\
           & + \frac{1}{\va^2}\nabla \Gamma_{jk}(y-\xi_k') \cdot \sum_{l \neq k} \nabla U_{jl}(y-\xi'_l) 
             + \frac{1}{\va}\sum_{l \neq k}\nabla U_{jl}(y-\xi_l') \cdot \nabla \tilde{H}^\va_{jk}(x, \xi_k),
             \end{split}
  \end{equation}
and
\begin{equation}
  \begin{split}
    u_{j} \Delta \bar v_{j} =& \frac{1}{\va^2}U_{jk}(y-\xi_k') \cdot \Delta \Gamma_{jk}(y - \xi_k') 
        + \frac{1}{\va^2}\sum_{l\neq k}U_{jl}(y-\xi'_l) \Delta \Gamma_{jk}(y-\xi_k')  \\
        & + U_{jk}(y-\xi_k')\cdot \Delta \tilde{H}^\va_{jk}(x, \xi_k) 
            + \sum_{l \neq k} U_{jl}(y-\xi_l') \cdot \Delta \tilde{H}^\va_{jk}(x, \xi_k).
   \end{split}
\end{equation}
 Then, one finds for $j=1,2$,
 \begin{equation}\label{20251114eq1}
  \begin{split}
    S_j(\textbf{u}) =& \frac{1}{\va^2}[ \Delta U_{jk}(y-\xi_k') - \nabla U_{jk}(y-\xi_k')\cdot \nabla \Gamma_{jk}(y - \xi'_k) - U_{jk}(y-\xi_k') \cdot \Delta \Gamma_{jk}(y-\xi_k')  ] \\
     & - \Big[\frac{1}{\va}\sum_{l \neq k} \nabla U_{jl}(y-\xi_l') \cdot \nabla \tilde{H}^\va_{jk}(x, \xi_k) +  \sum_{l \neq k} U_{jl}(y-\xi_l') \cdot \Delta \tilde{H}^\va_{jk}(x, \xi_k) \Big]  \\
      & - \Big[\frac{1}{\va}\nabla U_{jk}(y-\xi_k') \cdot \nabla \tilde{H}^\va_{jk}(x, \xi_k) + \frac{1}{\va^2}\nabla \Gamma_{jk}(y - \xi'_k) \cdot \sum_{l \neq k}\nabla U_{jl}(y-\xi'_{l})  \\
      &  - \frac{1}{\va^2}\sum_{l\neq k}U_{jl}(y-\xi'_l) \Delta \Gamma_{j, {\mu}_{jk}}(y-\xi_k') + U_{jk}(y-\xi_k')\cdot \Delta \tilde{H}^{\va}_{jk}(x, \xi_k) \Big]  \\
       & + \sum_{l \neq k} \frac{1}{\va^2}\Delta  U_{jl}(y-\xi'_l) + \lambda_j U_{jk}(y-\xi'_k)\Big(\bar u - U_{jk}(y-\xi_k') \Big) \\ 
         & + \lambda_j\sum_{l \neq k} U_{jl}(y-\xi_l')\Big(\bar u_j -U_{jk}(y-\xi_k') -\sum_{l \neq k} U_{jl}(y-\xi'_l) \Big) \\
          & +\lambda_j U_{jk}(y-\xi_k') \sum_{l \neq k} U_{jl}(y-\xi_l')+o(1),
  \end{split}
  \end{equation}
  where $\textbf{u}=(u_1,u_2)^T.$  Similarly as shown in \cite{kong2022existence}, we observe from (\ref{20251114eq1}) that the main contribution term in the error is
\begin{align}\label{barIj12025117}
\bar I_{j1}:=\frac{1}{\va}\nabla U_{jk}(y-\xi_k') \cdot \nabla \tilde{H}^\va_{jk}(x, \xi_k)+U_{jk}(y-\xi_k')\cdot \Delta \tilde{H}^{\va}_{jk}(x, \xi_k),
\end{align}
and $\bar I_{j1}=O\big(\frac{1}{\va}\big)$. 
Then it follows that the leading error of $\va^2 S_j(\textbf{u})$ is $O(\va)$.  To further study $\bar I_{j1}$ given in (\ref{barIj12025117}), we use a single interior spot as an example to illustrate our idea.  In fact, we expand
\begin{align}\label{nablatildeHjva2025117}
\nabla \tilde H_{j}^{\va}(x;\xi)=\hat c_{j}\nabla H(\xi;\xi)+\hat c_{j}\nabla^2H(\xi;\xi)(x-\xi)+O(\va^{\alpha}),
\end{align}
where \eqref{2162025117} has been used.  Since $x-\xi=\va y,$ we have that $\va\nabla\cdot (U_{j}\nabla H(\xi))$ dominates.  To balance this, we adjust location $\xi$ such that $\nabla H(\xi_0)=0$, which implies the principal term $\xi_0$ of $\xi$ is a critical point of $\mathcal J_m$ defined by (\ref{jm}) with $m=1$.  Similarly, for the multi-spots, we adjust $(\xi_1,\cdots,\xi_m)$ such that the leading term is governed by the critical point of (\ref{jm}).

To eliminate the error generated from the logistic growth, we  
define the second approximation of $(u_j,\bar v_j)$ as
  \begin{equation}\label{secondansatz}
\left\{
\begin{aligned}
u_{j} &=  \sum_{k = 1}^m  \Big(U_{jk}(y-\xi_k') + \va^2 \phi_{jk}(y - \xi'_k) \Big), \\
\bar v_{j}& =\sum_{k=1}^m \Big(-m_j\log\va + \Gamma_{jk} (y-\xi_k')+H^{\va}_{jk}(x;\xi_k) \Big) + \va^2\Big( \psi_{jk}(y - \xi'_k) + \mathcal{H}^{\va}_{jk}(x, \xi_k)\Big),
\end{aligned}
\right.
\end{equation} 
where $(\phi_{jk},\psi_{jk})$, $j=1,2$, $k=1,\cdots,m$ are the next order term and $\mathcal{H}^{\va}_{jk}$ are correction terms of $\psi_{jk}$ satisfying 
\begin{equation}
\left\{
 \begin{aligned}
   & \Delta_x  {\mathcal{H}}^{\va}_{jk}(x, \xi_k) -  {\mathcal{H}}^{\va}_{jk}(x, \xi_k)  = -  \psi_{jk},&x\in\Omega,\\
   & \partial_{\textbf{n}}{\mathcal H}_{jk}^{\va}=-\p_{\textbf{n}} \psi_{jk},&x\in\p \Omega.
\end{aligned}
  \right.
 \end{equation}
For simplicity of notation, we drop ``k" and use $ \psi_j$, $\phi_j$ and $U_{j}$ to replace $ \psi_{jk}$, $\phi_{jk}$ and $U_{jk}(y-\xi_k')$.  To balance the error generated by logistic source, we choose $(\phi_1, \phi_2, \psi_1, \psi_2)$ as a solution to
\begin{equation}\label{nextorderansatz}
  \left\{ 
\begin{aligned}
   &\nabla (U_{1}\nabla g_1)+ \lambda_1 U_{1}(\bar u_1 - U_{1}) = 0,  \\
    &\nabla (U_{2} \nabla g_2) + \lambda_2 U_{2}(\bar u_2 - U_{2}) = 0,  \\
   & g_1= \frac{\phi_1}{U_{1}} -  \psi_1; \ \   g_2 = \frac{\phi_2}{U_{2}} -  \psi_2,  \\
    &\Delta \psi_1 + a_{11} \chi_1\phi_1 + a_{12}\chi_1 \phi_2 = 0,\\
    &\Delta \psi_2 + a_{21} \chi_2\phi_1 + a_{22} \chi_2 \phi_2 = 0.
\end{aligned}
\right.
\end{equation}
Next, we solve (\ref{nextorderansatz}) and first define $h_j:=-\lambda_jU_j(\bar u_j-U_j)$. Then by applying the first integral method, one gets
\begin{align}
g_{j}=\int_{r}^{\infty}\frac{1}{\rho U_{j}(\rho)}\int_0^{\rho}  h_{j}(s)s\,ds\,d\rho,
\end{align}
where we have used
$$\int_{\R^2} h_j\,dy=0.$$
Thus, one has for $j=1,2$, 
\begin{align}
 g_j\sim \langle r\rangle^{2-\delta_j},
\end{align}
where $\delta_j>0$ is small enough.  In light of $\tilde g_j:=U_jg_j\sim \langle r\rangle^{-(m_j-2+\delta_j)}$, we find from the variation-of-parameters formula that there exists $(\psi_1, \psi_2)$ such that 
\begin{align}
 \psi_j=O(\log |y|),\text{ for }|y|\gg 1.
\end{align}
Invoking $\phi_j=U_{j}g_j+U_j \psi_j$, we further obtain that there exists $\phi_j\sim \langle r\rangle^{-(m_j-2+\delta_j)}$, where $m_j>2$ and $\delta_j>0$ is small enough.

By using $(u_{j1},\bar v_{j1})$ defined in (\ref{secondansatz}) as the ansatz, we shall perform the error computation and establish the inner and outer systems satisfied by the remainder term $(\varphi_j,w_j)$.  To this end, we write the solution $(u_j,\bar v_j)$ to (\ref{ss}) as 
\begin{equation}\label{collect1ansatz}
\left\{
  \begin{aligned}
u_j&=\sum_{k =1}^m  \bigg[U_{jk}\bigg(\frac{x-\xi_k}{\varepsilon}\bigg)+ \varepsilon^2\phi_{jk}\bigg(\frac{x-\xi_k}{\varepsilon}\bigg)\bigg] + \varphi_j\bigg(\frac{x}{\va}\bigg), 
 \\
  \bar v_j &=\sum_{k =1}^m \bigg[ \bigg(-m_j\log\varepsilon +  \Gamma_{jk}\bigg(\frac{x - \xi_k}{\varepsilon}\bigg)+ H^{\va}_{jk}(x;\xi_{k}) \bigg)+ \varepsilon^2\bigg(\psi_{jk}\bigg(\frac{x - \xi_k}{\varepsilon}\bigg) +  \mathcal{H}^{\va}_{jk}(x, \xi_k)\bigg) \bigg]+  w_j\bigg(\frac{x}{\va}\bigg),
\end{aligned}
\right.
\end{equation}
 where and in the sequel we rewrite $\xi_k^{\va}$ as $\xi_k$ for the simplicity of notations.  Then, we compute the error term to get 
 
 \begin{equation}
   \nabla u_j = \sum_{k =1}^m \Big[ \frac{1}{\varepsilon}\nabla U_{jk}(y - \xi'_k) + \varepsilon \nabla \phi_{jk}(y - \xi'_k)\Big] + \frac{1}{\varepsilon}\nabla_y \varphi_j(y),
 \end{equation}
 \begin{equation}
     \nabla \bar v_j = \sum_{k =1}^m \Big[\frac{1}{\varepsilon} \nabla \Gamma_{jk}(y - \xi'_k) +  \nabla H^{\varepsilon}_{jk}(\varepsilon y, \xi'_k) + \varepsilon \nabla \psi_{jk}(y - \xi'_k) + \varepsilon^2\nabla \mathcal{H}^{\varepsilon}_{jk}(\varepsilon y, \xi'_k) \Big] + \frac{1}{\varepsilon}\nabla_y w_j(y),
 \end{equation}

  \begin{equation}
    \Delta u_j = \sum_{k =1}^m \Big[\frac{1}{\varepsilon^2}\Delta U_{jk}(y - \xi'_k) + \Delta \phi_{jk}(y - \xi'_k) \Big] + \frac{1}{\varepsilon^2}\Delta_y \varphi_j(y),
  \end{equation}
 and
  \begin{equation}\label{collect2ansatz}
    \Delta \bar v_j = \sum_{k =1}^m \Big[\frac{1}{\varepsilon^2} \Delta \Gamma_{jk}(y - \xi'_k) + \Delta H^{\varepsilon}_{jk}(\varepsilon y, \xi'_k) + \Delta \psi_{jk}(y - \xi'_k) + \varepsilon^2 \Delta \mathcal{H}^{\varepsilon}_{jk}(\varepsilon y, \xi'_k) \Big] + \frac{1}{\varepsilon^2}\Delta_y w_j(y). 
  \end{equation}
Upon substituting (\ref{collect1ansatz})--(\ref{collect2ansatz}) into (\ref{224expression2024}), one finds
  \begin{equation}
  \begin{split}
      0  = \Delta u_j - \nabla u_j \cdot \nabla \bar v_j - u_j \cdot \Delta \bar v_j + \lambda_j u_j (\bar u_j  - u_j) = L_j[\varphi_1, \varphi_2] + \sum_{l=1}^7 I_{jl},
     \end{split}
  \end{equation}
where 
   \begin{equation}\label{2442025116}
    L_j[\varphi_1, \varphi_2] = -\Delta \varphi_j + \nabla \cdot(P_j \nabla w_j) + \nabla \cdot (\varphi_j \nabla Q_j),
   \end{equation}

     \begin{equation}
   P_j = \sum_{k=1}^m U_{jk}(y - \xi'_k) \ \ \text{and} \ \  Q_j = \sum_{k= 1}^m \Gamma_{jk}(y - \xi'_k) 
     \end{equation}
and $I_{jl}$, $l=1,\cdots,7$ are defined as
  
\begin{equation}
   I_{j1} = - \frac{1}{\varepsilon^2} \sum_{k =1}^m \sum_{l \neq k}  U_{jk}(y - \xi'_k)\Delta \Gamma_{jl}(y - \xi'_l),
\end{equation}

\begin{equation} 
\begin{split}
 I_{j2}  =& -\frac{1}{\varepsilon^2} \sum_{k = 1}^m \sum_{l \neq k} \nabla U_{jk}(y - \xi'_k) \cdot \nabla \Big(\Gamma_{jl}(y - \xi'_l) +\va
       H^{\varepsilon}_{jk}(\varepsilon y,\xi_k) \Big)\\
     &  - \frac{1}{\varepsilon}\sum_{k =1}^m   \nabla U_{jk}(y - \xi'_k) \cdot \nabla H^{\varepsilon}_{jk}(\varepsilon y, \xi_k) + \sum_{k =1}^m \sum_{l=1}^m U_{jk}(y - \xi'_k) \cdot \Delta H^{\varepsilon}_{jl}(\varepsilon y, \xi_l),
 \end{split}
\end{equation}

\begin{equation}
\begin{split}
   I_{j3} = - \sum_{k =1}^m \sum_{l \neq k} & \Big( \nabla U_{jk}(y - \xi'_k) \cdot \nabla \psi_{jl}(y - \xi'_l)   + \nabla \phi_{jk}(y - \xi'_k) \cdot \nabla \Gamma_{jl}(y - \xi'_l)  \\
      & + U_{jk}(y - \xi'_k) \cdot\Delta \psi_{jl}(y - \xi'_l) + \phi_{jk}(y - \xi'_k) \cdot \Delta \Gamma_{jl}(y - \xi'_l)\Big),
   \end{split}
\end{equation}

\begin{equation}
\begin{split}
   I_{j4} =- \sum_{k =1}^m \sum_{l =1}^m &\Big(\varepsilon \nabla U_{jk}(y - \xi'_k) \cdot \nabla \mathcal{H}^{\va}_{jl}(\varepsilon y, \xi_k) +  
      \varepsilon \nabla \phi_{jk}(y - \xi'_k) \cdot \nabla H^{\varepsilon}_{jl}(\varepsilon y, \xi_l)   \\
      & + \varepsilon^2  U_{jk}(y - \xi'_k) \cdot \Delta \mathcal{H}^{\va}_{jl}(\varepsilon y, \xi_l) + \varepsilon^2  \phi_{jk}(y - \xi'_k) \cdot \Delta H^{\varepsilon}_{jl}(\varepsilon y, \xi_l)\Big),
       \end{split}
\end{equation}

\begin{equation}
\begin{split}
   I_{j5} = \sum_{k =1}^m \sum_{l =1}^m &\Big(\varepsilon^2 \nabla \phi_{jk}(y - \xi'_k) \cdot (\nabla  \psi_{jl}(y - \xi'_l) + 
   \va \nabla \mathcal{H}^{\va}_{jl}(\varepsilon y, \xi_l))    \\
      & + \varepsilon^2 \phi_{jk}( y - \xi'_k) \cdot \big(\Delta \psi_{jl}(y - \xi'_l) + \varepsilon^2  \Delta \mathcal{H}^{\varepsilon}_{jl}(\varepsilon y, \xi_l)\big)\Big), 
       \end{split}
\end{equation}

\begin{equation}
\begin{split}
  I_{j6} = &- \frac{1}{\va}\nabla \varphi_j \cdot \sum_{k =1}^m\nabla H^{\va}_{jk}(\va y, \xi_k) - \varphi_j \sum_{k =1}^m \Delta H^{\va}_{jk}(\varepsilon y, \xi_k)   \\
     & -\frac{1}{\varepsilon}\nabla \varphi_j \cdot \Big(\varepsilon \sum_{k =1}^m \nabla \psi_{jk}(y - \xi'_k) + \varepsilon^2 \nabla \mathcal{H}^{\varepsilon}_{jk}(\varepsilon y, \xi_k) \Big) \\
      & - \varphi_j \Big(\sum_{k =1}^m \Delta  \psi_{jk}(y - \xi'_k) + \varepsilon^2 \Delta \mathcal{H}^{\varepsilon}_{jk}(\varepsilon y, \xi_k) \Big) \\
      & - \nabla w_j \cdot \sum_{k=1}^m \nabla \phi_{jk}(y - \xi'_k)  - \Delta w_j \cdot \sum_{k =1}^m\phi_{jk} (y - \xi'_k)  \\
      & - \frac{1}{\va^2}(\nabla \varphi_j \cdot \nabla w_j + \varphi_j \Delta w_j)
  \end{split}
\end{equation}
 and
\begin{equation}
\begin{split}
    I_{j7}&= \lambda_j \bigg( \varepsilon^2\sum_{k =1}^m\phi_{jk}\bigg(\frac{x-\xi_k}{\varepsilon}\bigg) + \varphi_j\bigg)\bigg(\bar u_j-\sum_{k =1}^m\bigg[U_{jk}\bigg(\frac{x-\xi_k}{\varepsilon}\bigg)+\varepsilon^2\phi_{jk}\bigg(\frac{x-\xi_k}{\varepsilon}\bigg)\bigg] - \varphi_j\bigg)\\
    &- \lambda_j \sum_{k =1}^m U_{jk}\bigg(\frac{x-\xi_k}{\varepsilon}\bigg) \bigg(\sum_{k =1}^m\varepsilon^2\phi_{jk}\bigg(\frac{x-\xi_k}{\varepsilon}\bigg)+ \varphi_j\bigg)+\lambda_j\sum_{k=1}^m U_{jk}\bigg(\frac{x-\xi_k}{\va}\bigg)  \sum_{l \not= k}^m\bigg(\bar u_j-U_{jl}\bigg(\frac{x-\xi_l}{\varepsilon}\bigg)\bigg).
    \end{split}
    \end{equation}
We summarize the computation above to obtain $ \boldsymbol{\varphi}:=(\varphi_1,\varphi_2)$ solves
  \begin{equation}\label{inn-out-before}
   \left\{
  \begin{aligned}
      &L_j[\boldsymbol{\varphi}]= \va^2\sum_{l =1}^7 I_{jl}(\boldsymbol{\varphi}, \mathbf{P}), &&\text{~in~}\Omega_{\va},  \\
       & 0 = \Delta w_1 + a_{11} \chi_1 \varphi_1 + a_{12}\chi_1\varphi_2,   &&\text{~in~}\Omega_{\va}, \\ 
        & 0 = \Delta w_2 + a_{21} \chi_2\varphi_1 + a_{22}\chi_2\varphi_2,   &&\text{~in~}\Omega_{\va}, \\
         &\frac{\partial w_1}{\partial \boldsymbol{\nu}}=\frac{\partial w_2}{\partial \boldsymbol{\nu}} = \frac{\partial \varphi_1}{\partial \boldsymbol{\nu}}=\frac{\partial \varphi_2}{\partial \boldsymbol{\nu}},  &&\text{~on~}\partial\Omega_{\va},
         \end{aligned}
         \right.
        \end{equation}
where $j=1,2$ and
  \begin{equation}
   \mathbf{P} = (c_{11}, \cdots, c_{1m}, c_{21}, \cdots, c_{2m}, \xi_1, \cdots, \xi_m).
  \end{equation}
  
The subsequent sections are devoted to the solvability of (\ref{inn-out-before}) and the existence of solution $(\varphi_1,\varphi_2,w_1,w_2)$.  To this end, we decompose the domain $\Omega$ as inner and outer regions.  Correspondingly, the solution $(\varphi_1,\varphi_2)$ is decomposed as the combination of inner and outer solutions.  Firstly, we establish the linear theory in the inner region, which is shown in Section \ref{sect3}.


\section{Inner Linear Theory}\label{sect3} 
In this section, we consider the inner region $| x-\xi_k|<\delta$ with constant $\delta>0$, where $\xi_k$ denotes the location of the $k$-th spot.  Noting that $\va$ is small enough, one has the inner region $\{y\in \mathbb R^2:\vert y\vert<\frac{\delta}{\va}\}$ approximates the whole space $\mathbb R^2$.  Then, we define the stretched variable $y=\frac{x-\xi_k}{\va}$ and the $k$-th inner operator $L^{\text{inn}}_{k}[\varphi_1, \varphi_2] := (L^{\text{inn}}_{1k}[\varphi_1], L^{\text{inn}}_{2k}[\varphi_2])^T$ as
 \begin{equation}\label{innerlphi}
 L^{\text{inn}}_{jk}[\varphi_j] := -\Delta_y \varphi_j + \nabla \cdot(U_{j}\nabla_y w_j) + \nabla \cdot (\varphi_j\nabla_y \log U_{j}),~y\in \mathbb R^2,
 \end{equation}
where $w_j = (- \Delta_y)^{-1}(b_{j1}\varphi_1+b_{j2}\varphi_2)$ and $j=1,2.$  Here and in the subsequent analysis, we drop ``$k$" and $c_{jk}$ in $U_{jk}$ given by (\ref{generate1}) since in each inner region, the form of $U_{jk}$ is the same and $c_{jk}$ is a constant.  We remark that $c_{jk}\varphi_j$ here is equal to the original one, where $c_{jk}$ is incorporated in (\ref{innerlphi}).  Similarly as shown in \cite{kong2022existence}, when the location of the spot is in the interior of domain $\Omega,$ the inner problem is formulated as 
  \begin{equation}\label{eq2.1}
    L^{\text{inn}}_{jk}[\varphi_j] + h_j=0 \ \ \text{in} \ \ \R^2,
\end{equation}
where $\textbf{h}:=(h_1,h_2)^T$ denotes the error.  We introduce the intermediate variables $g_1$ and $g_2$ to simplify the inner problem (\ref{eq2.1}) as 
\begin{equation}\label{innerproblem}
\left\{
\begin{aligned}
&\nabla_y\cdot(U_{1}\nabla g_1)=h_1, \ \ \ \  g_1=\frac{\varphi_1}{U_{1}}- w_1,     &&y\in\mathbb R^2,\\
&\nabla_y\cdot(U_{2}\nabla g_2)=h_2, \ \ \ \  g_2=\frac{\varphi_2}{U_{2}}- w_2,     &&y\in\mathbb R^2,\\
&\Delta_y w_1+b_{11}U_{1} w_1+b_{11}U_{1}g_1+b_{12}U_{2}w_2+b_{12}U_{2}g_2=0,       &&y\in\mathbb R^2\\
&\Delta_y w_2+b_{21}U_{1} w_1+b_{21}U_{1}g_1+b_{22}U_{2}w_2+b_{22}U_{2}g_2=0,       &&y\in\mathbb R^2,
\end{aligned}
\right.
\end{equation}
where $b_{ij}$, $i=1,2$, $j=1,2$ are given in (\ref{bvaluenew2024104}).  System (\ref{innerproblem}) can be regarded as the coupling of divergence form equations and the linearized Liouville systems.  We first consider the non-degeneracy of operator $\nabla_y\cdot(U_{j}\nabla g_j)$ to $U_j$ and obtain 
\begin{Lemma}\label{nondegeneracy1}
The bounded solution space to the following problem
\begin{equation}\label{limitingoperator}
\left\{
\begin{aligned}
 &\nabla\cdot\Big(U_{j}\nabla \Big(\frac{\tilde g_j}{U_{j}}\Big)\Big)=0, \ \  &y\in \mathbb R^2,\\
&\tilde g_j\in H^2_0(\mathbb R^2), \ \   &\vert \tilde g_j\vert=O(1)(1+r)^{-\sigma-2}
\end{aligned}
\right.
\end{equation}
is one-dimensional and spanned by the nontrivial kernel $U_{j}$, where ${\tilde g_{j}}=U_{j}g_j$, $j =1, 2$ and $\sigma>0$ is a small constant.
\end{Lemma}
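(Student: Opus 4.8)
The plan is to pass to the ratio $g_j:=\tilde g_j/U_j$, show by a far-field ODE analysis that $g_j$ is bounded and in fact tends to a constant, and then conclude that $g_j$ is constant by the maximum principle. First I would reduce the equation: writing $g_j=\tilde g_j/U_j$, the system \eqref{limitingoperator} is exactly $\nabla\cdot(U_j\nabla g_j)=0$ in $\mathbb{R}^2$. Since $U_j=c_je^{\Gamma_{j,\mu_j}}>0$ is smooth, this is equivalent to
\[
\Delta g_j+\nabla\Gamma_{j,\mu_j}\cdot\nabla g_j=0\qquad\text{in }\mathbb{R}^2,
\]
a locally uniformly elliptic equation with smooth coefficients and \emph{no zeroth-order term}. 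The constant $g_j\equiv1$, i.e. $\tilde g_j=U_j$, solves it, and $U_j$ is admissible: by $\Gamma_{j,\mu_j}(y)=-m_j\log|y|+O(|y|^{2-\hat m})$ one has $U_j\sim c_j|y|^{-m_j}$, and since $m_j>2$ while $\sigma>0$ is small ($m_j>\sigma+2$), $U_j=O((1+r)^{-\sigma-2})$ and $U_j\in H^2_0(\mathbb{R}^2)$. Hence it suffices to prove that every admissible $g_j$ is constant.

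The core step is the far-field analysis. From $|\tilde g_j|=O((1+r)^{-\sigma-2})$ and $U_j^{-1}\sim c_j^{-1}r^{m_j}$ we get the crude bound $|g_j(y)|\le C(1+|y|)^{m_j-2-\sigma}$. Using $\nabla\Gamma_{j,\mu_j}=-\frac{m_j}{r}\frac{y}{|y|}+O(|y|^{1-\hat m})$, the equation becomes, in polar coordinates for $r\gg1$,
\[
\partial_{rr}g_j+\tfrac1r\partial_r g_j+\tfrac1{r^2}\partial_{\theta\theta}g_j-\tfrac{m_j}{r}\partial_r g_j=E_j,\qquad |E_j|\le Cr^{-\hat m}\big(|\partial^2 g_j|+|\partial g_j|\big).
\]
Decomposing $g_j(r,\theta)=\sum_{n\in\mathbb{Z}}g_{j,n}(r)e^{in\theta}$, each mode satisfies a perturbed Euler equation with indicial exponents $\alpha_n^{\pm}=\tfrac12\big(m_j\pm\sqrt{m_j^{2}+4n^{2}}\big)$, so $\alpha_n^{-}\le0<m_j\le\alpha_n^{+}$, with $\alpha_0^{-}=0$, $\alpha_0^{+}=m_j$. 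Since $\alpha_n^{+}\ge m_j>m_j-2-\sigma$ for every $n$, the bound $|g_j|\le Cr^{m_j-2-\sigma}$ excludes any component along the fast branch $\alpha_n^{+}$; a variation-of-parameters/contraction argument absorbing $E_j$ then gives $g_{j,0}(r)\to c_0$ and $g_{j,n}(r)=O(r^{\alpha_n^{-}})\to0$ for $n\neq0$ as $r\to\infty$. Hence $g_j(y)\to c_0$ as $|y|\to\infty$; in particular $g_j$ is bounded. This step, carried out mode by mode with estimates uniform in $n$, parallels the decay analyses in \cite{lin2010profile,kong2022existence}.

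To finish, set $h:=g_j-c_0$, so $\nabla\cdot(U_j\nabla h)=0$, i.e. $\Delta h+\nabla\Gamma_{j,\mu_j}\cdot\nabla h=0$; as the operator has no zeroth-order term, the weak maximum principle yields $\max_{\overline{B_R}}|h|=\max_{\partial B_R}|h|$ for all $R$. Given $\epsilon>0$, by the previous step there is $R_\epsilon$ with $|h|<\epsilon$ on $\mathbb{R}^2\setminus B_{R_\epsilon}$, whence $|h|\le\epsilon$ on $\mathbb{R}^2$; letting $\epsilon\to0$ gives $h\equiv0$, so $\tilde g_j=c_0U_j$ and the bounded solution space is one-dimensional, spanned by $U_j$. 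Equivalently, once $g_j$ is known bounded one may test $\nabla\cdot(U_j\nabla g_j)=0$ against $g_j\eta_R^2$ to obtain $\int_{\mathbb{R}^2}U_j|\nabla g_j|^2=0$ (using $\int_{B_{2R}\setminus B_R}U_jg_j^2|\nabla\eta_R|^2\le CR^{-m_j}\to0$ and $\int_{\mathbb{R}^2}U_j<\infty$ since $m_j>2$), or observe that $\Delta+\nabla\Gamma_{j,\mu_j}\cdot\nabla$ is symmetric for the finite measure $e^{\Gamma_{j,\mu_j}}\,dy$ and thus has the strong Liouville property.

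The main obstacle is the second step: upgrading the a priori growth bound $g_j=O(r^{m_j-2-\sigma})$ to $g_j\to\text{const}$. This needs the Fourier decomposition together with a careful fixed-point treatment of the perturbation $E_j$, uniformly in the mode index, because for large $m_j$ neither a plain energy estimate nor the maximum principle alone closes the gap between the two indicial branches.
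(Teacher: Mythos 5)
Your route is genuinely different from the paper's, and as written it is incomplete at exactly the step you flag as the main obstacle. The paper never performs a far-field Fourier/indicial analysis and never needs boundedness of $g_j$: it tests $\nabla\cdot(U_j\nabla g_j)=0$ directly against $g_{j\pm}^{N}\eta^2$, with $\eta$ a cutoff at scale $R$ and, crucially, with the small exponent $N=\frac{\sigma}{2(m_j-\sigma-2)}$. Integration by parts gives $\int_{B_{2R}}U_j\big|\nabla g_{j\pm}^{\frac{N+1}{2}}\big|^2\le C\int_{B_{2R}}|g_{j\pm}|^{N+1}\,\big|\nabla\cdot(U_j\nabla\eta^2)\big|$, and inserting the admissible growth $|g_j|\le C\,U_j^{-1}(1+r)^{-2-\sigma}\sim r^{m_j-2-\sigma}$ makes the right-hand side $O(R^{-\sigma/2})\to0$, so $g_{j+}$ and $g_{j-}$ are constants and $\tilde g_j\in\mathrm{span}\{U_j\}$. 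This weighted Caccioppoli/Moser trick is precisely what overcomes the obstruction you correctly identified (for $m_j\ge4$ the plain exponent-two energy estimate does not close, since $\int_{B_{2R}\setminus B_R}U_jg_j^2|\nabla\eta_R|^2\sim R^{m_j-4-2\sigma}$), and it does so in a few lines, working directly with the polynomial growth permitted by \eqref{limitingoperator}.

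By contrast, the load-bearing step of your proposal --- upgrading $g_j=O(r^{m_j-2-\sigma})$ to $g_j\to c_0$ at infinity by mode-by-mode variation of parameters, with contraction constants uniform in the Fourier index and enough regularity to resum the series --- is only asserted, and you yourself call it the main obstacle; until it is carried out the proof is not complete. Two further points in that step need care: the perturbation is $E_j=-\nabla\big(\Gamma_{j,\mu_j}+m_j\log|y|\big)\cdot\nabla g_j=O(r^{1-\hat m})|\nabla g_j|$ (no second derivatives and weight $r^{1-\hat m}$, not $r^{-\hat m}$ as you wrote), and the gradient bound on the remainder of $\Gamma_{j,\mu_j}$ is itself an elliptic estimate you must quote from \cite{lin2010profile}; moreover $E_j$ involves $\nabla g_j$, for which you have no a priori bound without interior gradient estimates on dyadic annuli. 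Your two closings (maximum principle, or the energy identity once $g_j$ is bounded) are fine, so the gap is confined to, but squarely located at, the unproven upgrade; if you want a short complete argument, replacing that whole step by the paper's small-power test function is the economical fix.
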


\begin{proof}
Thanks to the definition of $\tilde g_j$ shown in (\ref{innerproblem}), we have $ g_j$ satisfies
\begin{align}\label{211}
\nabla\cdot(U_{j}\nabla g_j)=0,~~~y\in \mathbb R^2.
\end{align}
Denote $g_{+}$ and $\eta$ as
\begin{equation*}
 g_{j+}=
   \left\{
\begin{aligned}
& g_j,  &&g_j>0,\\
& 0,   &&g_j\leq 0,
\end{aligned}
\right.
\text{~~~and~~~}
\eta=
  \left\{
\begin{aligned}
&1,    &&y\in B_{R}(0),    \\
&0,    &&y\in \mathbb R^2\backslash B_{2 R}(0),
\end{aligned}
\right.
\end{equation*}
where $R>0$ is a constant.  Upon multiplying (\ref{211}) by $ g_{j+}^{N}\eta^2$ with $N$ being determined later on, we integrate it over $\mathbb R^2$ to get
\begin{align*}
0=&\int_{\mathbb R^2}\nabla\cdot(U_{j}\nabla g_j)g_{j+}^{N}\eta^2dy=-\int_{\mathbb R^2}(U_{j}\nabla g_j)\cdot\nabla(g_{j+}^{N}\eta^2)dy,
\end{align*}
which implies
\begin{align*}
0=\int_{\mathbb R^2}U_{j}\nabla g_j\cdot \nabla (g_{j+}^N)\eta^2dy  + \int_{\mathbb R^2}(U_{j}\nabla g_j)\cdot g_{j+}^{N}\nabla(\eta^2)dy.
\end{align*}
Noting that the support of $\eta$ is $B_{2R}(0)$, one further has
\begin{align*}
\int_{B_{2R}(0)}U_{j}\nabla g_j \cdot \nabla (g_{j+}^N)\eta^2dy=-\int_{\mathbb R^2}(U_{j}\nabla g_j)\cdot g_{j+}^{N}\nabla(\eta^2)dy.
\end{align*}
Then we utilize the integration by parts to get
\begin{align*}
\frac{4N}{(N+1)^2}\int_{B_{2R}(0)}U_{j}\big\vert\nabla g_{j+}^{\frac{N+1}{2}}\big\vert^2dy=\frac{1}{N+1}\int_{B_{2R}(0)}g_{j+}^{N+1}\nabla\cdot(U_{j}\nabla\eta^2)dy.
\end{align*}
Therefore,
\begin{align}\label{212}
\int_{B_{2R}(0)}U_{j}\big\vert\nabla g_{j+}^{\frac{N+1}{2}}\big\vert^2dy\leq C\int_{B_{2R}(0)}\vert g_{j+}\vert^{N+1}\cdot \vert\nabla\cdot(U_{j}\nabla\eta^2)\vert dy,
\end{align}
where $C>0$ is some large constant. Since $g_j$ satisfies $\vert g_{j}\vert\leq C_1 \frac{1}{U_{j}}(1+r)^{-\sigma-2}$ for small $\sigma>0$ and some constant $C_1>0$, one has from (\ref{212}) that 
\begin{align}\label{referee21}
\int_{B_{2R}(0)}U_{j}\big\vert\nabla_{y} g_{j+}^{\frac{N+1}{2}}\big\vert^2dy\leq C_2(2R)^2\Big(\frac{1}{U^{N}_{j}}\big\vert_{\vert y\vert=2R}\Big)\frac{1}{(1+2R)^{(N+1)\sigma+2N+2}},
\end{align}
where $C_2>0$ is a constant. 
 Then we obtain for some positive constant $C_3$,
\begin{align*}
4R^2\Big(\frac{1}{U^{N}_{j}}\big\vert_{\vert y\vert=2R}\Big)\leq C_3(1+2R)^{m_jN+2}.
\end{align*}
By choosing $N=\frac{\sigma}{2(m_j-\sigma-2)}$, one uses \eqref{referee21} to get
\begin{align*}
\int_{B_{2R}(0)}U_{j}\big\vert\nabla_{y} \big(g_{j+}^{\frac{N+1}{2}}\big)\vert^2dy\leq C_2C_3(1+2R)^{m_jN+2}\frac{1}{(1+2R)^{(N+1)\sigma+2N+2}}\rightarrow 0\text{~as~}R\rightarrow\infty,
\end{align*}
where $\sigma$ can be chosen small enough.  Thus, we obtain $g_{j+}\equiv C_4$ for $y_m\in\mathbb R^2$, where $C_4>0$ is a constant. Proceeding with the similar argument for $g^N_{j-}\eta^2$, we obtain $g_{j-}\equiv C_5$ for some constant $C_5>0$.  This completes the proof of the lemma.
\end{proof}

Next, we analyze the non-degeneracy of the linearized operators of Liouville systems.  Define
\begin{align*}
\mathcal L[w_1, w_2]=\bigg(\Delta w_1+\sum_{j=1, 2}b_{1j}U_{j} w_j,\Delta w_1+\sum_{j=1, 2}b_{2j}U_{j} w_j\bigg)^T.
\end{align*}
We have its adjoint operator is 
\begin{align*}
\mathcal L^*[w_1,w_2]=\bigg(\Delta w_1+U_{1}\sum_{j=1, 2}b_{1j}w_j,\Delta w_1+U_{2}\sum_{j=1, 2}b_{2j}w_j\bigg)^T.
\end{align*}
For convenience, we denote the following transform 
      \begin{equation}\label{2024114logtransUup}
         (\log U^1, \log U^2) = \Big(\sum_{j=1, 2}b^{1j}\log U_j, \sum_{j=1, 2}b^{2j}\log U_j \Big),
      \end{equation}      
where $(b^{ij})_{2 \times 2}$ is the inverse matrix of $ (b_{ij})_{2 \times 2}$.   For the kernel of $\mathcal L(w_1,w_2)$, we have the following lemma.
\begin{Lemma}
Assume that $(w_1,w_2)^T$ satisfies
\begin{equation}
\left\{
\begin{aligned}
&\mathcal L[w_1,w_2]=0    \ \  \text{ in }  \ \    \mathbb R^2,\\
&|w_j(y)|\leq C(1+|y|^{\tau}),  \ \  \text{for some }\tau\in[0,1)\text{ with }j=1,2,
\end{aligned}
\right.
\end{equation}
 then we have  
\begin{align}
(w_1, w_2)^T \in \text{span}\Big\{(\partial_1 \log U_{1},\partial_1\log U_{2})^T,(\partial_2 \log U_{1},\partial_2\log U_{2})^T,(\partial_3 \log U_{1},\partial_3\log U_{2})^T \Big\},
\end{align}
where $\partial_1\log U_{j}=\partial_{x_1}\log U_{j}$, $\partial_{2}\log U_{j}=\partial_{x_2}\log U_{j}$ and $\partial_3\log U_{j}=r(\log U_{j})'(r)+{2}$ for $j=1,2.$
\end{Lemma}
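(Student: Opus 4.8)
The plan is to reduce the classification of the kernel of $\mathcal{L}$ to the known non-degeneracy result for the scalar Liouville system from \cite{lin2010profile}, by diagonalizing the coupling through the inverse matrix $(b^{ij})$. First I would introduce the new unknowns $\omega_i := \sum_{j}b^{ij}(\text{something})$; more precisely, observe that if one sets $\phi_i := \sum_j b^{ij} w_j$ — or, dually, works directly with $(w_1,w_2)$ and the transform \eqref{2024114logtransUup} — then the system $\mathcal{L}[w_1,w_2]=0$, namely $\Delta w_i + \sum_j b_{ij} U_j w_j = 0$, becomes, after applying $(b^{ij})$, a system of the form $\Delta \phi_i + U_i \big(\sum_j b_{ij}\phi_j\big)=0$, which is exactly the adjoint system $\mathcal{L}^*[\phi_1,\phi_2]=0$. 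Since $B=(b_{ij})$ is symmetric, $\mathcal{L}$ and $\mathcal{L}^*$ are in fact intertwined by the invertible change of variables given by $B$, so it suffices to classify the bounded (or subcritically growing) kernel of one of them. I would phrase everything on the $(w_1,w_2)$ side and use $\mathcal{L}^*$ only as a bookkeeping device.

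Next I would recall from \cite{lin2010profile} that for the symmetric, irreducible, positive matrix $B$, the entire solution $(\Gamma_{1,\mu_1},\Gamma_{2,\mu_2})$ of the Liouville system \eqref{liouvillezhanglin} is non-degenerate in the class of solutions growing slower than linearly: the linearized operator annihilates exactly the three-dimensional space generated by the translations $\partial_{x_1}$, $\partial_{x_2}$ and the scaling (dilation) symmetry. Concretely, differentiating \eqref{liouvillezhanglin} in $x_1$ and $x_2$ shows that $(\partial_{x_a}\Gamma_1,\partial_{x_a}\Gamma_2)=(\partial_a\log U_1,\partial_a\log U_2)$, $a=1,2$, lie in $\ker\mathcal{L}$ (since $\partial_a e^{\Gamma_j}=U_j\,\partial_a\log U_j$ and $\mathcal{L}$ is exactly the linearization written in these variables). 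For the dilation, one uses that the family $(\Gamma_{j,\mu_j})$ is invariant under $y\mapsto \lambda y$ combined with an additive shift determined by the $m_j$; differentiating the one-parameter scaling family at $\lambda=1$ produces the generator $\partial_3\log U_j = r(\log U_j)'(r)+2$, and a direct check (using $\Delta$ of a radial function and $U_j=e^{\Gamma_j}$ with $\Gamma_j\sim -m_j\log r$) confirms $\mathcal{L}[(\partial_3\log U_1,\partial_3\log U_2)]=0$, with the ``$+2$'' being precisely the constant forced by the logarithmic asymptotics so that the growth stays $O(\log|y|)=o(|y|)$, inside the admissible class $\tau\in[0,1)$.

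The substantive direction — that these three elements span the whole kernel — is where the real work lies, and this is the step I expect to be the main obstacle. Here I would decompose $(w_1,w_2)$ into Fourier modes in the angular variable, $w_j(r,\theta)=\sum_{n\ge 0} w_{j,n}(r)e^{in\theta}$, and analyze each mode separately, using that $U_j$ is radial so the modes decouple. For $n=0$ the ODE system for $(w_{1,0},w_{2,0})$ has the dilation generator as one solution; the second, linearly independent solution grows at least like a positive power of $r$ (via a Wronskian/variation-of-parameters argument together with the $r^{-m_j}$ decay of $U_j$ and $m_j>2$), hence is excluded by the growth bound $\tau<1$. For $n=1$ the two translation modes exhaust the bounded solutions, again by showing the complementary solutions grow too fast. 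For $n\ge 2$ one shows there are no solutions with growth below linear at all — this is the delicate part, requiring a Pohozaev-type or integral identity, or the indicial-root analysis at $r\to\infty$ together with a maximum-principle/energy argument near $r=0$, to rule out solutions in the subcritical growth class; this is exactly the content of the non-degeneracy theorem in \cite{lin2010profile}, which I would invoke (for the radial ODE reductions) rather than re-derive. Assembling the modes gives the claimed three-dimensional span, completing the proof.
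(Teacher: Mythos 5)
Your proposal is correct and, at its core, takes the same route as the paper: the paper's entire proof is a citation to Theorem 2.1 of \cite{lin2013liouville1} and Lemma 3.1 of \cite{lin2010profile}, and your argument likewise delegates the essential completeness-of-the-span step to that non-degeneracy theorem, while the parts you work out explicitly (the $B$-intertwining of $\mathcal L$ with $\mathcal L^*$, and that the translation and dilation generators $\partial_a\log U_j$, $r(\log U_j)'(r)+2$ lie in the kernel) are the easy inclusions consistent with the cited proof.
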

\begin{proof}
See the proof of Theorem 2.1 in \cite{lin2013liouville1} and Lemma 3.1 in \cite{lin2010profile}.
\end{proof}
  
In addition, for the kernel of adjoint operator $\mathcal L^*,$ we have the following result.
\begin{Lemma}\label{lemm13adjoint}
Assume $(w_1, w_2)^T$ is a solution to
\begin{equation}
\left\{
\begin{aligned}
&\mathcal L^*[w_1, w_2]=0 \ \ \text{ in } \ \ \mathbb R^2,\\
&|w_j(z)|\leq C(1+|z|^{\tau}),\ \  \text{ with } \ \ j=1,2,
\end{aligned}
\right.
\end{equation}
for some $\tau \in [0, 1)$,  then we have
\begin{align}
(w_1, w_2)^T\in \text{span}\Big\{(\partial_1 \log U^{1},\partial_1\log U^{2})^T,(\partial_2 \log U^{1},\partial_2\log U^{2})^T,(\partial_3 \log U^{1},\partial_3\log U^{2})^T \Big\},
\end{align}
where $U^1$ and $U^2$ are defined in (\ref{2024114logtransUup}). 
\end{Lemma}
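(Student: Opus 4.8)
The plan is to reduce Lemma~\ref{lemm13adjoint} to the preceding classification of $\ker\mathcal L$ by exploiting the fact that $\mathcal L$ and $\mathcal L^*$ are intertwined by the constant, symmetric, invertible matrix $B=(b_{ij})_{2\times 2}$ from \eqref{bvaluenew2024104}. Writing $D:=\mathrm{diag}(U_1,U_2)$ and regarding $(w_1,w_2)^T$ as a column vector, one has $\mathcal L=\Delta I+BD$ and $\mathcal L^*=\Delta I+DB$; the symmetry $b_{ij}=b_{ji}$ is precisely what makes the second expression the $L^2$-adjoint of the first. Since $B$ has constant entries, multiplication by $B$ commutes with $\Delta$, and a direct component computation gives the intertwining identity
\[
\mathcal L\circ B \;=\; B\circ\mathcal L^{*}.
\]
Consequently, if $(w_1,w_2)^T$ solves $\mathcal L^*[w_1,w_2]=0$ with $|w_j(y)|\le C(1+|y|^{\tau})$, $\tau\in[0,1)$, then $(v_1,v_2)^T:=B(w_1,w_2)^T$ satisfies $\mathcal L[v_1,v_2]=B\,\mathcal L^*[w_1,w_2]=0$, and because $B$ is a constant invertible matrix, $(v_1,v_2)^T$ obeys the same sublinear bound $|v_j(y)|\le C'(1+|y|^{\tau})$.

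Next I would invoke the preceding lemma on $\ker\mathcal L$ (established in \cite{lin2013liouville1,lin2010profile}): any such $(v_1,v_2)^T$ lies in $\mathrm{span}\{(\partial_k\log U_1,\partial_k\log U_2)^T:k=1,2,3\}$, where $\partial_1,\partial_2$ are the Cartesian derivatives coming from translation invariance and $\partial_3\log U_j=r(\log U_j)'(r)+2$ comes from the dilation invariance of the Liouville system \eqref{liouvillezhanglin}. It then remains to transform back via $(w_1,w_2)^T=B^{-1}(v_1,v_2)^T$. For the translation kernels $k=1,2$, the $i$-th component of $B^{-1}(\partial_k\log U_1,\partial_k\log U_2)^T$ is $\sum_{j}b^{ij}\partial_k\log U_j=\partial_k\bigl(\sum_j b^{ij}\log U_j\bigr)=\partial_k\log U^{i}$, using the definition \eqref{2024114logtransUup}. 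For the dilation kernel $k=3$, one gets $\sum_j b^{ij}\bigl(r(\log U_j)'(r)+2\bigr)=r(\log U^{i})'(r)+2\sum_j b^{ij}$, which is exactly the scaling-type kernel function attached to $\log U^{i}$, i.e.\ what the statement denotes $\partial_3\log U^{i}$. Hence $(w_1,w_2)^T\in\mathrm{span}\{(\partial_k\log U^1,\partial_k\log U^2)^T:k=1,2,3\}$, and running the same computation with $B$ in place of $B^{-1}$ shows $B^{-1}$ maps $\ker\mathcal L$ bijectively onto $\ker\mathcal L^*$, so these three vectors do span $\ker\mathcal L^*$ within the admissible growth class.

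The argument carries no hard analysis: all the PDE content, namely non-degeneracy of the linearized Liouville operator among functions of growth $o(|y|)$, is imported from the cited lemma. The only points needing care are algebraic bookkeeping — verifying the intertwining $\mathcal L\circ B=B\circ\mathcal L^*$ componentwise (where symmetry of $B$ enters), checking that the constant change of variables $w\mapsto Bw$ preserves the space $\{|w_j|\le C(1+|y|^{\tau})\}$, and correctly identifying the additive constant in the dilation kernel after applying $B^{-1}$ with the symbol $\partial_3\log U^{i}$. I expect this last matching to be the only place where one must be genuinely attentive; everything else is routine.
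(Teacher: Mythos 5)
Your proof is correct, but it is not the route the paper takes: the paper disposes of this lemma by citation alone (``see the proof of Corollary 5.2 in \cite{huang2019existence}''), whereas you reduce it to the preceding kernel lemma for $\mathcal L$ through the conjugation identity $\mathcal L\circ B=B\circ\mathcal L^{*}$, which holds because $B=(b_{ij})$ from \eqref{bvaluenew2024104} is constant (its symmetry is what makes $\Delta+DB$, with $D=\mathrm{diag}(U_1,U_2)$, the $L^2$-adjoint of $\Delta+BD$ in the first place). Since $B$ is constant and invertible it preserves the growth class $|w_j|\le C(1+|y|^{\tau})$, so within that class $\ker\mathcal L^{*}=B^{-1}\ker\mathcal L$, and the three spanning vectors are the images under $B^{-1}$ of those in the previous lemma; this is sound. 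What your route buys is an argument that is self-contained modulo the already-quoted classification of $\ker\mathcal L$ from \cite{lin2013liouville1,lin2010profile}, instead of a second external import, and it makes the relation between the two kernels explicit. One point worth recording: your computation gives the dilation element of $\ker\mathcal L^{*}$ as $\sum_j b^{ij}\bigl(r(\log U_j)'(r)+2\bigr)=r(\log U^{i})'(r)+2\sum_j b^{ij}$, not $r(\log U^{i})'(r)+2$; a literal reading of ``$\partial_3\log U^{i}$'' with the constant $2$ would in general not lie in $\ker\mathcal L^{*}$ (it does only when the row sums of $B^{-1}$ equal $1$), so your interpretation via \eqref{2024114logtransUup} is the correct one and in fact sharpens the statement's notation. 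You were right to single this out as the only step needing genuine care.
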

\begin{proof}
See the proof of Corollary 5.2 in \cite{huang2019existence}.
\end{proof}
With the aid of Lemma \ref{nondegeneracy1} and Lemma \ref{lemm13adjoint}, one finds
\begin{Lemma}\label{lemma31}
  Suppose that $h_j$ satisfy
  \begin{equation}\label{innercon}
      \int_{\R^2}h_j(y)dy = 0,~\int_{\R^2}h_j(y)y_ldy=0 \ \   \text{for} \ \ j =1,2,\, l=1, 2,
  \end{equation}
  then we have for any $\Vert h_j\Vert_{4+\sigma}<\infty$ with $\sigma\in(0,1)$, there exist a solution $\boldsymbol{\varphi}:=(\varphi_1,\varphi_2)^T=\boldsymbol{\mathcal{T}_{in}}[h_1,h_2]$ to \eqref{innerproblem} such that
    \begin{equation}\label{innerprori}
      \|\varphi_j\|_{2 + \sigma} \le C_j \|{h}_j\|_{4 + \sigma}, 
    \end{equation}
    where $\boldsymbol{\mathcal{T}_{in}}[h_1,h_2]$ is a continuous linear operator from the Banach space $\mathcal C^*\times \mathcal C^*$ of all functions $(h_1,h_2)^T$ in $L^\infty\times L^\infty$ for which $\Vert h_1\Vert_{4+\sigma}+\Vert h_2\Vert_{4+\sigma}<+\infty$ into $L^\infty\times L^\infty$.
\end{Lemma}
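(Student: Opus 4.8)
The plan is to solve the inner system \eqref{innerproblem} by exploiting its triangular structure in the blocks $(g_1,g_2)\mapsto(w_1,w_2)\mapsto(\varphi_1,\varphi_2)$: first solve the two weighted divergence equations $\nabla_y\cdot(U_j\nabla g_j)=h_j$ for the intermediate unknowns $g_j$; then feed the $g_j$ into the linearized Liouville system $\mathcal L[w_1,w_2]=-(b_{11}U_1g_1+b_{12}U_2g_2,\,b_{21}U_1g_1+b_{22}U_2g_2)^T$ and invert $\mathcal L$ via the Fredholm alternative, using the non-degeneracy of its adjoint from Lemma \ref{lemm13adjoint}; and finally set $\varphi_j:=U_j(g_j+w_j)$ and read off the a priori bound \eqref{innerprori}. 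Throughout, $\|f\|_{\mu}$ denotes the weighted sup-norm $\sup_{\R^2}(1+|y|)^{\mu}|f(y)|$, and $\sigma$ is taken small enough (in particular $\sigma<\hat m-2$).

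First I would solve $\nabla_y\cdot(U_j\nabla g_j)=h_j$ in $\R^2$ for $j=1,2$. Writing $\tilde g_j:=U_jg_j$ and using $U_j\sim(1+|y|)^{-m_j}$ with $m_j>2$, one integrates the radial (zeroth) mode and finds that a solution with $\tilde g_j$ lying in the decay class — rather than tending to a nonzero constant at infinity — exists precisely because $\int_{\R^2}h_j=0$; this leaves one additive constant $c_j$ free. For the remaining angular modes, solvability together with the decay rate $(1+|y|)^{-2-\sigma}$ follows from the non-degeneracy of $\nabla\cdot(U_j\nabla(\cdot/U_j))$ in Lemma \ref{nondegeneracy1}: it forces the homogeneous solution that is regular at the origin to have a growing tail at infinity, so the growing tail of a particular solution can always be cancelled against it. Summing the modes produces $g_j$, unique up to $c_1,c_2$, with $\|\tilde g_j\|_{2+\sigma}\le C\|h_j\|_{4+\sigma}$.

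Next I would substitute $(g_1,g_2)$ into the $w$–equations of \eqref{innerproblem}, whose right–hand side is of order $(1+|y|)^{-2-\sigma}$. By the cited linear theory for the symmetric linearized Liouville operator $\mathcal L$ together with Lemma \ref{lemm13adjoint}, this system has a solution with $|w_j|=O(\log|y|)$ — unique modulo $\ker\mathcal L$ — provided the right–hand side is $L^2$–orthogonal to the three generators $(\partial_i\log U^1,\partial_i\log U^2)^T$, $i=1,2,3$, of $\ker\mathcal L^*$ (see \eqref{2024114logtransUup}). Using $B=B^T$ and $B^{-1}=(b^{ij})$, integration by parts turns the orthogonality against the two translation generators into $\sum_{l}\int_{\R^2}y_ih_l\,dy=0$, which holds by the first–moment hypotheses, and turns the orthogonality against the dilation generator into a single scalar identity relating $\sum_l\int_{\R^2}|y|^2h_l\,dy$ to a linear combination of the $\int_{\R^2}U_lg_l\,dy$; since $\int_{\R^2}U_l\,dy=2\pi\sigma_l>0$, this identity can be enforced by choosing the free constants $c_1,c_2$ from the previous step. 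Setting $\varphi_j:=\tilde g_j+U_jw_j$ and noting that $U_jw_j$ decays like $(1+|y|)^{-m_j}\log|y|$, hence faster than $(1+|y|)^{-2-\sigma}$, we obtain $\varphi_j$ in the weighted space with $\|\varphi_j\|_{2+\sigma}\le C_j\|h_j\|_{4+\sigma}$, and by construction $(\varphi_1,\varphi_2,w_1,w_2)$ solves \eqref{innerproblem}, hence \eqref{eq2.1}. Since every step above is a bounded linear operation, $\boldsymbol{\mathcal{T}_{in}}$ is a continuous linear map on $\mathcal C^*\times\mathcal C^*$.

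I expect the main obstacle to be this middle step. The operator $\mathcal L$ is not self–adjoint, so its three solvability conditions are dictated by $\ker\mathcal L^*$ from Lemma \ref{lemm13adjoint} and must be matched with the structural information on $h$: the two translation conditions fall out of the first–moment hypotheses, but the dilation condition does not reduce to any moment of $h$ and has to be absorbed by the residual gauge freedom $c_1,c_2$ in the $g_j$. One therefore has to check that this adjustment is always possible — i.e. that the linear functional of $(c_1,c_2)$ appearing in the dilation identity is not identically zero under the standing assumptions (H1)–(H3) — and that it does not degrade the weighted estimates. A secondary technicality is to propagate the exact $(1+|y|)^{-2-\sigma}$ and $\log|y|$ rates through every angular mode of the variation–of–parameters formulae without a logarithmic loss, which is also why $\sigma$ must be kept below $\hat m-2$.
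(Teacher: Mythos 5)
Your proposal uses the same decomposition as the paper: solve $\nabla\cdot(U_j\nabla g_j)=h_j$ for $\tilde g_j=U_jg_j$, feed the result into the linearized Liouville system for $(w_1,w_2)$, and set $\varphi_j=\tilde g_j+U_jw_j$. The genuine gap is in your middle step. You claim that solvability of $\mathcal L[w_1,w_2]=-(b_{11}\tilde g_1+b_{12}\tilde g_2,\,b_{21}\tilde g_1+b_{22}\tilde g_2)^T$ in the class $|w_j|=O(\log|y|)$ requires orthogonality to all three generators of $\ker\mathcal L^*$ from Lemma \ref{lemm13adjoint}, and you propose to enforce the dilation identity by tuning the additive constants $c_1,c_2$ in $g_1,g_2$; but you never show that the resulting linear functional of $(c_1,c_2)$ is non-degenerate, nor that the constants so chosen obey $|c_j|\le C\|h_j\|_{4+\sigma}$, which \eqref{innerprori} would require. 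As written, the proof hinges on an unverified claim. Moreover, the dilation condition is not actually needed for what you want: the dilation element of $\ker\mathcal L^*$ is radial, so it only concerns the zeroth Fourier mode, and at mode $0$ the radial system $\Delta_r\boldsymbol W+\boldsymbol A\boldsymbol W=\boldsymbol f_0$ with $\boldsymbol A=O(r^{-m})$, $m>2$, and $\boldsymbol f_0=O(r^{-2-\sigma})$ always admits a solution of at most logarithmic growth by variation of parameters, with no solvability condition; boundedness of $w_{j0}$ is irrelevant because only $U_jw_{j0}=O(r^{-m_j}\log r)$ enters $\varphi_{j0}$.

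This is exactly how the paper's proof of Lemma \ref{lemma31} proceeds: it performs a Fourier decomposition, solves the $\tilde g_{jk}$-equations by explicit integration (mode $0$, using the mass condition) and by barrier functions with the maximum principle (modes $k\ge 1$, which is also where the $(1+r)^{-2-\sigma}$ rates you worry about are obtained), solves mode $0$ of the $w$-system by variation of parameters using the two log-growing fundamental solutions of the Liouville linearization and its adjoint, and treats modes $k\ge 1$ by a Fredholm argument in weighted spaces $X_\alpha,Y_\alpha$, where the only orthogonality required is against the translation elements at mode $1$; that condition is verified from the first-moment hypotheses through the $g_j$-equation (testing with $y_i$ yields $\int_{\R^2}g_jU_j\partial_i\log U_j\,dy=0$ for each $j$ separately), essentially as you indicate, with the non-degeneracy input being Lemma \ref{nondegeneracy1} and Lemma \ref{lemm13adjoint}. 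To repair your argument, either drop the dilation condition and solve the radial mode directly in the log-growth class, or supply the missing non-degeneracy and quantitative bound for the $(c_1,c_2)$ adjustment.
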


\begin{proof}
Similarly as shown in \cite{kong2022existence}, we perform Fourier projection and obtain the $k$-th mode of (\ref{innerproblem}) as follows
\begin{equation}\label{fourierexpansion}
\left\{
\begin{aligned}
&\frac{d^2\tilde g_{1k}}{dr^2}+\frac{1}{r}\frac{d\tilde g_{1k}}{dr}-\frac{k^2}{r^2}\tilde g_{1k}+(\log U_{1})_r\frac{d\tilde g_{1k}}{dr}+U_{1}\tilde g_{1k} = h_{1k},\\
 &\frac{d^2\tilde g_{2k}}{dr^2}+\frac{1}{r}\frac{d\tilde g_{2k}}{dr}-\frac{k^2}{r^2}\tilde g_{2k}+(\log U_2)_r\frac{d\tilde g_{2k}}{dr}+U_2\tilde g_{2k}= h_{2k}, \\
&\frac{d^2 w_{1k}}{dr^2}+\frac{1}{r}\frac{d w_{1k}}{dr}-\frac{k^2}{r^2} w_{1k}+b_{11}U_{1} w_{1k}+b_{11}\tilde g_{1k}+b_{12}U_{2} w_{2k}+b_{12}\tilde g_{2k}=0,\\
&\frac{d^2 w_{2k}}{dr^2}+\frac{1}{r}\frac{d w_{2k}}{dr}-\frac{k^2}{r^2} w_{2k}+b_{11}U_{1} w_{2k}+b_{11}\tilde g_{1k}+b_{12}U_{2} w_{2k}+b_{12}\tilde g_{2k}=0,
\end{aligned}
\right.
\end{equation}
where we define $U_j g_j=\tilde g_j$ and
\begin{align}
\tilde g_j=\sum_{k=0}^\infty \tilde g_{jk}(r)e^{ik\theta}, \ \  h_j=\sum_{k=0}^\infty h_j(r)e^{ik\theta},  \ \  w_j=\sum_{k=0}^\infty w_{jk}(r)e^{ik\theta}, \ \   j=1,2.
\end{align}
First of all, we consider the $0$-th mode in (\ref{fourierexpansion}), which is 
\begin{equation}\label{originalmode0}
\left\{
\begin{aligned}
&\frac{d^2\tilde g_{10}}{dr^2}+\frac{1}{r}\frac{d\tilde g_{10}}{dr} +(\log U_{1})_r\frac{d\tilde g_{10}}{dr}+U_{1}\tilde g_{10}=h_{10},\\
&\frac{d^2\tilde g_{20}}{dr^2}+\frac{1}{r}\frac{d\tilde g_{20}}{dr} +(\log U_2)_r\frac{d\tilde g_{20}}{dr}+U_{2}\tilde g_{20}=h_{20},\\
&\frac{d^2 w_{10}}{dr^2}+\frac{1}{r}\frac{d w_{10}}{dr}+b_{11}U_{1} w_{10}+b_{12}U_{2} w_{20}=f_{10},\\
&\frac{d^2 w_{20}}{dr^2}+\frac{1}{r}\frac{d w_{20}}{dr}+b_{21}U_{1} w_{10}+b_{22}U_{2} w_{20}=f_{20},
\end{aligned}
\right.
\end{equation}
where 
\begin{equation}
f_{j0}:=-b_{j1}\tilde g_{10}-b_{j2}\tilde g_{20},~j=1,2.
\end{equation}
We choose the solution to the $\tilde g_j$-equation in (\ref{originalmode0}) as
\begin{align}
\tilde g_j =U_{j}g_{j0},~~g_{j0}=\int_{r}^{\infty}\frac{1}{\rho U_{j}(\rho)}\int_0^{\rho}h_{j0}(s)s\,ds\,d\rho.
\end{align}
By using the mass condition in  (\ref{innercon}), one further has
\begin{align*}
0=\int_{\mathbb R^2}h_j\,dy=\int_0^{2\pi}\int_0^{\infty}
    \sum_{k=0}^\infty h_{jk}e^{ik\theta}r\,dr\,d\theta=2\pi \int_0^{\infty} h_{j0}(r)r\,dr.
\end{align*}
Then it follows 
\begin{equation}\label{decaygj0tildegjo}
g_{j0}\sim \langle r\rangle^{m_j-2-\sigma},~\tilde g_{j0}\sim \langle r\rangle^{-2-\sigma},\text{~for}~\sigma>0\text{ small enough}.
\end{equation}
  Next, we shall solve $(w_{10},w_{20})$ in (\ref{originalmode0}) via the variation-of-parameter method.  To begin with, we focus on the following homogeneous problem 
 \begin{equation}\label{homomode0}
\left\{
\begin{aligned}
\frac{d^2 w_{10}}{dr^2}+\frac{1}{r}\frac{d w_{10}}{dr}+b_{11}U_{1} w_{10}+ b_{12}U_{2} w_{20}=0,\\
\frac{d^2 w_{20}}{dr^2}+\frac{1}{r}\frac{d w_{20}}{dr}+b_{11}U_{1} w_{10}+ b_{12}U_{2} w_{20}=0.
\end{aligned}
\right.
\end{equation}
By using Lemma 2.1 of \cite{lin2010profile}, we have there exist two linearly independent solution pairs $\textbf{Z}_j=(Z_{j1},Z_{j2})^T$ with $j=1,2$ of (\ref{homomode0}), which satisfy
\begin{align}\label{fundamental2025115}
Z_{j1}=O(\log(1+r)),\ \ \ \   Z_{j2}=O(\log(1+r)).
\end{align}
We further rewrite the equation (\ref{homomode0}) as  
\begin{equation}\label{mathcalL0eqf0}
\mathcal L_0[\boldsymbol{W}_0]=\boldsymbol{f}_0,
\end{equation}
where $\boldsymbol{W}_0=(w_{10}, w_{20})^T$, $\boldsymbol{f}_0=(f_{10},f_{20})^T $,
\begin{align}\label{diagfirst}
\mathcal L_0=\text{diag}{(\Delta_r,\Delta_r)}+\boldsymbol{ A},~~\mathcal L^*_0=\text{diag}(\Delta_r,\Delta_r)+\boldsymbol{A^T},
\end{align}
and
\begin{align}\label{boldsymbolA2025}
\boldsymbol{A}=\left( \begin{array}{cc}
b_{11}U_{1} & b_{12}U_{2}  \\
  b_{21}U_{1} & b_{22}U_{2} \\
    \end{array}
  \right).
  \end{align}
Next, we are concerned with the following homogeneous adjoint problem
\begin{align}\label{adjointbecomes}
  \mathcal L^{*}_0 [\textbf{Z}_j^*]=0,
  \end{align}
 where $\mathcal L_0^*$ is defined in \eqref{diagfirst}. We claim that (\ref{adjointbecomes}) admits two linear independent solution pairs $\textbf{Z}^*_j=(Z^*_{j1},Z^*_{j2})$ with $j=1,2$ and they satisfy 
 \begin{align}\label{Zhat1220251}
 Z^*_{j1}=O(\log(1+r)) \text{~and~} Z^*_{j2}=O(\log(1+r)).
 \end{align}
  To show this, we rewrite (\ref{adjointbecomes}) as   
  \begin{equation}\label{adjointproblem}
  \left\{
  \begin{aligned}
  \frac{d^2 w_{1}}{dr^2}+\frac{1}{r}\frac{d w_1}{dr}+b_{11}U_{1} w_{1}+b_{21}U_{1} w_{2}=0,\\
  \frac{d^2 w_2}{dr^2}+\frac{1}{r}\frac{d w_2}{dr}+b_{12}U_{2} w_{1}+b_{22}U_{2} w_{2}=0.
  \end{aligned}
  \right.
  \end{equation}
 By using the transform $\hat w_{j}(r)= w_j(e^r)$ with $j=1,2$, one finds
\begin{align}
\frac{d^2\hat w_j}{dr^2}+\bigg(\sum_{k=1, 2}a_{kj}\hat w_{k}\bigg)U_{j}(e^r)e^{2r}=0.
\end{align}
Noting that $U_j(r)\sim \langle r\rangle^{-m_j}$ with $m_j>2$, we follow the similar argument shown in Lemma 2.1 of \cite{lin2010profile} and obtain that there exists a solution $(w_1,w_2)$ satisfying
$$w_j = O(\log(1+r)),~j=1,2,$$
which proves our claim.  
Now, we are ready to solve the inhomogeneous problem (\ref{mathcalL0eqf0}).  In fact, by applying the integration by parts, one gets 
\begin{align}
&\int_{B_r(0)}\mathcal L_0 (\boldsymbol{W}_0)\cdot \textbf{Z}^*_j\,dx-\int_{B_r(0)}\mathcal L^*_0 (\textbf{Z}^*_j)\cdot \boldsymbol{W}_0\,dx\nonumber\\
=&\int_{\partial B_r(0)}\frac{\partial\boldsymbol{W}_0}{\partial\boldsymbol{\nu}}\cdot\textbf{Z}^*_j\,dS-\int_{\partial B_r(0)}\frac{\partial \textbf{Z}_j^*}{\partial\boldsymbol{\nu}}\cdot \boldsymbol{W}_0\,dS,  \ \  j = 1, 2,
\end{align}
where $\textbf{Z}^*_j:=(Z^*_{j1},Z^*_{j2})^T$ are given in \eqref{Zhat1220251}.  It follows that $ \boldsymbol{W}_0$ satisfies the following first order ODEs  
\begin{equation}\label{vectorequation}
\boldsymbol{W}'_0=H(r)\boldsymbol{W}_0+\hat {\boldsymbol{f}_0},
\end{equation}
where 
\begin{equation*}
H:=\left( \begin{array}{cc}
  Z_{11}^* & Z_{12}^*  \\
  Z_{21}^* & Z_{22}^* \\
    \end{array}
  \right)^{-1}\left( \begin{array}{cc}
  \frac{d Z_{11}^*}{dr} & \frac{dZ_{12}^*}{dr}  \\
 \frac{d Z_{21}^*}{dr} & \frac{dZ_{22}^*}{dr} \\
    \end{array}
  \right),\text{~~}\hat {\boldsymbol{f}_0}:=\frac{1}{2\pi r}\left( \begin{array}{cc}
Z_{11}^* & Z_{12}^*  \\
  Z_{21}^* & Z_{22}^* \\
    \end{array}
  \right)^{-1}\left( \begin{array}{c}
\int_{B_r}(f_{10}Z_{11}^*+f_{20}Z_{12}^*)\,dx  \\
\int_{B_r}(f_{10}Z_{21}^*+f_{20}Z_{22}^*)\,dx  \\
    \end{array}
  \right).
  \end{equation*}
In light of (\ref{decaygj0tildegjo}), one finds
  \begin{align}
    \int_{B_r(0)}\big(f_{10}Z_{j1}^*+f_{20}Z_{j2}^* \big)\,dx\sim \langle r\rangle^{-\sigma},
  \end{align}
  where $\sigma>0$ is small enough.  Thus, we have 
  $$\hat {\boldsymbol{f}}_0\sim o\bigg(\frac{1}{r}\bigg).$$
Therefore, we choose a solution to (\ref{vectorequation}) as
$$\boldsymbol{W}_0 = t_1\textbf{Z}_1 + t_2\textbf{Z}_2,$$
  where $t_j=O(\log(1+r))$, $\textbf{Z}_1=(Z_{11},Z_{12})^T$ and $\textbf{Z}_2=(Z_{21},Z_{22})^T$ given by (\ref{fundamental2025115}) are the fundamental solutions of \eqref{vectorequation}.  Moreover, we have the mode 0 of the $\varphi_j-$component in (\ref{innerproblem}) exists and satisfies
$${\varphi}_{j0}=U_{j}{w}_{j0}+\tilde g_{j0}\sim \langle r\rangle^{-2-\sigma},~j=1,2,$$
where $\sigma>0$ is small enough.  

We next focus on the mode $k\geq 1$ in (\ref{fourierexpansion}).  For the $\tilde g_{j}$-equation in (\ref{fourierexpansion}), we define 
\begin{align}\label{ljkfirsttwoeqs}
\mathcal L_{jk}[z]:=-\frac{d^2 z}{dr^2}-\frac{1}{r}\frac{d z}{dr}+\frac{k^2}{r^2}z - (\log U_{j})_r\frac{d z}{dr}-U_{j}z,
\end{align}
and construct the barrier function
\begin{align}\label{wjkbarrier}
z_{jk}:=C_{jk}\Vert h_j\Vert_{4+\sigma}{(1+r)^{-2-\sigma}},
\end{align}
where $\sigma$ is small enough.  Then, we compute to get
\begin{equation}
\begin{aligned}
\mathcal L_{jk}[ z_{jk}]+h_{jk}
=&\frac{C_{jk}\Vert h_j\Vert_{4+\sigma}}{(1+r)^{4+\sigma}}\bigg[-(2+\sigma)(3+\sigma)+\frac{1+r}{r}(2+\sigma)+\frac{k^2}{r^2}(1+r)^2\nonumber\\
&-U_{j0}(1+r)^2-(\log U_{j})_r(1+r)(2+\sigma)\bigg] +h_{jk}.
\end{aligned}
\end{equation}
Since 
\begin{align}
(\log U_{j})_r\sim- \frac{m_j}{r},~\text{for~}r\gg 1,~m_j>2,
\end{align}
we choose positive constant  $R$ large enough such that 
$$\mathcal L_{jk}[z_{jk}] + h_k > 0 \ \ \text{for} \ \  r>R.$$
 In addition, with the fixed $R>0$, we set a large constant $C_{jk}>0$ to obtain
  \begin{equation}\label{311}
      \frac{C_{jk}\|h_j\|_{4 + \sigma}}{(1 + R)^{2 + \sigma}} - \max_{y \in B_R(0)}\tilde g_{jk} > 0,
   \end{equation}
   where $\tilde g_{jk}$ is bounded in $B_R(0)$.  Then, we apply the maximum principle to get
 \begin{equation}\label{eq2.10}
\tilde g_{jk} \le \frac{C_{jk}\|h_j\|_{4 + \sigma}}{(1 + r)^{2 + \sigma}}.
 \end{equation}
Similarly, we have
\begin{equation*}
\mathcal L_{jk}[- z_{jk}] + h_{jk} < 0 \ \   \text{for} \ \  r>R,
\end{equation*}
which implies
 \begin{equation}\label{eq2.1011}
-\tilde g_{jk} \le \frac{C_{jk}\|h_j\|_{4 + \sigma}}{(1 + r)^{2 + \sigma}},
 \end{equation}
 where the maximum principle was used.
 
 One collects (\ref{eq2.10}) and (\ref{eq2.1011}) to get
\begin{align}\label{tildegjkdecay}
\Vert \tilde g_{jk}\Vert_{L^\infty(\mathbb R^2)}\leq \frac{C_{jk}\Vert h_j\Vert_{4+\sigma}}{(1+r)^{2+\sigma}} \ \  \text{ in } \ \ 
 \mathbb R^2.
\end{align}
On the other hand, the existence of $\tilde g_{jk}$ follows from Fredholm alternative theorem since Lemma \ref{nondegeneracy1} implies mode $k\geq 1$ of $\tilde g_{jk}$-equation in (\ref{fourierexpansion}) does not admit any nontrivial kernel.

For the $w_{jk}$-equation with $k\geq 1$ in (\ref{fourierexpansion}), we first consider mode $1$ and  focus on the following equation
\begin{equation}\label{psimode1}
\left\{
\begin{aligned}
\frac{d^2 w_{11}}{dr^2}+\frac{1}{r}\frac{d w_{11}}{dr}-\frac{1}{r^2} w_{11}+b_{11}U_{1} w_{11}+b_{12}U_{2} w_{21}=f_{11},\\
\frac{d^2 w_{21}}{dr^2}+\frac{1}{r}\frac{d w_{21}}{dr}-\frac{1}{r^2} w_{21}+b_{21}U_{1} w_{11}+b_{22}U_{2} w_{21}=f_{21},
\end{aligned} 
\right.
\end{equation}
where 
\begin{align}\label{175fj1def}
f_{j1}:=-b_{j1}\tilde g_{11}-b_{j2}\tilde g_{21},~j=1,2.
\end{align}
With the aid of Lemma \ref{lemm13adjoint}, we find (\ref{psimode1}) only admits one bounded kernel $(\partial_r\log U_{1},\partial_r\log U_{2})^T$. Next, we shall show the existence of $(w_{11},w_{21})$ to (\ref{psimode1}).  To this end, we follow the argument shown in the proof of Lemma 2.3 in \cite{ao2016non} and define
\begin{align}
X_{\alpha}:=\bigg\{\boldsymbol{u}\in L^2_{\text{loc}}(\mathbb R^2)\times L^2_{\text{loc}}(\mathbb R^2)\big|\int_{\mathbb R^2}(1+r^{2+\alpha})|\boldsymbol{u}|^2\,dx<+\infty\bigg\},
\end{align}
\begin{align}
Y_{\alpha}:=\bigg\{\boldsymbol{u}\in W^{2,2}_{\text{loc}}(\mathbb R^2)\times W^{2,2}_{\text{loc}}(\mathbb R^2)\bigg|\int_{\mathbb R^2}(1+r^{2+\alpha})|\Delta \boldsymbol{u}|^2+\frac{|\boldsymbol{u}|^2}{(1+r^{2+\alpha})}\,dx<+\infty\bigg\}
\end{align}
for some $\alpha>0$.  Moreover, we denote
\begin{align}\label{diagsecond}
\mathcal L_1=\text{diag}{\bigg(\Delta_r-\frac{1}{r^2},\Delta_r-\frac{1}{r^2}\bigg)}+\boldsymbol{A},
\end{align}
with $\boldsymbol{A}$ defined by (\ref{boldsymbolA2025}) and consider 
\begin{align}
\mathcal L_1: Y_{\alpha}\rightarrow X_{\alpha}.
\end{align}
As shown in \cite{ao2016non}, one finds $\mathcal L_1$ is a bounded linear operator and has a closed range in $X_{\alpha}$ for $\alpha\in(0,\frac{1}{2})$.  It follows that $X_{\alpha}$ can be decomposed as
\begin{align}
X_{\alpha}=\text{Im}\mathcal L_1\oplus (\text{Im} \mathcal L_1)^{\perp}.
\end{align}
As a consequence, let $\boldsymbol{\phi}\in (\text{Im}\mathcal L_1)^{\perp}$,  then we have $(\mathcal L_1[{\boldsymbol{W}}],\boldsymbol{\phi})_{X_{\alpha}}=0$, $\forall \boldsymbol{W}\in Y_{\alpha}$, or equivalently, 
\begin{align}
(\mathcal L_1[\boldsymbol{W}],\boldsymbol{\varphi})_{L^2(\mathbb R^2)}=0,
\end{align}
where $\boldsymbol{\varphi}=(1+r^{2+\alpha})\boldsymbol{W}$.  Thus, $\mathcal L_1^*\boldsymbol{\varphi}=0$ in $\mathbb R^2$ with
$$\int_{\mathbb R^2}\frac{|\boldsymbol{\varphi}|^2(x)}{1+|x|^{2+\alpha}}dx<+\infty.$$
Then we apply Green's formula to get
\begin{align*}
|\boldsymbol{\varphi}(z)|=O(1+\log|z|).
\end{align*}
With this, by using Lemma \ref{lemm13adjoint}, we have $\boldsymbol{\varphi}\in\text{span}\{(\partial_r\log U^{1},\partial_r\log U^{2})^T\}.$   Thus, $$(\text{Im}\mathcal L_1)^{\perp}\subseteq \text{span}\{(\partial_r\log U_{1},\partial_r\log U_{2})^T\}.$$  In addition, we use the integration by parts and the fact that $\partial_r\log U_{j}\rightarrow 0$ as $r\rightarrow+\infty$ to get 
$$\text{span}\{(\partial_r\log U^{1},\partial_r\log U^{2})^T\}\subseteq (\text{Im}\mathcal L_1)^{\perp}.$$  Therefore, we obtain
\begin{align}
(\text{Im}\mathcal L_1)=\text{span}\{(\partial_r\log U^{1},\partial_r\log U^{2})^T\}^{\perp}.
\end{align}
Next, we claim that 
\begin{align}\label{weclaimf1dotzstar}
\int_{\mathbb R^2} ({\boldsymbol{f}}_{1}\cdot \boldsymbol{Z^*})dy=0,
\end{align}
where $\boldsymbol{Z^*}:=(\partial_r\log U^{1},\partial_r\log U^{2})^T$ given in (\ref{2024114logtransUup}) and ${\boldsymbol{f}}_{1}:=(f_{11},f_{21})^T$ defined in (\ref{175fj1def}). Indeed, by testing $y_i$, $i=1,2$ against the $g_j$-equation in (\ref{innerproblem}), one has
 \begin{equation}\label{lefthandsidebefore2024104}
 \int_{\R^2}\nabla \cdot (U_j \nabla g_j)y_i d y = \int_{\R^2}h_jy_id y = 0.
\end{equation}
Moreover, the left hand side of (\ref{lefthandsidebefore2024104}) can be written as
 \begin{equation}\label{3177}
    \int_{\R^2} \nabla \cdot(U_{j} \nabla g_j)y_i d y = - \int_{\R^2}U_{j}\nabla g_j\cdot e_i\d y  = \int_{\R^2}g_jU_{j}\nabla \log U_{j}\cdot  e_i\d y,
 \end{equation}
 where $e_1=(1,0)$ and $e_2=(0,1)$.  For $i =1$, we further calculate to get 
  \begin{equation}\label{eq2.13}
  \begin{split}
   \int_{\R^2}g_jU_{j}\nabla \log U_{j}\cdot e_1 dy &= \int_{0}^{2\pi}\int_0^{\infty}\sum_{j = 1}^{\infty} g_{jk}(r)e^{ik\theta} U_{j}(r) 
      (\p_r \log U_j)(r) r\cos \theta d r d\theta \\
      & = \pi\int_{0}^{\infty}g_{j1}(r)U_{j}(r)(\p_r \log U_j)(r)rd r.
   \end{split}
  \end{equation}
Collecting (\ref{lefthandsidebefore2024104}), (\ref{3177}) and (\ref{eq2.13}), one obtains 
  $$\int_{0}^{\infty}g_{j1}(r)U_{j}(r)(\p_r \log U_j)rd r=0.$$
Then we invoking the definition of ${\textbf{Z}}^*$ in Lemma \ref{lemm13adjoint} and \eqref{175fj1def} to finish the proof of our claim.
Thanks to (\ref{weclaimf1dotzstar}), we apply Fredholm alternative theorem to get that there exists a solution $\boldsymbol{W}_1\in Y_{\alpha}$ to problem (\ref{psimode1}).

Next, we derive the estimate of $\boldsymbol{W}_1.$ 
 Define 
\begin{align*}
 z_{j1}=\bar C_{j1}\Vert h_j\Vert_{4+\sigma} (1+r)^{\sigma}  +\delta r, ~j=1,2,
\end{align*}
and
\begin{align}
 \bar{\mathcal L_{11}}[z]=-\frac{d^2}{dr^2}z-\frac{1}{r}\frac{d}{dr}z +  \frac{1}{r^2}z - b_{11}U_{1}z- b_{12}U_{2} w_{21},\\
  \bar{\mathcal L_{21}}[z]=-\frac{d^2}{dr^2}z-\frac{1}{r}\frac{d}{dr}z + \frac{1}{r^2}z - b_{21}U_{1}w_{11} - b_{22}U_{2}z,
\end{align}
where $\bar C_{j1}>0$ are some large constants and $\delta>0$ is a small constant.
Then
we compute to get
\begin{align}
 \bar{\mathcal L_{j1}}[z_{j1}]+f_{j1}=&\bar C_{j1}\Vert h_j\Vert_{4+\sigma}(1+r)^{\sigma-2}\bigg[\sigma(1-\sigma)-\frac{1}{r}\sigma(1+r)+\frac{1}{r^2}(1+r)^2-b_{jj}U_{j}(1+r)^2\bigg]\nonumber\\
 &-2b_{ij,i\not=j}U_{i} w_{i1}  + f_{j1} - b_{11}\delta U_{1}r.
\end{align}
Noting that $(w_{11},w_{21})^T\in X_{\alpha}$ and $U_j \sim -\frac{m_j}{ r}$ for $r$ large and $m_j>2$, we choose $\bar C_{j1}>0$ and $R_j>0$ large enough to obtain
\begin{equation*}
\bar{\mathcal L_{j1}}[z_{j1}]+f_{j1}>0,\text{ for }r>R_j.
\end{equation*}
 With the fixed $R_{j}>0$, we further set a large constant $\bar C_{j1}>0$ to get
  \begin{equation}\label{3111}
      {\bar C_{j1}\|h_j\|_{4 + \sigma}}{(1 + R)^{ \sigma}} - \max_{y \in B_R(0)}w_{j1} > 0,
   \end{equation}
   where $w_{j1}$ is bounded in $B_R(0)$.  By using the maximum principle on annulus $B_{\tilde R_j}(0)\backslash B_{R_j}(0)$, we have
 \begin{equation}\label{eq2.101} 
 w_{j1} \le {\bar C_{j1}\|h_j\|_{4 + \sigma}}{(1 + r)^{ \sigma}},
 \end{equation}
where we let $\tilde R_j\rightarrow +\infty$ then $\delta\rightarrow 0$.
Similarly, we apply the maximum principle into $-w_{j1}$ and compute to get
 \begin{equation}\label{eq2.1012} 
 |w_{j1} |\le {\bar C_{j1}\|h_j\|_{4 + \sigma}}{(1 + r)^{ \sigma}},
 \end{equation}
 where \eqref{eq2.101} was used.
 
  It remains to analyze mode $k\geq 2$ of the linearization of the $w_j$-equations with $j=1,2$ shown in \eqref{innerproblem}.  As stated in Lemma \ref{lemm13adjoint}, there is not any nontrivial kernel to the mode-$k$ equations with $k\geq 2$, which satisfies $w_j\leq C(1+|z|^{\tau})$ for some $\tau\in[0,1).$  Similarly as above, we consider 
  \begin{align}
  \mathcal L_k: Y_{\alpha}\rightarrow X_{\alpha},
  \end{align}
  where 
  \begin{align}\label{diagkth}
\mathcal L_k:=\text{diag}{\bigg(\Delta_r-\frac{k^2}{r^2},\Delta_r-\frac{k^2}{r^2}\bigg)}+\boldsymbol{ A}
\end{align}
with $\boldsymbol{A}$ defined by (\ref{boldsymbolA2025}).
It can be shown that $\mathcal L_k$ is a bounded linear operator and has a closed range in $X_{\alpha}$ for $\alpha\in(0,\frac{1}{2}).$  Thus, $X_{\alpha}$ can be decomposed as
  \begin{align}
  X_{\alpha}=\text{Im} \mathcal L_k\oplus (\text{Im}\mathcal L_k)^{\perp}.
  \end{align}
  Similarly, we prove that $(\text{Im}\mathcal L_k)^{\perp}= \emptyset$ by using Lemma \ref{lemm13adjoint}.  Then we apply the Fredholm alternative theorem to obtain there exists a solution ${\boldsymbol{W}}_k:=(w_{1k}, w_{2k})^T$ satisfying
  \begin{equation}\label{psimodek}
\left\{ 
\begin{aligned}
\frac{d^2 w_{1k}}{dr^2}+\frac{1}{r}\frac{d w_{1k}}{dr}-\frac{k^2}{r^2} w_{1k}+b_{11}U_{1} w_{1k}+b_{12}U_{2} w_{2k}=f_{1k},\\
\frac{d^2 w_{2k}}{dr^2}+\frac{1}{r}\frac{d w_{2k}}{dr}-\frac{k^2}{r^2} w_{2k}+b_{21}U_{1} w_{1k}+b_{22}U_{2} w_{2k}=f_{2k}.
\end{aligned}
\right.
\end{equation}
We next establish the estimate of ${\boldsymbol{W}}_k\in Y_{\alpha}.$  To this end, we define
  \begin{equation}
  \bar {\mathcal L}_{jk}[z]=-\frac{d^2}{dr^2}z-\frac{1}{r}\frac{d}{dr} + \frac{k^2}{r^2}z - b_{jj}U_{j}z - b_{i\not=j}U_{i}w_{ik},~j=1,2,
  \end{equation}
  and
  \begin{align}
  z_{jk}:=\bar C_{jk}\Vert h_j\Vert_{4+\sigma}(1+r)^{\sigma}  + \delta_k r,
  \end{align}
  where constant $\sigma>0$, $\delta_k>0$ are small and $\bar C_{jk}>0$ is large. 
  By the direct computation, we use the maximum principle, choose $\bar C_{jk}>0$ large enough and take $\delta_k\rightarrow 0$ such that 
  \begin{align}
  |w_{jk}|\leq \bar C_{jk}\Vert h_j\Vert_{4+\sigma}(1+r)^{\sigma},
  \end{align}
  where $\sigma>0$ is sufficiently small.  The existence of $(w_{1k},w_{2k})^T$ to (\ref{psimodek}) directly follows from the invertibility of $\mathcal L_{k}$. 

  Recall that for $j=1,2$ and $k\in\mathbb N,$ 
\begin{align}
\phi_{jk}=\tilde g_{jk}+U_{j}\psi_{jk}.
\end{align}
Thus, we have there exist $\phi_{jk}$ satisfying
\begin{align}
\vert \phi_{jk}\vert \leq C_{jk}\frac{1}{(1+r)^{2+\sigma}},
\end{align}
where $\sigma\in(0,1)$ small.  This completes the proof of our lemma.
\end{proof}

In Lemma \ref{lemma31}, we establish the existence and a-priori estimate of $(\varphi_1,\varphi_2)$  to inner problem (\ref{innerproblem}), which corresponds to the linearization around the interior spots to (\ref{ss}).  Whereas, if the center is at the boundary $\partial\Omega,$ we must solve the inner problem (\ref{innerproblem}) in the half space $\R^2_{+} = \{(y_1, y_2) \in \R^2:y_2 \ge 0\}$ rather than $\R^2.$  To this end, we define norm $\Vert \cdot\Vert_{\nu,H}$ as 
$$\Vert h\Vert_{\nu,H}:=\sup_{y\in\R^2_+}|h|(1+|y|)^{\nu},~\nu>0,$$
and develop the following solvability results.
 
\begin{Lemma}\label{lemma32}
Given any function $h_j$, $j=1,2$ and $\beta_j(x)$ satisfying
    \begin{equation}\label{331}
       \int_{\R^2_{+}}h_j dy  - \int_{\p \R^2_{+}} \beta_j dS_y= 0,  \ \   \int_{\R^2_+}h_jy_1dy  - \int_{\p\R^2_+}\beta_j y_1dS_y  = 0,
    \end{equation}
    and  $\|h_j\|_{4 + \sigma, H} < \infty$ with $\sigma\in(0,1)$, we have the problem
    \begin{equation}\label{332}
  \begin{cases}
    \ds L^{\text{inn}}_k[\varphi_1,\varphi_2] =(h_1,h_2)^T\ \ &\text{in} \ \ \R^2_{+},\\
    \ds U_{j} \frac{\p g_j}{\p \boldsymbol{\nu}} = \beta_j(x)\ \ &\text{on} \ \  \p \R^2_{+},~j=1,2
  \end{cases}
\end{equation}
    admits a solution $(\varphi_1,\varphi_2)$ satisfying the following estimate: 
      \begin{equation}\label{333}
        \|\varphi_j\|_{2 + \sigma, H} \le C_j \|h_j\|_{4 + \sigma, H},
      \end{equation}
     where $C_j>0$ is a constant and $g_j=\frac{\varphi_j}{U_{j}}- w_j$.  Moreover, $\boldsymbol{\varphi}$ satisfies $(\varphi_1,\varphi_2)^T =  \hat{\mathcal T_p}[h_1,h_2]$, where $\hat{ \mathcal T_p}[h_1,h_2]$ is defined by a linear operator.   
\end{Lemma}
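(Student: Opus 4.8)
The plan is to reduce the half-space problem \eqref{332} to the whole-space problem already solved in Lemma \ref{lemma31} by an even reflection across the line $\{y_2 = 0\}$, exploiting the fact that $U_1$ and $U_2$ are radial, so that the inner operator $L^{\text{inn}}_k$, and the coupled divergence/linearized-Liouville system \eqref{innerproblem} as a whole, commute with the reflection $\mathcal{R}:(y_1,y_2)\mapsto(y_1,-y_2)$. First I would extend the data: set $\tilde h_j(y):=h_j(\mathcal{R}y)$ for $y_2<0$ so that $\tilde h_j$ is even in $y_2$, and observe that after even extension of the unknown the Neumann-type datum $\beta_j$ on $\partial\mathbb{R}^2_+$ turns into a source supported on the line. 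Indeed, writing $\tilde g_j(y_1,y_2)=g_j(y_1,|y_2|)$, the jump of $\partial_{y_2}\tilde g_j$ across $\{y_2=0\}$ is $2\partial_{y_2}g_j|_{y_2=0^+}$, so $\tilde g_j$ solves $\nabla_y\cdot(U_j\nabla\tilde g_j)=\tilde h_j-2\beta_j\,\delta_{\{y_2=0\}}$ in $\mathcal{D}'(\mathbb{R}^2)$ (the sign fixed by the orientation $\boldsymbol\nu=(0,-1)$), while the even extensions of $w_1,w_2$ still satisfy the coupled linearized Liouville equations and their homogeneous Neumann condition passes to homogeneous Neumann on $\{y_2=0\}$, hence they extend across it. In this way \eqref{332} becomes a whole-space system of exactly the type treated in Lemma \ref{lemma31}, with right-hand side $\mathbf{h}^{\sharp}:=(\tilde h_1-2\beta_1\delta_{\{y_2=0\}},\,\tilde h_2-2\beta_2\delta_{\{y_2=0\}})$.

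Next I would verify that $\mathbf{h}^{\sharp}$ meets the solvability conditions \eqref{innercon}. The even symmetry of $\mathbf{h}^{\sharp}$ makes it automatically orthogonal to the odd kernel elements, namely the $y_2$-translation direction $\partial_2\log U_j$ of the divergence operator and $(\partial_2\log U^1,\partial_2\log U^2)$ of $\mathcal{L}^*$, so those conditions are free; and the $y_2$-moment of $\mathbf{h}^{\sharp}$ vanishes by parity, the line term contributing nothing since $y_2\equiv 0$ there. The remaining conditions, orthogonality to the constants and to $y_1$, read $\int_{\mathbb{R}^2}\tilde h_j\,dy-2\int_{\{y_2=0\}}\beta_j\,dS_y=0$ and $\int_{\mathbb{R}^2}\tilde h_j\,y_1\,dy-2\int_{\{y_2=0\}}\beta_j\,y_1\,dS_y=0$, which upon folding back to $\mathbb{R}^2_+$ are precisely the hypotheses \eqref{331}. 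Thus Lemma \ref{lemma31} applies — once it is upgraded to allow a line-supported forcing, which enters only the mode-$0$ and mode-$1$ radial ODEs of \eqref{fourierexpansion} as an $L^1$ term with the same tail behaviour already handled there — and produces an even solution $(\tilde\varphi_1,\tilde\varphi_2)$ to the extended system; one obtains this evenness either from the uniqueness of the solution orthogonal to the excluded kernel directions, or, more concretely, by running the Fourier-mode construction of Lemma \ref{lemma31} keeping only the modes compatible with the symmetry. Restricting $(\tilde\varphi_1,\tilde\varphi_2)$ to $\mathbb{R}^2_+$ then solves \eqref{332}: the equation holds in the open half-space, and the evenness of $\tilde g_j$ together with the computed jump of $\partial_{y_2}\tilde g_j$ encodes exactly $U_j\partial_{\boldsymbol\nu}g_j=\beta_j$ on the boundary.

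The estimate \eqref{333} is then inherited directly from the whole-space bound \eqref{innerprori} of Lemma \ref{lemma31} by restricting the weighted sup-norm to $\mathbb{R}^2_+$ — one should in addition keep track of the contribution of $\beta_j$ through a suitable weighted norm on the line, obtaining $\|\varphi_j\|_{2+\sigma,H}\le C_j(\|h_j\|_{4+\sigma,H}+\|\beta_j\|_{\ast})$, which in the applications reduces to \eqref{333} because $\beta_j$ there is of size controlled by $\|h_j\|_{4+\sigma,H}$. Finally, since reflection, even extension, and restriction are bounded linear maps, and $\boldsymbol{\mathcal{T}_{in}}$ from Lemma \ref{lemma31} is a bounded linear operator, their composition gives the bounded linear operator $\hat{\mathcal{T}_p}$ with $(\varphi_1,\varphi_2)^T=\hat{\mathcal{T}_p}[h_1,h_2]$ depending continuously on the data.

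The main obstacle I anticipate is the careful treatment of the line-supported forcing coming from $\beta_j$: one must check that inserting a measure concentrated on $\{y_2=0\}$ into the mode-$0$ and mode-$1$ radial equations of \eqref{fourierexpansion} does not spoil the barrier/maximum-principle estimates \eqref{eq2.10}--\eqref{eq2.1012}, and that the Fredholm alternative in the weighted spaces $X_\alpha,Y_\alpha$ still applies with such a right-hand side. Equivalently — and this is probably the cleaner route to push through — one first subtracts an explicit particular solution built from the fundamental solution of $\nabla_y\cdot(U_j\nabla\,\cdot\,)$ tested against the line density, reducing \eqref{332} to the case $\beta_j\equiv 0$ where Lemma \ref{lemma31} applies verbatim after reflection; making this subtraction quantitative in the $\|\cdot\|_{\nu,H}$ norms, so that the resulting estimate is uniform as $\varepsilon\to 0$, is where the real work lies.
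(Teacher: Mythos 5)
Your primary route --- reflect evenly and absorb the Neumann datum as the line-supported measure $-2\beta_j\delta_{\{y_2=0\}}$ --- has a genuine gap. Lemma \ref{lemma31} is stated and proved only for right-hand sides with $\Vert h_j\Vert_{4+\sigma}<\infty$, and every step of its proof (the Fourier decomposition \eqref{fourierexpansion}, the barrier functions $(1+r)^{-2-\sigma}$ with the maximum principle, the Fredholm argument in $X_\alpha$, $Y_\alpha$) uses that pointwise weighted decay; a measure on a line is not in this class, so the lemma does not apply after a routine ``upgrade''. Moreover your claim that the line term enters only the mode-$0$ and mode-$1$ radial ODEs is not correct: the angular Fourier coefficients of a measure concentrated on $\{\theta=0,\pi\}$ with density $\beta_j$ are of size $\sim \frac{1}{r}\beta_j(\pm r)\bigl(1+(-1)^k\bigr)$, so it feeds every even mode, and since the lemma imposes no decay norm on $\beta_j$ the barrier estimates \eqref{eq2.10}--\eqref{eq2.1012} cannot be reproduced for these modes. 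You yourself flag this as ``where the real work lies''; but that work is exactly what the proof must supply, so as written the main argument is incomplete.

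The repair is your own fallback, and it is what the paper does --- except that it is simpler than you suggest: no fundamental solution of $\nabla\cdot(U_j\nabla\,\cdot\,)$ is needed. One first chooses any pair $(\tilde\varphi_j,\tilde w_j)$ with $\partial_{\boldsymbol{\nu}}\tilde\varphi_j-U_j\partial_{\boldsymbol{\nu}}\tilde w_j=\beta_j$ on $\partial\R^2_+$ and $\Vert\tilde\varphi_j\Vert_{2+\sigma,H}\le C\Vert h_j\Vert_{4+\sigma,H}$, sets $\vartheta_j=\tilde\varphi_j/U_j-\tilde w_j$ normalized so that $\int_{\R^2_+}U_j\nabla\vartheta_j\cdot\boldsymbol{e}_1\,dy=0$, and rewrites \eqref{332} for $\bar g_j=g_j-\vartheta_j$: this has homogeneous Neumann flux and the bona fide source $h_j-\nabla\cdot(U_j\nabla\vartheta_j)$. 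Even reflection then yields a whole-space problem with a genuine function on the right; the mass and $y_1$-moment conditions of \eqref{innercon} follow from the divergence theorem, the boundary terms producing exactly $\int_{\partial\R^2_+}\beta_j\,dS_y$ and $\int_{\partial\R^2_+}\beta_j y_1\,dS_y$, which cancel by hypothesis \eqref{331} (the normalization of $\vartheta_j$ removing the extra bulk term in the moment computation), and the $y_2$-moment is free by parity, as you observed. Lemma \ref{lemma31} then applies verbatim and gives \eqref{333} upon restriction; your remark that the clean statement should carry a norm of $\beta_j$, absorbed because $\beta_j$ is controlled by the data in the application, is a fair reading of how the bound is used, but the delta-measure formulation should be replaced by this subtraction step for the proof to close.
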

\begin{proof} For any given $(\beta_1, \beta_2)$ defined on $\p\R^2_{+} \times \p\R^2_{+}$, we have there exists a function pair $(\tilde \varphi_{j}, \tilde w_{j})$ such that 
 \begin{equation*}
  \frac{\p \tilde\varphi_j}{\p \boldsymbol{\nu}} - U_{j}\frac{\p \tilde w_{j}}{\p \boldsymbol{\nu}} = \beta_j \ \  \text{on}  \ \  \p \R^2_{+},
 \end{equation*}
 where $\|\tilde \varphi\|_{2 + \sigma, H} \le C \|h_j\|_{4 + \sigma, H}$, $j=1,2$.  Then, we define $\vartheta_j := \frac{\varphi_{j}}{U_{j}} - w_{j}$ and find $\vartheta_j$ satisfies
\begin{equation}\label{335}
    \int_{\R^2_{+}}U_{j}\nabla \vartheta_{j} \cdot  {\boldsymbol{e}}_1 dy  = 0,
  \end{equation}
  where ${\boldsymbol{e}}_1=(1,0)$.  Now, the problem (\ref{332}) is transformed into the following form
  \begin{equation}\label{336}
  \begin{cases}
    \ds \nabla\cdot (U_{j} \nabla \bar{g}_j) = h_j - \nabla \cdot (U_{j}\nabla \vartheta_j) \ \ &\text{in} \ \ \R^2_{+}\\
    \ds U_{j} \frac{\p \bar{g}_j}{\p \boldsymbol{\nu}} = 0 \ \ &\text{on} \ \  \p \R^2_{+},~j=1,2,
  \end{cases}
\end{equation}
where $\bar{g}_j: = g_j - \vartheta_j$.  Define the solution of (\ref{336}) as  $(\phi_\vartheta, \psi_\vartheta)$ and
\begin{equation*}
   \bar{g}_{j} :=
   \begin{cases}
      \bar{g}_j(y_1, y_2) \ \ &\text{for} \ \  y_2 \ge 0; \\
      \bar{g}_j(y_1, -y_2) \ \ &\text{for} \ \  y_2 < 0,
   \end{cases}
\end{equation*}
then we have the equation in (\ref{336}) is evenly extended into the whole space, which is
 \begin{equation*}
    \nabla\cdot(U_{j} \nabla \bar{g}_{j}) = \tilde{h}_j \ \ \text{in} \ \ \R^2,~j=1,2,
 \end{equation*}
where
 \begin{equation*}
      \tilde{h}_j(y_1, y_2)=
      \left\{
     \begin{aligned}
        & h_j(y_1, y_2) - \nabla\cdot (U_{j}\nabla \vartheta_j)(y_1, y_2) \ \   &\text{for} \ \ y_2 \ge 0, \\
        & h_j(y_1, -y  _2) - \nabla \cdot(U_{j}\nabla \vartheta_j)(y_1, -y_2) \ \  &\text{for} \ \ y_2 < 0.
     \end{aligned}
     \right.
 \end{equation*}
It is easy to check that $\|\tilde{h}_j\|_{4 + \sigma} < \infty$ due to $\|h_j\|_{4 + \sigma, H}<+\infty$.  The key step is the verification of the orthogonality condition.  To finish it , we first obtain from the property of even function that
 \begin{equation*}
    \int_{\R^2_-} h_j(y_1, -y_2) - \nabla\cdot (U_{j}\nabla \vartheta_j)(y_1, -y_2)dy = \int_{\R^2_+} h_j(y_1, y_2) - \nabla\cdot (U_{j}\nabla \vartheta_j)(y_1, y_2)dy,
 \end{equation*}
 and
 \begin{equation*}
   \int_{\R^2_-} \big[h_j(y_1, -y_2) - \nabla\cdot (U_{j}\nabla \vartheta_j)(x_1, -x_2)\big]y_1 dy= \int_{\R^2_+} \big[h_j(y_1, y_2) - \nabla\cdot (U_{j}\nabla \vartheta_j)(y_1, y_2)\big]y_1dy.
 \end{equation*}
 Then, by using condition (\ref{331}), we have from the divergence Theorem that
 \begin{equation*}
 \begin{split}
   \int_{\R^2}\tilde{h}_j dy &=  2\int_{\R^2_+} h_j- \nabla\cdot(U_{j}\nabla \vartheta_j) dy  = 2\int_{\R^2_+} h_jdy- 2\int_{\p\R^2_+} (U_{j} \nabla \vartheta_j)\cdot \boldsymbol{\nu}dS_y  \\
        & = 2\int_{\R^2_+} h_j dy - 2\int_{\p\R^2_+} U_{j} \frac{\p \vartheta_j}{\p \boldsymbol{\nu}}dS_y =  2\int_{\R^2_+} h_j dy-   2\int_{\p\R^2_+} \beta_j dS_y = 0,
   \end{split}
 \end{equation*}
 which implies the mass condition in (\ref{innercon}) holds.  For the first moment condition, we utilize the integration by parts and (\ref{335}) to get
\begin{equation}\label{338}
\begin{split}
   \int_{\R^2_+}\tilde{h}_jy_1dy &= 2 \int_{\R^2_+}[h_j - \nabla\cdot (U_{j} \nabla \vartheta_j)]y_1dy \\
          & = 2\int_{\R^2_+}h_j y_1dy - 2 \int_{\p \R^2_+} y_1U_{j0}\nabla \vartheta_j \cdot \boldsymbol{\nu} dS_y + 2\int_{\R^2_+}U_{j}\nabla \vartheta_j \cdot \boldsymbol{e}_1dy \\
          & = 2\int_{\R^2_+}h_j y_1 dy- 2 \int_{\p \R^2_+}\beta_j y_1dS_y = 0.
    \end{split}
\end{equation}
  Since $\tilde{h}_j$ is even with respect to $y_1$, we can easily obtain from (\ref{338}) that $\int_{\R^2}\tilde h_j y_1dy=0$, which completes the verification of orthogonality condition (\ref{innercon}).  Therefore, we can utilize the results shown in Lemma \ref{lemma31} to find there exists the solution $(\tilde\phi_1,\tilde\phi_2,\tilde\psi_1,\tilde\psi_2)$ to the following system:
  \begin{equation*}
   \left\{
   \begin{aligned}  
   -\Delta \tilde{\psi}_1 &= U_{1} \tilde{\psi}_1 + U_{1} \tilde{g}_1, \ \ &\text{in} \ \ \R^2, \\
   -\Delta \tilde{\psi}_2 &= U_{2} \tilde{\psi}_2 + U_{2} \tilde{g}_2, \ \ &\text{in} \ \ \R^2,\\
      - \Delta \tilde{\psi}_1 &= a_{11}\tilde{\phi}_1 +a_{12}\tilde\phi_2  \ \ &\text{in} \ \ \R^2,\\
         - \Delta \tilde{\psi}_2 &= a_{21}\tilde{\phi}_1 +a_{22}\tilde\phi_2  \ \ &\text{in} \ \ \R^2.
      \end{aligned}
      \right.
  \end{equation*}
In particular, $\tilde \phi_j$, $j=1,2$ satisfies the following estimate
    \begin{equation}\label{340}
        |\tilde{\phi}_j| \le C_j \frac{\|h_j\|_{4 + \sigma, H}}{(1 + r)^{2 + \sigma}}.
    \end{equation}
Since $\tilde \phi_j$ is even, it can be defined as the even extension of $\phi_j$.  By using (\ref{340}), we further show that $(\phi_1,\phi_2)$ is the solution of (\ref{332}) and satisfies
    \begin{equation*} 
       \|\phi_j\|_{2 + \sigma, H} = \|\tilde{\phi}_j + \phi_{j0}\|_{2 + \sigma, H} \le  \|\tilde{\phi}_j\|_{2 + \sigma, H} + \|\phi_{j0}\|_{2 + \sigma, H} \le C_j \|h_j\|_{4 + \sigma, H},
    \end{equation*}
 which completes the proof of (\ref{333}) and this Lemma.
\end{proof}

 In the next section, we focus on the outer problem and establish the outer linear theory.
\section{Outer Linear Theory}
Similarly as shown in Subsection 3.2 of \cite{kong2022existence}, we first formulate the outer operator.  
Concerning the $\varphi_j$-equations of (\ref{ss}), we define
\begin{align}\label{barLphi}
 \bar L_j[\varphi_1,\varphi_2]=-\Delta\varphi_j +\nabla\cdot(P_j\nabla  w_j)+\nabla\cdot(\varphi_j\nabla \bar Q_j),
\end{align}
where
$$P_j(y)=\sum_{k=1}^m  U_{jk}(y - \xi'_k)~\text{and}~  \bar Q_j(y) = \sum_{k =1}^{m}\Big(-m_j\log\varepsilon +  \Gamma_{jk}(y - \xi'_k)+ \hat{c}_{jk}H_j^{\varepsilon}(y,\xi_{k})\Big).$$
Here $c_{jk}$ given in (\ref{generate1}) is included in $U_{jk}$.  Then, we substitute $\Delta \bar Q_j=\va^2\bar Q_j - a_{j1}P_1 - a_{j2}P_2$ into (\ref{barLphi}) and expand it to obtain
\begin{align*}
\bar L_j[\varphi_1,\varphi_2]=&\Delta \varphi_j-\nabla \varphi_j \cdot \nabla \bar Q_j-\va^2\bar Q_j \varphi_j\\
&-\nabla P_j\cdot \nabla w_j - P_j\Delta w_j + (a_{j1}P_1+a_{j2}P_2)\phi_j:=I_{31}+I_{32}.
\end{align*}
Due to the decay property of $P_j$, we have $I_{32}$ is negligible in the outer region and $I_{31}$ dominates.  On the other hand, the leading term in the logistic source is $\va^2\lambda_j \bar u_j\phi_j$.  Combining this term with $I_{31}$, we define the outer operator as
\begin{equation}\label{outeroperator2025}
   L^{o}_j[\varphi_1,\varphi_2] = -\Delta \varphi_j +\nabla \varphi_j \cdot \nabla \bar Q_j - \va^2(\lambda_j \bar{u}_j-\bar Q_j) \phi_j\ \ \text{in}\ \ \Omega_{\va},
\end{equation}
 where $ \va y =x$ and $\va \xi_j' = \xi_j$.  Moreover, the outer norm $\|\cdot\|_{\nu, o}$, $\nu>0$ is defined as
   \begin{equation*}
      \|h_j\|_{\nu, o}: = \sup_{y \in \Omega_{\va}}\frac{ |h_j|}{ \sum\limits_{k =1}^{m}(1 + |y - \xi_k'|)^{-\nu}}.
    \end{equation*}
We next derive a-priori estimate of outer solution $(\varphi_1,\varphi_2)$ then prove its existence.  Our results are summarized as
\begin{Lemma}\label{lemma33}
Assume that $\|h_j\|_{b+2, o} < +\infty$ and $\lambda_j\bar u_j<\bar C_j,$ where $\bar C_j:=\sum\limits_{k=1}^m \hat c_{jk}C_{\Omega}$ and $C_{\Omega}$ is the positive lower bound of Green's function,
   then the problem
     \begin{equation}\label{problem1}
     \begin{cases}
         L_j^{o}[\varphi_1,\varphi_2]+ h_j=0 \ \ \ &\text{in} \ \    \Omega_{\va}, \\
        \ds\frac{\p \varphi_j}{\p \boldsymbol{\nu}} = 0  \ \ \  &\text{on} \ \ \p \Omega_{\va}
        \end{cases}
     \end{equation}
    admits the solution $(\varphi_1,\varphi_2)^T=\mathcal{T}_o[h_1,h_2]$ satisfying 
      \begin{equation}\label{345}
            \|\varphi_j\|_{b, o} \le C\|h_j\|_{b +2, o}, 
      \end{equation}
      where $b>0$ is a constant, $C>0$ is a constant and $\mathcal{T}_o[h_1,h_2]$ is a continuous
      linear mapping.
\end{Lemma}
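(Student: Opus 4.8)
The plan is to prove the statement by a barrier argument for the a priori bound and the Fredholm alternative for existence, following Subsection~3.2 of \cite{kong2022existence}. First note that the outer operators \eqref{outeroperator2025} are \emph{diagonal} — $L^o_j$ involves only $\varphi_j$ — so it suffices to treat each scalar Neumann problem separately. Write $L^o_j[\varphi_j]=-\Delta\varphi_j+\nabla\bar Q_j\cdot\nabla\varphi_j+c_j\varphi_j$ with $c_j:=\va^2(\bar Q_j-\lambda_j\bar u_j)$. The starting point is to record the behaviour of $\bar Q_j$ on $\Omega_\va$: near each $\xi_k'$ the term $-m_j\log\va$ forces $\bar Q_j\to+\infty$, while for $x=\va y$ bounded away from all $\xi_k$ the far-field formula $\Gamma_{jk}(z)=-m_j\log|z|+O(|z|^{2-\hat m})$ together with the expansion \eqref{2162025117} of $H^\va_{jk}$ lets one absorb the logarithmic singularities into the reduced-wave Green's function via $\hat c_{jk}G=\hat c_{jk}H-m_j\log|x-\xi|$, which gives $\bar Q_j(y)\ge\bar C_j+o(1)$ throughout $\Omega_\va$ because $G\ge C_\Omega>0$; hence $c_j\ge c_0\va^2>0$ with $c_0:=\bar C_j-\lambda_j\bar u_j$ precisely under the hypothesis $\lambda_j\bar u_j<\bar C_j$. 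A companion computation shows $\nabla_y\bar Q_j=-m_j\sum_k\tfrac{y-\xi_k'}{|y-\xi_k'|^2}+O(\va)+O(|y-\xi_k'|^{1-\hat m})$, so the drift points \emph{inward} toward each spot.

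For the a priori bound \eqref{345} I would use the comparison function $\bar\varphi(y):=\sum_{k=1}^m(1+|y-\xi_k'|)^{-b}$. A direct computation gives, near the $k$-th spot, $-\Delta[(1+r)^{-b}]=b(1+r)^{-b-2}(r^{-1}-b)$ and, thanks to the inward drift, $\nabla\bar Q_j\cdot\nabla[(1+r)^{-b}]\ge b\,m_j(1+r)^{-b-2}(1+o(1))$, whose sum (together with $c_j\bar\varphi\ge0$) is bounded below by a positive multiple of $(1+r)^{-b-2}$ provided $0<b<\hat m$; the cross-contributions from distinct spots and the $O(\va)$ part of the drift are of strictly lower order since the spots are $O(\va^{-1})$-separated in $\Omega_\va$. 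In the bulk, where $|y-\xi_k'|\gtrsim\va^{-1}$ for every $k$, the zeroth-order term $c_j\bar\varphi\ge c_0\va^2\sum_k(1+|y-\xi_k'|)^{-b}$ dominates $-\Delta\bar\varphi$ and the drift, which are $O(\va^{b+2})$ with coefficients $O(b)+O(b^2)$, as soon as $b$ is taken small relative to $c_0$. Hence $L^o_j[\bar\varphi]\ge\kappa\sum_k(1+|y-\xi_k'|)^{-(b+2)}$ on $\Omega_\va$ for some $\kappa=\kappa(\Omega,\lambda_j,\bar u_j,b)>0$. The inhomogeneous part of the Neumann condition on $\partial\Omega_\va$ — costing only $O(\va^{b+1})$ because the boundary sits at distance $\sim\va^{-1}$ from the interior spots — is removed by a lower-order boundary correction, as in the reduction carried out in the proof of Lemma~\ref{lemma32}. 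Since $c_j>0$ the operator $L^o_j$ obeys the maximum and Hopf principles, so comparing $\pm\varphi_j$ with $\kappa^{-1}\|h_j\|_{b+2,o}\,\bar\varphi$ yields $\|\varphi_j\|_{b,o}\le C\|h_j\|_{b+2,o}$; in particular the homogeneous problem has only the trivial solution.

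Existence is then immediate: for each fixed $\va$, problem \eqref{problem1} is a linear Neumann problem on the bounded smooth domain $\Omega_\va$ for the uniformly elliptic operator $-\Delta+\nabla\bar Q_j\cdot\nabla+c_j$ with $c_j>0$, so the Fredholm alternative applies and the trivial-kernel fact just established makes it solvable in $C^{2,\alpha}(\overline{\Omega_\va})$ for every $h_j\in L^\infty(\Omega_\va)$, in particular whenever $\|h_j\|_{b+2,o}<\infty$. Linearity and uniqueness make $(\varphi_1,\varphi_2)^T=\mathcal T_o[h_1,h_2]$ a well-defined linear map, and \eqref{345} is precisely its continuity, with constant uniform in $\va$.

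The main obstacle is the construction of the supersolution, and within it the estimate far from the spots: one must simultaneously arrange that the inward drift beats $-\Delta\bar\varphi$ near the spots (this forces $b<\hat m$ and careful control of the $O(|z|^{2-\hat m})$ tail of $\Gamma_{jk}$ and the $O(\va)$ tail of $\nabla H^\va_{jk}$), and that the \emph{small} positive coefficient $c_j\sim\va^2$ nonetheless controls the bulk. The latter is exactly where the logistic-growth hypothesis $\lambda_j\bar u_j<\bar C_j$ and the positive lower bound $G\ge C_\Omega$ of the reduced-wave Green's function are indispensable — without them $\bar Q_j-\lambda_j\bar u_j$ could change sign and the outer operator would degenerate — and it also dictates how small $b$ must be chosen. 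The remaining delicate point is the Neumann correction at boundary spots, where the half-space reflection bookkeeping of Lemma~\ref{lemma32} reappears; everything else is routine.
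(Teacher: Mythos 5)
Your overall strategy coincides with the paper's: treat \eqref{outeroperator2025} as a scalar Neumann problem (it is indeed diagonal in $\varphi_j$), get the weighted a priori bound \eqref{345} from a comparison/maximum-principle argument that uses the positivity of the zeroth-order coefficient $\va^2(\bar Q_j-\lambda_j\bar u_j)$ — which is exactly what $\lambda_j\bar u_j<\bar C_j$ together with $G\ge C_\Omega$ provides — and then obtain existence from the Fredholm alternative; the paper itself only cites Lemma 3.3 of \cite{kong2022existence} for the estimate and invokes Fredholm. The gap is in your barrier computation, in two places. First, near a spot the supersolution inequality for $(1+r)^{-b}$ reads, after multiplying by $(1+r)^{b+2}$, $\frac1r-b+\frac{1+r}{r}\,\big|r\,\partial_r\Gamma_{jk}(r)\big|>0$. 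The quantity $\big|r\,\partial_r\Gamma_{jk}(r)\big|$ increases from $0$ to $m_j$, and the profile $\Gamma_{j,\mu_{jk}}$ is fixed (its normalization is dictated by the Green's function, not at your disposal), so the drift is not yet of size $m_j/r$ at moderate $r$, while $-\Delta(1+r)^{-b}$ is already negative for $r>1/b$; for the value the lemma is actually used with in Section \ref{section5}, namely $b=2+\frac{\sigma}{2}>1$, the pure power need not be a supersolution on a fixed compact annulus. Second, at distances of order one from the spots the sign-ambiguous part of the drift (gradients of $H^{\va}_{jk}$ and of the other spots' contributions, of size $O(\va)$) produces a term of order $b\,\va(1+r)^{-b-1}$, which at $r\sim\va^{-1}$ is comparable both to the good radial drift term and to the zeroth-order term $c_0\,\va^2(1+r)^{-b}$, $c_0:=\bar C_j-\lambda_j\bar u_j$. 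Your resolution, ``take $b$ small relative to $c_0$'', proves the estimate only for $b\lesssim c_0$; this is not what the paper needs, since $b=2+\frac{\sigma}{2}$ is fixed and $c_0$ can be arbitrarily small under the stated hypothesis, and the norms $\|\cdot\|_{b,o}$ for small $b$ are too weak for the gluing estimates of Section \ref{section5}.

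Both defects are repairable without changing the strategy: flatten the comparison function on a fixed ball around each $\xi_k'$ and add there a compactly supported concave bump (its $-\Delta$ supplies the order-one positivity needed to dominate $|h_j|\le \|h_j\|_{b+2,o}$ on compact sets, and its drift term has the favorable sign because $\Gamma_{jk}$ is radially decreasing), and add a constant floor of size $M\va^{b}\|h_j\|_{b+2,o}$ so that at $O(1)$ distances the positive zeroth-order term acting on the constant absorbs the $O(\va)$ cross-drift; this gives \eqref{345} for every fixed $b\in(0,\hat m)$, in particular $b=2+\frac{\sigma}{2}$, with a constant depending on $\bar C_j-\lambda_j\bar u_j$ but uniform in $\va$. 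One further point: your claim that the Neumann defect of the barrier is harmless ``because the boundary sits at distance $\sim\va^{-1}$ from the spots'' is false for boundary spots $\xi_k\in\partial\Omega$; there the defect is only $O(\va(1+r)^{-b})$ by near-flatness of $\partial\Omega_\va$, and the boundary correction must be built as in Lemma \ref{lemma32}. The Fredholm/existence step and the linearity and continuity of $\mathcal{T}_o$ are fine once the a priori bound is in place.
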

\begin{proof} 
To show that a-priori estimates (\ref{345}) satisfied by $\varphi_1$ and $\varphi_2$ hold, we can follow the argument shown in the proof of Lemma 3.3 in \cite{kong2022existence} with some slight modification and the details are omitted.  The existence of $(\varphi_1,\varphi_2)$ immediately follows from Fredholm alternative theorem.
\end{proof}

\begin{Remark}
As shown in \cite{davila2024existence,kong2022existence}, the principal parts of outer operators in single species Keller-Segel models can be formally regarded as $6$-dimensional Laplacians.  Whereas, the principal term in $L^{\text{o}}_j[\varphi_1,\varphi_2]$, $j=1,2$ defined by \eqref{outeroperator2025} is approximated by the $m_j+2$-dimensional Laplacian with $m_j>2$ since the algebraic decay rate of cellular density $u$ is $m_j.$
\end{Remark}

Lemma \ref{lemma33} demonstrates that the outer problem (\ref{problem1}) admits the decay solution if source $(h_1,h_2)$ decays fast.  Next, we shall first employ Lemma \ref{lemma31} and Lemma to construct the multi-interior spots to (\ref{ss}) via the inner-outer gluing scheme.  The existence of multi-boundary spots can also be shown by invoking Lemma \ref{lemma32} and the detailed discussions are exhibited in Section \ref{section5}.

\section{Inner-outer Gluing Procedures}\label{section5}
This section is devoted to the construction of multiple interior spots and boundary spots.  Noting that when the locations of spots are at the boundary $\partial\Omega$, the inner problem near each spot has to be solved in the half space $\R^2_{+}$ and it is necessary to use the straighten transformation, which may cause some difficulty.  Thus, we divide our following discussions into two cases: construction of multi-interior spots and construction of multi-boundary spots. 


\subsection{Construction of Interior Spots}\label{sub51}
To begin with, we give some preliminary notation and definitions.  Recall that  
the inner and outer norms $\|\cdot\|_{\nu, k}$ and $\|\cdot\|_{b, o}$ are given by 
  \begin{equation}\label{eq7.2}
     \|h\|_{\nu, k} := \sup_{y \in \mathbb{R}^2} |h(y)|(1 + |y - \xi'_k|)^{\nu}  \ \ \text{and} \ \   
     \|h\|_{\nu, o}:= \sup_{x \in \Omega} \frac{|h|}{\sum_{k =1}^m (1 + |x - \xi'_k|)^{-\nu}}.
  \end{equation}
In addition, we denote $\delta'= \inf_{j \neq k}|\xi_j - \xi_k|$ and the cut-off functions as $\eta_j := \eta(|y - \xi'_k|) > 0$, where
  \begin{equation}\label{eq7.3}
      \eta(r) : = 
        \begin{cases}
            1 , \ \   r \le \frac{\delta}{\varepsilon}  ;\\
            0, \ \   r > \frac{2\delta}{\varepsilon}
        \end{cases}
  \end{equation}
and $\delta$ is a fixed number. 

With the aid of \eqref{eq7.2} and \eqref{eq7.3}, we now decompose $(\phi_1, \phi_2)$ and $(w_1, w_2)$ into the following form 
\begin{equation}\label{eq7.4}
\begin{split}
   \varphi_j = \sum_{k =1}^m \varepsilon^{\gamma_{1}}c_{jk}\varphi_{jk}(y)\eta_k +  \varepsilon^{\gamma_{2}}\varphi_j^o, \ \ \ \  &w_{jk} = (-\Delta)^{-1}(b_{j1}\varphi_{1k} + b_{j2} \varphi_{2k})  \\
    w_j^o = \va^{\gamma_{2}}(-\Delta  + \varepsilon^2)^{-1}(a_{j1}\varphi^o_1 + a_{j2}\varphi^o_2), \ \ \ \  &  w'_{jk} = (- \Delta + \varepsilon^2)^{-1}(b_{j1}\varphi_{1k}\eta_k + b_{j2} \varphi_{2k}\eta_k),
    \end{split}
 \end{equation}
where $\gamma_{1} > 0$ and $\gamma_{2} > 0$ will determined later on. In light of (\ref{2442025116}) and (\ref{inn-out-before}), we find for $j=1,2,$  
\begin{equation*}
   L_j(\varphi_j) = \varepsilon^{\gamma_{1}}\sum_{k =1}^m L_j[\varphi_{jk} \eta_k] + \varepsilon^{\gamma_{2}}L_j[\varphi^o_j] = \varepsilon^2 \sum_{k =1}^5 I_{jk}(\mathbf P) + \varepsilon^2 I_6(\boldsymbol{\varphi}, \mathbf P) + \varepsilon^2 I_7(\boldsymbol{\varphi}, \mathbf P).
\end{equation*}
Invoking the definitions of $L_{jk}^{\text{inn}}$ and $L_j^{o}$ given by (\ref{innerlphi}) and (\ref{outeroperator2025}), respectively, we have 
\begin{equation}\label{eq7.5}
\begin{split}
  & \varepsilon^{\gamma_{1}}\sum_{k =1}^m c_{jk} L^{\text{inn}}_{jk}[\varphi_{jk}\eta_k] + \varepsilon^{\gamma_{2}} L^o_j[\varphi_1^o,\varphi_2^o]\\
   = &\varepsilon^2 h_j(\boldsymbol{\varphi}, \mathbf P)-\va^{\gamma_2}\nabla\cdot(P_j\nabla \bar w_j^o)+\va^{\gamma_2}\Delta \bar Q_j\varphi^o_j-\va^{2+\gamma_2}\lambda_j\bar u_j\varphi^o_j\\
  &+\va^{1+\gamma_2}\sum_{k=1}^m\nabla_y\cdot(\varphi_j^o\nabla_xH^{\va}_{jk}(\va y,\xi_{k}))+\va^{\gamma_1+1}\bigg(\sum_{k=1}^m\varphi_{jk}\nabla\eta_{k}\bigg)\cdot\bigg(\sum_{j=1}^m\nabla H_{jk}^{\va}(\va y,\xi_k)\bigg)\\
  &-\va^{\gamma_1}\sum_{k=1}^m\sum_{l\not=k}[\nabla\cdot(U_{jl}\nabla\bar w_{jk})+\nabla\cdot(\varphi_{jk}\nabla\Gamma_{jl})\eta_k]-\va^{\gamma_1}\sum_{k=1}^m\sum_{l\not=k}\varphi_{jk}\nabla\eta_{k}\cdot\nabla\Gamma_{jl},
     \end{split}
\end{equation}
where $h_{j}(\boldsymbol{\varphi},\mathbf {P}):=\sum_{k=1}^7I_{jk}(\varphi_{11},\cdots,\varphi_{1m},\varphi_{21},\cdots,\varphi_{2m},\varphi^o,\textbf{P})$ and $\bar w_j^o(y)=w_j^o(\va y),$  $\bar w'_{jk}(y)=w_{jk}(\va y)$. 
Define 
\begin{align}\label{Ferror2025116}
F_j(\boldsymbol{\varphi},\textbf{P}):=&\varepsilon^2 h_j(\boldsymbol{\varphi}, \mathbf P)-\va^{\gamma_2}\nabla\cdot(P_j\nabla \bar w_j^o)+\va^{\gamma_2}P_j\varphi^o_j-\va^{2+\gamma_2}\lambda_j\bar u_j\varphi^o_j\nonumber\\
  &+\va^{1+\gamma_2}\sum_{k=1}^m\nabla_y\cdot(\varphi_j^o\nabla_xH^{\va}_{jk}(\va y,\xi_{k}))\nonumber\\
  &-\va^{\gamma_1}\sum_{k=1}^m\sum_{l\not=k}[\nabla\cdot(U_{jl}\nabla\bar\psi_{jk})+\nabla\cdot(\varphi_{jk}\nabla\Gamma_{jl})\eta_k].
\end{align}

Moreover, a simple computation shows that 
\begin{equation}\label{eq7.6}
\begin{split}
   L^{\text{inn}}_{jk}[\varphi_{jk}\eta_k] = & \eta_k L^{\text{inn}}_{jk}[\varphi_{jk}] - 2\nabla \varphi_{jk} \cdot \nabla \eta_k - \varphi_{jk}\Delta \eta_k +  \varphi_{jk} \cdot \nabla \Gamma_{jk}(y-\xi'_k) \nabla\eta_k  \\
     &+ \nabla \cdot (U_{jk}\nabla w'_{jk}) - \nabla \cdot (U_{jk}\nabla w_{jk}) \eta_k.
   \end{split}
\end{equation}
 Combining \eqref{Ferror2025116} with \eqref{eq7.6}, one denotes
 \begin{align}\label{FJK2025116}
 F_{jk}(\boldsymbol{\varphi},\mathbf P):=F_j(\boldsymbol{\varphi},\textbf{P})\eta_k - \va^{\gamma_1}\nabla \cdot (U_{jk}\nabla w'_{jk}) + \va^{\gamma_1}\nabla \cdot (U_{jk}\nabla w_{jk}) \eta_k.
 \end{align}
Moreover, we define $J_j$ as  
 \begin{equation}\label{eq7.8}
 \begin{split}
     J_j(\boldsymbol{\varphi}, \mathbf{P}) =& F_j(\boldsymbol{\varphi}, \mathbf{P})(1 - \sum_{k =1}^m \eta_k^2)+\va^{\gamma_1+1}\bigg(\sum_{k=1}^m\varphi_{jk}\nabla\eta_{k}\bigg)\cdot\bigg(\sum_{j=1}^m\nabla H_{jk}^{\va}(\va y,\xi_k)\bigg)  \\
     & +\va^{\gamma_1}\sum_{k=1}^m\big[2\nabla \varphi_{jk} \cdot \nabla \eta_k+\varphi_{jk}\Delta \eta_k -  \varphi_{jk} \cdot \nabla \Gamma_{jk}(y-\xi'_k) \nabla\eta_k \big]\\
     &-\va^{\gamma_1}\sum_{k=1}^m\sum_{l\not=k}\varphi_{jk}\nabla\eta_{k}\cdot\nabla\Gamma_{jl}.
\end{split}
 \end{equation}
Collecting (\ref{eq7.5}), (\ref{FJK2025116}) and \eqref{eq7.8}, we formulate the system satisfied by $\varphi_{jk}$ and $\varphi^o_j$ as
\begin{equation}\label{eq7.9}
\left\{
\begin{aligned}
   L^{\text{inn}}_{jk}[\varphi_{1k},\varphi_{2k}] &= \varepsilon^{-\gamma_{1}}F_{jk}(\boldsymbol{\varphi}, \textbf P), \ \ &&\text{in} \ \  \mathbb{R}^2,  \ \  k =1, \cdots, m, \ \ \\
   L_j^o[\varphi_1^o,\varphi_2^o] &=   \varepsilon^{-\gamma_{2}} J_j(\boldsymbol{\varphi}, \mathbf P),  \ \ &&\text{in} \ \  \Omega_{\varepsilon},  \ \  j =1, 2.
   \end{aligned}
   \right.
\end{equation}
In order to use the solvability results stated in Lemma \ref{lemma31}, we must impose the orthogonality conditions.  To this end, we let compactly supported functions $W_{ik}$ be radial with respect to $\xi'_k$ and satisfy
  \begin{equation*}
     \int_{\mathbb{R}^2}W_{0k}(y - \xi'_k)d y = 1,
  \end{equation*}
and compactly supported radial functions $W_{lk},~l=1,2$ satisfy
  \begin{equation*}
      \int_{\mathbb{R}^2}W_{lk}(|y - \xi'_k|)(y - \xi'_k)_l d y = 1.
  \end{equation*}
With these test functions, we modify \eqref{eq7.9} as the following problem
\begin{equation}\label{eq7.10}
\left\{
\begin{aligned}
   L^{\text{inn}}_{jk}[\varphi_{1k},\varphi_{2k}] &= \varepsilon^{-\gamma_{1}}F_{jk}(\boldsymbol{\varphi}, \textbf P)  - \sum_{l = 0, 1, 2}m_{jlk}[\varepsilon^{- \gamma_{1}}F_{jk}(\boldsymbol{\varphi}, \mathbf P)]W_{lk} \ \ \text{for} \ \  k =1, \cdots, m, \\
   L_j^o[\varphi^o_1,\varphi_2^o] &=   \varepsilon^{-\gamma_{2}} J_j(\boldsymbol{\varphi}, \mathbf P), \ \  j =1, 2,
   \end{aligned}
   \right.
\end{equation}
 where
  \begin{equation}\label{eq7.11}
    m_{j0k}[h_j] = \int_{\mathbb{R}^2} h_j d y  \ \  \text{and} \ \  m_{jlk}[h_j] = \int_{\mathbb{R}^2}h_j(y)(y - \xi'_k)_l d y. 
  \end{equation}

Thanks to Lemma \ref{lemma31} and \ref{lemma33}, we find if right hand sides in (\ref{eq7.10}) are given, then there exists a solution of 
  \begin{equation*}
     (\varphi_{11}, \cdots, \varphi_{1m}, \varphi^o_1,  \varphi_{21}, \cdots , \varphi_{2m}, \varphi^o_2).
  \end{equation*}
 to \eqref{eq7.10} provided with
\begin{equation}\label{eq7.12}
 m_{jlk}[\varepsilon^{- \gamma_{1}}F_{jk}(\boldsymbol{\varphi}, \mathbf P)] =0 \ \ \text{for} \ \ l = 0, 1, 2; \,  k = 1, 2, \cdots , m;\,  j = 1, 2.
\end{equation}
Moreover, 
   \begin{equation}\label{eq7.13}
     \begin{split}
      \varphi_{jk} &= \mathcal{A}_{jk}(\varphi_{11}, \cdots, \varphi_{1m}, \varphi^o_1,  \varphi_{21}, \cdots , \varphi_{2m}, \varphi^o_2, \mathbf{P}_0 + \mathbf{P}_1), \\
      \varphi^o_j &= \mathcal{A}_j^o(\varphi_{11}, \cdots, \varphi_{1m}, \varphi^o_1,  \varphi_{21}, \cdots , \varphi_{2m}, \varphi^o_2, \mathbf{P}_0+ \mathbf{P}_1), \\
      \mathbf{P} &= \mathcal{A}_p(\varphi_{11}, \cdots, \varphi_{1m}, \varphi^o_1,  \varphi_{21}, \cdots , \varphi_{2m}, \varphi^o_2, \mathbf{P}_0 +  \mathbf{P}_1),
      \end{split}
   \end{equation}
where $\mathcal{A}_{jk}$, $\mathcal{A}^o_j$ and $\mathcal{A}_p$ are linear operators and 
\begin{equation}\label{eq7.14}
    \mathbf{P} = \mathbf{P}_0 + \mathbf{P}_1 \ \ \text{with} \ \ \mathbf{P}_0 = (c_{110}, \cdots, c_{1m0}, c_{210}, \cdots , c_{2m0}, \xi_{10}, \cdots, \xi_{m0}).
\end{equation}
Then we use \eqref{eq7.13} and \eqref{eq7.14} to rewrite the solutions and the operators in the form of 
\begin{equation}\label{eq7.15}
   \vec{\varphi} =  (\varphi_{11}, \cdots, \varphi_{1m}, \varphi^o_1,  \varphi_{21}, \cdots , \varphi_{2m}, \varphi^o_2, \mathbf{P}_0 + \mathbf{P}_1)
\end{equation}
and
  \begin{equation}\label{eq7.16}
     \mathcal{A}(\vec{\varphi}) = (\mathcal{A}_{11}(\vec{\varphi}), \cdots, \mathcal{A}_{1m}(\vec{\varphi}), \mathcal{A}^o_1(\vec{\varphi}), \mathcal{A}_{21}(\vec{\varphi}), \cdots, \mathcal{A}_{2m}(\vec{\varphi}), \mathcal{A}^o_2(\vec{\varphi}), \mathcal{A}_p(\vec{\varphi})).
  \end{equation}
We further define 
  \begin{equation*}
  \begin{split}
     X_k= \Big\{\varphi \in L^\infty(\mathbb{R}^2): &\nabla \varphi \in L^\infty(\mathbb{R}^2); \|\varphi\|_{2 + \sigma, k} < \infty,  \\
    & \int_{\mathbb{R}^2}\varphi d y = 0 \ \ \text{and} \ \  \int_{\mathbb{R}^2} \varphi(y) (y - \xi'_k)_l d y = 0, l = 1, 2 \Big\},
     \end{split}
  \end{equation*}
  \begin{equation*}
      X_o = \Big\{\varphi \in L^\infty(\Omega_{\varepsilon}) :  \nabla \varphi \in L^\infty(\Omega_{\varepsilon}), \, \|\varphi\|_{b, o} < +\infty, \, \frac{\partial \varphi}{\partial \boldsymbol{\nu}} = 0\text{ on }\partial\Omega_{\va}\Big\}
  \end{equation*}
and 
 \begin{equation*}
 \begin{split}
   X_p = \Big\{(c_{11}, \cdots , c_{1m},& c_{21}, \cdots, c_{2m}, \xi_1, \cdots, \xi_m) \in \mathbb{R}^m \times \mathbb{R}^m \times (\mathbb{R}^2)^m: \\
       & \|\mathbf{P}\|_p = \sup_k|c_{1k}| + \sup_{k}|c_{2k}| + \sup_k|\xi_k| \Big\}.
        \end{split}
 \end{equation*}
We collect them to define $X$ as 
  \begin{equation}\label{eq7.17}
     X := \Big(\prod_{k =1}^m X_k \times X_o \Big)^2 \times X_p
  \end{equation}
equipped with the following norm
   \begin{equation}\label{eq7.18}
       \|\vec{\varphi}\|_X = \sum_{j =1, 2} \Big(\sum_{k =1}^m \|\varphi_{jk}\|_{2 + \sigma, k} + \|\varphi_j^o\|_{b, o} \Big) + \|\mathbf P\|_p.
   \end{equation}
With (\ref{eq7.17}) and (\ref{eq7.18}), we shall show the existence of solution  $\vec{\varphi}$ given by (\ref{eq7.15}) to problem (\ref{eq7.10}) in $X$ via the fixed point theorem.  First of all, we claim that for $\|\vec{\varphi}\|_X < 1$, 
   \begin{equation}\label{eq7.19}
       \|\mathcal{A}(\vec{\varphi})\|_X < 1,
   \end{equation}
   where $\mathcal A$ is defined by (\ref{eq7.16}). 
For the inner operator, by using Lemma \ref{lemma31}, we find it is sufficient to prove 
  \begin{equation}\label{eq7.20}
  \|\varepsilon^{-\gamma_{1}}CF_{jk}(\boldsymbol{\varphi}, \mathbf{P})\|_{4+\sigma, k} < 1,
  \end{equation}
  which concludes $\|\varepsilon^{-\gamma_{1}}CF_{jk}(\boldsymbol{\varphi}, \mathbf{P})\|_{4+\sigma, k} < 1,$ where $C:=\max\{C_1,C_2\}$ given in \eqref{innerprori}.  To this end, we expand $\varepsilon^{-\gamma_{1}}F_{jk}(\boldsymbol{\varphi}, \mathbf{P})$ as 
 \begin{equation}\label{eq7.21}
 \begin{split}
    \varepsilon^{-\gamma_{1}}F_{jk}(\boldsymbol{\varphi}, \mathbf{P}) = &\varepsilon^{2 - \gamma_{1}}\sum_{l=1}^5I_{jl} \\
       & + \Big[\varepsilon^{2 - \gamma_{1}}I_{j7} - \varepsilon^{2 + \gamma_{2} - \gamma_{1}}\lambda_j \bar{u}_j \phi^o_j - \varepsilon^{\gamma_{2} -\gamma_{1}}\nabla\cdot (P_j \nabla w_j^o) + \varepsilon^{\gamma_{2} - \gamma_{1}} \varphi_j^o \Delta Q_j\Big] \\
       &+\bigg[\va^{2-\gamma_1}I_{j6}+\va^{1+\gamma_2-\gamma_1}\sum_{k=1}^m\nabla_y\cdot(\varphi_j^o\nabla_xH^{\va}_{jk}(\va y,\xi_{k}))\nonumber\\
  &-\sum_{k=1}^m\sum_{l\not=k}[\nabla\cdot(U_{jl}\nabla\bar\psi_{jk})+\nabla\cdot(\varphi_{jk}\nabla\Gamma_{jl})\eta_k]\bigg]\\
  &-\nabla\cdot (U_{jk}(y-\xi_j')\nabla(\bar w_{jk}-\bar w_{jk}'))\\
      : =&  II_1 +II_2 +II_3-II_4. 
    \end{split}
 \end{equation}
 For $II_1$ , we show that
 \begin{align}\label{II1small2025}
 \|II_1\|_{4 +\sigma, k} = o(1),
 \end{align}
and only discuss 
$\nabla_y U\cdot \nabla_y \Gamma$ since the others can be treat in the same way. For $y \in B_{\delta/2\varepsilon}(\xi'_k)$, we have for $r$ large, 
  \begin{equation}\label{eq7.22}
  \begin{split}
      \varepsilon^{- \gamma_{1}}&|(1 + |y - \xi'_k|)^{4 + \sigma} \nabla U_{jl}(y - \xi'_l) \cdot  \nabla \Gamma_{jk}(y - \xi'_k)| \\
       &  \le \varepsilon^{- \gamma_{1}}C \Big|(1 + |y - \xi'_k|)^{4 + \sigma}\frac{1}{(1 + |y - \xi'_l|)^{m_j+1}} \cdot \frac{1}{1 + |y - \xi'_k|} \Big| \le C \varepsilon^{m_j - 2 - \gamma_{1} - \sigma},
   \end{split}
  \end{equation}
  where $C>0$ is a large constant.
Taking $\sigma>0$ and $\gamma_1>0$ small enough such that $m_j - 2 - \gamma_{1} - \sigma > 0$ for $j=1,2$, we then obtain from \eqref{eq7.22} that
  \begin{equation*}
     \|\nabla U_{jl}(y - \xi'_l)\cdot \nabla \Gamma_{jk}(y - \xi'_k)\|_{4 + \sigma, k}  = o(1). 
  \end{equation*}
The leading terms in $II_2$ are generated by the outer solution.  We only analyze $\varepsilon^{\gamma_{2} - \gamma_{1}}\varphi^o_j\Delta Q_j$ as an example and proceed with the same argument on the others.   
Notice that 
   \begin{equation*}
   \begin{split}
      |\Delta Q_j| &\le C \sum_{k = 1}^{m}\big(b_{j1} e^{\Gamma_{1k}}+b_{j2}e^{\Gamma_{2k}}\big)  \le C \sum_{k =1}^m \frac{1}{(1 + |y - \xi'_k|)^{\min\{m_1,m_2\}}},
      \end{split}
   \end{equation*}
   where $C>0$ is a constant.
 Therefore, 
\begin{align}\label{eq7.24}
   \varepsilon^{\gamma_{2} - \gamma_{1}} |\varphi^o_j \Delta Q_j| 
   \le& C \varepsilon^{\gamma_{2} - \gamma_{1}} \|\varphi^o_j\|_{b, o}\sum_{k =1}^m \frac{1}{(1 + |y - \xi'_k|)^{\min\{m_1,m_2\}}}\cdot   \sum_{k =1}^m(1 + |y - \xi'_k|)^{-b},
\end{align}
where $C>0$ is a constant.  Since $\min\{m_1,m_2\}  > 2 + \sigma$, we take $b > 2$ and obtain $m_j + b > 4 + \sigma$ for $j=1,2$.  Assume that $\gamma_{2} > \gamma_{1}$, we then obtain from \eqref{eq7.24} that 
  \begin{equation}\label{eq7.25}
     \|\varepsilon^{\gamma_{1} - \gamma_{2}} \varphi^o_j \Delta Q_j\|_{4 + \sigma, k} = o(1). 
  \end{equation}
Thus, we have
    \begin{equation}\label{eq7.26}
       \|II_2\|_{4 + \sigma, k} = o(1).
    \end{equation}
For $II_4$, since 
  \begin{equation*}
     \bar w_{jk} = C + O\bigg(\frac{1}{|y|}\bigg), \ \ \text{and} \ \  \bar w'_{jk} = O(\varepsilon^\sigma) + C + O\bigg(\frac{1}{|y|}\bigg) \ \ \text{as} \ \  |y| \to \infty, 
  \end{equation*}
we immediately get that 
  \begin{equation}\label{eq7.27}
      \|II_4\|_{4 + \sigma, k} = o(1),
  \end{equation}
  where the algebraic decay properties of $U_{jk}$, $j=1,2$ were used.
 For $II_3$, we focus on the worse term $\va|\nabla\varphi_{jk}\cdot\nabla \tilde H^\varepsilon_{jk}(\varepsilon y, \xi_k')|
,$
where $\tilde H$ is defined by \eqref{eq2.26}.  The other terms in $II_3$ can be treated similarly as shown above.  We find
  \begin{equation*}
    \Big| \varepsilon \nabla\varphi_{jk} \cdot  \nabla \tilde H^\varepsilon_{jk}(\varepsilon y, \xi_k') \Big| \le \frac{C\delta}{(1 + |y - \xi'|)^{4 + \sigma}},
  \end{equation*}
where $C > 0$ is a constant.  Taking $\delta$ small enough, we then obtain that 
  \begin{equation*}
    \Vert  \varepsilon \nabla\varphi_{jk} \cdot  \nabla \tilde H^\varepsilon_{jk}(\varepsilon y, \xi_k')\Vert_{4+\sigma,k} < \frac{1}{4}.  
  \end{equation*}
  Thus, by taking $\delta<\bar C_1$ with $\bar C_1$ is a constant, then we have $\Vert II_3\Vert_{4+\sigma,k}<\frac{1}{2}.$
Combining this with \eqref{eq7.21}, \eqref{eq7.26} and \eqref{eq7.27}, one gets
 \begin{equation}\label{eq7.30}
     \|\mathcal{A}_{jk}(\vec{\varphi})\|_{2+ \sigma, k} \le  \varepsilon^{- \gamma_{1}}\|CF_{jk}(\boldsymbol{\varphi}, \mathbf{P})\|_{4 + \sigma, k} < 1,
  \end{equation}
which completes the proof of \eqref{eq7.20}.

We next prove that the outer operator satisfies
  \begin{equation}\label{eq7.31}
   \|\mathcal{A}_j^o(\vec{\varphi})\|_{b, o} < 1,~j=1,2,
  \end{equation}
  provided with $\Vert \vec{\varphi}\Vert_{X}<1.$
Invoking Lemma \ref{lemma33}, we have
  \begin{equation}\label{eq7.32}
     \|\mathcal{A}^o_j(\vec{\varphi})\|_{b, o} \le C \|\varepsilon^{-\gamma_{j2}} J_j(\boldsymbol{\varphi}, \mathbf P)\|_{b + 2, o}, 
  \end{equation}
where $C> 0$ given in (\ref{345}). Thus, it suffices to show that $C \|\varepsilon^{-\gamma_{j2}} J(\varphi_j, \mathbf P)\|_{b + 2, o} < 1$. Next, we are only concerned with the error terms generated by the inner solutions $(\varphi_{1k}, \varphi_{2k})$ since they are leading order ones.  In fact, we have for $j=1,2$, 
   \begin{equation*}
   \begin{split}
     \varepsilon^{\gamma_{1} - \gamma_{2}}|\nabla \varphi_{jk} \cdot \nabla \eta_k|&\le C \varepsilon^{\gamma_{1} - \gamma_{2}} \frac{1}{(1 + |y - \xi'_k|)^{4 + \sigma}}   \\
       &\le C \frac{\varepsilon^{\gamma_{2} - \gamma_{1}}}{\delta^{2(\gamma_{2}- \gamma_{1})}}\frac{1}{(1 + |y - \xi'_k|)^{4 + \sigma + 2\gamma_{1} - 2\gamma_{2}}}.
       \end{split}
   \end{equation*}
By choosing $\delta > \bar C_2 \sqrt{\varepsilon}$, $2\gamma_{1} - 2\gamma_{2} = -\frac{\sigma}{2}$ and $b = 2 + \frac{\sigma}{2}$, one gets
  \begin{equation*}
     \varepsilon^{\gamma_{1} - \gamma_{2}}\|\nabla \varphi_{jk} \cdot \nabla \eta_k\|_{b + 2, o} \le \sigma^*,  
  \end{equation*}
where $\sigma^*$ is a small constant and constant $\bar C_2=O(1)$ is chosen to guarantee the smallness of $\sigma^*$. 
 Proceeding with the other error terms generated by the inner solutions in a similar way, we can indeed show that $C \|\varepsilon^{-\gamma_{2}} J_j(\boldsymbol{\varphi}, \mathbf P)\|_{b + 2, o} < 1$, which implies \eqref{eq7.31} holds.

   In summary, we take $\sigma \in (0, 1)$ small enough, $\delta \in (\sqrt{\varepsilon}\bar C_2, \bar C_1)$, $b=2+\frac{\sigma}{2}$, $\gamma_{1}=\frac{\min\{m_1,m_2\} -2 - \sigma}{2}$ and $\gamma_{2} = \gamma_{1} + \frac{\sigma}{4}$ to get  \eqref{eq7.19} and \eqref{eq7.31} hold.

For the contraction properties of inner and outer operators, we perform the same arguments shown above to derive that there exist constants $\alpha_1, \alpha_2 \in (0, 1)$ such that 
\begin{equation}\label{inlight2025}
  \left\{
  \begin{aligned}
     \|\mathcal{A}_{jk}[\vec{\varphi}_1] - \mathcal{A}_{jk}[\vec{\varphi}_2]\|_{2 + \sigma, k} \le \alpha_1\|\vec{\varphi}_1 - \vec{\varphi}_2\|_X;   \\
     \|\mathcal{A}^o_{j}[\vec{\varphi}_1] - \mathcal{A}^o_{j}[\vec{\varphi}_2]\|_{2 + \sigma, k} \le  \alpha_2\|\vec{\varphi}_1 - \vec{\varphi}_2\|_X
  \end{aligned}
  \right.
\end{equation}
for any $\vec{\varphi}_1, \vec{\varphi}_2 \in X$ with $\|\vec{\varphi}_1\|_X, \|\vec{\varphi}_2\|_X < 1$, where $j =1, 2$ and $k =1, \cdots, m$ and the details are omitted.

It remains to study the position operator $\mathcal A_p.$ 
 As discussed in Section \ref{choiceansatz}, 
we have $\mathbf{P}_0$ is defined as 
    \begin{equation}
     \mathbf{P} = \mathbf{P}_0 + \mathbf{P}_1 \ \ \text{with} \ \ \mathbf{P}_0 = (c^0_{11}, \cdots, c^0_{1m}, c^0_{21}, \cdots , c^0_{2m}, \xi^0_1, \cdots, \xi^0_m), 
    \end{equation}
 where $(\xi^0_1, \cdots, \xi^0_m)$ is the critical point of $\mathcal{J}_m$.  Next, we adjust $\mathbf{P}_1$ to guarantee the orthogonality conditions shown in \eqref{eq7.12} hold, i.e., $m_{jlk}[h] = 0$ with $l=1,2,3$.  We shall show that $\mathbf{P}_1$ is $o(1)$, which immediately implies that $\|\mathcal{A}_p\|_p$ is a contraction mapping and satisfies $\|\mathcal{A}_p\|_p < 1$.


We first consider the multi-interior spots case.   Focusing on the mass condition and the $k$-th inner region, we find that the leading one in the right hand side of (\ref{eq7.10}) is $\int_{\Omega_{\va}}f(U_{jk})\eta_kdy$ for $j=1,2$ with $f_j(u)=u(\bar u_j-u)$.  Then we calculate to find 
\begin{align*}
\int_{\Omega_{\va}}f(U_{jk})\eta_k dy
=&\int_{\Omega_{\va}}c_{jk}e^{\Gamma_{j,\mu_{jk}}}\bigg(\bar u_j-c_{jk} e^{\Gamma_{j,\mu_{jk}}}\bigg)\eta_kdy+O(\va^2)\\
=&\int_{B_{2\delta/\va}(\xi_k')}(c^0_{jk}+ c^1_{jk})e^{\Gamma_{j,\mu_{jk}}}\bigg[\bar u_j-(c^0_{jk}+c^1_{jk})e^{\Gamma_{j,\mu_{jk}}}\bigg] dy+O(\va^2)\\
=&\int_{\R^2}c_{jk}^0e^{\Gamma_{j,\mu_{jk}}}\big(\bar u_j-c_{jk}^0e^{\Gamma_{j,\mu_{jk}}}\big) dy\\
&+\int_{B_{2\delta/\va}(\xi_k')}(c^0_{jk}+ c^1_{jk})e^{\Gamma_{j,\mu_{jk}}}\bigg[\bar u_j-(c^0_{jk}+c^1_{jk})e^{\Gamma_{j,\mu_{jk}}}\bigg] dy-\int_{\R^2}c_{jk}^0e^{\Gamma_{j,\mu_{jk}}}\big(\bar u_j-c_{jk}^0e^{\Gamma_{j,\mu_{jk}}}\big) dy\\
&+O(\va^2)\\
=&\int_{\R^2}c_{jk}^0e^{\Gamma_{j,\mu_{jk}}}\big(\bar u_j-c_{jk}^0e^{\Gamma_{j,\mu_{jk}}}\big) dy+O(1)c_{jk}^1+O(\va^{2}),
\end{align*}
where $c_{jk}^0$ is chosen such that
$$\int_{\R^2}c_{jk}^0e^{\Gamma_{j,\mu_{jk}}}\big(\bar u_j-c_{jk}^0e^{\Gamma_{j,\mu_{jk}}}\big) dy=0.$$
Now, we take $c_{jk}^0=O(\va^2)$ to guarantee that the mass condition holds.

Next, we verify the first-moment orthogonality condition.  In fact, the leading term is $\sum\limits_{n=1}^m\sum\limits_{k=1}^m\nabla_x\cdot(U_{jk} \nabla_x (\Gamma_{jn}+  H^{\va}_{jn}))-\nabla_x\cdot(U_{jk}\nabla\Gamma_{jk})$.  We expand it and obtain for $\iota=1,2$,
\begin{equation}\label{45777}
\begin{split}
    &\sum_{n =1}^m\sum_{k =1}^m \va\int_{\Omega_\va}\nabla_y\cdot\Big(U_{jk}(y - \xi'_k)  \nabla  H^{\va}_{jn}(\va y, \xi'_n)\Big)\big(y - \xi_k'\big)_{\iota} \eta_k(y)\d y\\
    &+\sum\limits_{n=1}\sum\limits_{k\not=n}\int_{\Omega_{\va}}\nabla_y\cdot(U_{jk}(y-\xi'_k) \nabla_y( \Gamma_{jn}(y-\xi'_n)+H_{jn}^{\va}))(y-\xi_k')_{\iota}\eta_k\d y \\
     =& \sum_{n =1}^m\sum_{k=1}^m \va\int_{\Omega_\va}U_{jk} \nabla  H^{\va}_{jn} \cdot \boldsymbol{e}_{\iota} \eta_k(y) \d y+\sum_{n =1}^m\sum_{k\not=n}^m \int_{\Omega_\va}U_{jk}\nabla (\Gamma_{jn}+H_{jn}^{\va})\cdot \boldsymbol{e}_{\iota} \eta_k(y) d y \\
      + \sum_{n =1}^m\sum_{k =1}^m& \va\int_{\Omega_\va}U_{jk} \nabla  H^{\va}_{jn} \cdot  (y - \xi_k')_{\iota} \nabla \eta_k(y)d y+ \sum_{n =1}^m\sum_{k \not=n}^m \int_{\Omega_\va}U_{jk} \nabla (\Gamma_{jn}+H_{jn}^{\va}) \cdot  (y - \xi_k')_{\iota} \nabla \eta_k(y) d y \\
     &:= \bar{III}_A + \bar{III}_B.
    \end{split}
\end{equation}
We estimate $\bar{III}_A$ and $\bar{III}_B$ term by term given in (\ref{45777}). 
For $\bar{III}_B$, we have from the decay property of $U_{jk}$ and $\nabla\Gamma_{jn}$ that $|\bar{III}_B| = O(\va^{m_j})$ with $m_j>2$. For $\bar{III}_A$, similarly as shown in (\ref{nablatildeHjva2025117}), we expand
\begin{align*}
    \nabla H^{\va}_{jn}(\va y, \xi_k') \cdot \boldsymbol{e}_{\iota} 
    &= \hat c_{jn}\p_{x_{\iota}}  H(\xi_k, \xi_k) + \va\hat c_{jn} (\p^2_{x_{\iota}}  H)(\xi_k, \xi_k)\cdot (y - \xi_k') + O(\va^{\alpha}),~\iota=1,2.
\end{align*}
where $\alpha\in(0,1).$  Similarly, we have
\begin{align*}
\nabla(\Gamma_{jn}+H_{jn}^{\va})(\va y,\xi_k')\cdot \boldsymbol{e}_{\iota}=\hat c_{jn}\partial_{x_{\iota}}G(\xi_k,\xi_k)+\va \hat c_{jn}\p^2_{x_{\iota}}G(\xi_k,\xi_k)(y-\xi'_k)+O(\va^{\alpha}),
\end{align*}
where $G$ satisfies (\ref{Greenintro}).
Upon substituting the two expansions into $\bar{III}_A$ in (\ref{45777}), we further obtain that
 \begin{equation}\label{461}
 \begin{split}
  \bar{III}_A 
    & = \va \pi^2\sigma^2_jc_{j0} \p_{x_{\iota}} \bigg(\sum\limits_{j=1}^m \bar c_{k}^2H(\xi_k,\xi_k)+\sum_{j\not=l} \bar c_k\bar c_lG(\xi_k,\xi_l)\bigg)  + O(\va^{\alpha+1})\\
    &= \va \pi^2\sigma^2_jc_{j0} \p_{x_{\iota}}\mathcal J_m+O(\va^{\alpha+1}),
  \end{split}
 \end{equation}
where $\mathcal J_m$ is defined by (\ref{jm}), $\sigma_j$ and $c_{j0}$ are given in (\ref{m1m2decayrate}) and (\ref{leadingci0}), respectively.  Noting that $(\xi_{1}^0,\cdots,\xi_{m}^0)$ is a $m$-tuple critical point of $\mathcal J_m$ and $\mathcal J_m$ has a non-degenerate property, we further expand 
$$\p_{x_{\iota}}  H(\xi_k, \xi_k)=\partial_{x_{\iota}} H(\xi^0_{k},\xi^0_{k})+\partial^2_{x_{\iota}} H(\xi_{k}^0,\xi_{k}^0)\xi^{\iota}_{k1}+O(\vert\xi_{k1}\vert^2), $$
and
$$\p_{x_{\iota}}  G(\xi_k, \xi_k)=\partial_{x_{\iota}} G(\xi^0_{k},\xi^0_{k})+\partial^2_{x_{\iota}} G(\xi_{k}^0,\xi_{k}^0)\xi^{\iota}_{k1}+O(\vert\xi_{k1}\vert^2).$$
Upon substituting them into \eqref{461}, we apply the fact that $(\xi_{1}^0,\cdots,\xi_{m}^0)$ is a critical point of $\mathcal J_m$ to conclude  
$$\xi_{k1}^{\iota}=O(\va^{\bar\alpha}),~~\iota=1,2,~k=1,\cdots,m,$$
where we have used that $|\bar{III}_B|=o(\va^{m_j})$ $0<\bar\alpha<1$ and $\bar \alpha\approx 1.$
Since $\xi_{k0}$ is the critical point of $\bar H_k$ and $\mathcal J_m$ has the non-degenerate property, $\bar{III}_A$ can be written as
\begin{align*}
\bar {III}_A=\va \xi^{\iota}_{k1} \partial^2_{x_{\iota}}\bar H_k(\xi_{k0}, \xi_{k0})  + O(\va^2)+O(\vert\xi_{k1}\vert^2) \ \ \text{for} \ \   \iota = 1,2.
\end{align*}
We remark that it is straightforward to verify the other terms, e.g. $\sum\limits_{n=1}\sum\limits_{k=1}\nabla_x\cdot (\phi_{jk} \nabla_x (\Gamma_{jn}+H_n))-\nabla_x\cdot(\phi_{jk}\nabla\Gamma_{jk})$, in the divergence form operator are negligible. 
Now, we complete the proof of our claim that when $\Vert \phi\Vert_{X}<1$, $\Vert \mathcal A(\phi)\Vert_{X}<1$ and $\mathcal A_p(\vec{\phi})$ is a contraction mapping.

Define $\mathcal B$ as 
$$\mathcal B=\{\varphi\in X:\Vert \varphi\Vert_{X}<1\}.$$
Thanks to \eqref{inlight2025}, we have
$$\mathcal A(\mathcal B)\subset \mathcal B,~\text{and }\Vert \mathcal A({\vec{\varphi_1}})-\mathcal A({\vec{\varphi_2}})\Vert_{X}\leq\frac{2}{3}\Vert \vec{\varphi_1}-\vec{\varphi_2}\Vert_{X},~~\forall \varphi_1,\varphi_2\in\mathcal B.$$
It follows that there exists a solution such that $\vec{\varphi}=\mathcal A{\vec{\varphi}}.$
Now, we constructed the multi-interior spots rigorously and next focus on the multi-boundary spots.

\subsection{Construction of Boundary Spots }
In this subsection, we are concerned with the existence of multi-boundary spots.  Firstly, we introduce the transformation to straighten the boundary. 
Define the graph $\rho(x_1)$ as $(x_1, x_2) = (x_1, \rho(x_1))$ with $\rho(0) = \rho'(x)$, then for $k =1, \cdots,  m$, we transform $(y_1, y_2)$ into 
    \begin{equation}
    z_{1, k} = y_1- \xi'_{k, 1}, \ \ z_{2, k} = y_2 - \xi'_{k, 2} - \frac{1}{\varepsilon}\rho(\varepsilon (y - \xi'_{k, 1})),
    \end{equation}
where $y_1 = x_1/\varepsilon$ and $y_2 = x_2/\varepsilon$.  Moreover, we denote operator $P_{\rho, \xi_k'}$ such that for any function $\omega(y_1, y_2)$, 
  \begin{equation}
      P_{\rho, \xi'_k} \omega(y_1, y_2) = \omega(z_{k, 1}, z_{k, 2}).
  \end{equation}
Under the transformation shown above, we have the Laplacian operator and Neumann boundary operator become 
\begin{equation}
   \left\{
   \begin{aligned}
      &\Delta_y \omega = \Delta_{z, k} \omega + (\rho'(\varepsilon z_{1,k }))^2\partial^2_{z_{2,k}}\omega - 2\rho'(\varepsilon z_{1,k})\partial_{z_{1,k}, z_{2,k}}\omega - \varepsilon \rho''(\varepsilon z_{1,k})\partial_{z_{2,k}}\omega, \\
     & \sqrt{1 + (\rho'(\varepsilon z_{1,k}))^2} \frac{\partial \omega}{\partial \boldsymbol{\nu}} = \rho'(\varepsilon z_{1,k}) \partial_{z_{1,k}}\omega - [1 + (\rho'(\varepsilon z_{1,k}))^2]\partial_{z_{2,k}}\omega.
   \end{aligned}
   \right.
\end{equation}
  Without confusing the reader, we replace $(z_{k, 1}, z_{k, 2})$ by $(z_1,z_2)$ for the simplicity of notations, then obtain
    \begin{equation}\label{53720251}
        \Delta_y \omega = \Delta_z \omega + (\rho''(0))^2\varepsilon^2 z_1^2 \partial_{z_2z_2}\omega - 2 \rho''(0)\varepsilon z_1 \partial_{z_1z_2}\omega - \varepsilon \rho''(0)\partial_{z_2}\omega +O(\varepsilon^2)
    \end{equation}
and 
\begin{equation}\label{53820251}
\begin{split}
    \nabla_y \omega_1 \cdot \nabla_y \omega_2 =& \nabla_z \omega_1 \cdot \nabla_z \omega_2 + \frac{\partial \omega_1}{\partial z_2} \cdot \frac{\p \omega_2}{\partial z_2}(\rho''(0))^2 \varepsilon^2 z_1^2  \\
       & \ \  - \Big(\frac{\partial \omega_1}{\partial z_1} \cdot\frac{\partial \omega_2}{\partial z_2}  + \frac{\partial \omega_1}{\partial z_2} \cdot\frac{\partial \omega_2}{\partial z_1}  \Big)\rho''(0)\varepsilon z_1 +O(\varepsilon^2),
       \end{split}
\end{equation}
where we have used the following expansions of $\rho$ and $\rho'$ 
  \begin{equation}
     \rho(\varepsilon z_1) = \frac{1}{2}\rho''(0)\varepsilon^2 z_1^2 + O(\varepsilon^2)  \ \ \text{and} \ \  \rho'(\varepsilon z_1) = 
     \rho''(0)\varepsilon z_1 + O(\varepsilon^3).
  \end{equation} 
Due to the presence of extra terms in (\ref{53720251}) and (\ref{53820251}), we expect that there exist many new terms in the error generated by the ansatz of boundary spots compared to interior ones. Whereas, we shall show they are all higher order terms and enjoy good decay estimates while performing the fixed point argument.

Before formulating the inner problem in the half space $\R^2_{+}$, we introduce the following cut-off function $\eta_H$ 
  \begin{equation}\label{etaHK2025118}
     \eta_{H, k}(z) = 1 \ \ \text{for} \ \  z \in {\bar{\mathbb{R}}}^2_+ \cap \bar B_{\delta/\varepsilon}(\xi_k')  \ \ \text{and} \ \   \eta_{H, k} = 0 \ \ \text{for} \ \  z \in \mathbb{R}^2 \cap B^c_{2\delta/\varepsilon}(\xi_k'). 
     \end{equation}
    Invoking (\ref{etaHK2025118}), (\ref{53720251}) and (\ref{53820251}), we define the new error function as
\begin{equation} \label{withaidnew1}
\begin{split}
   N^{\rho}_{jk} = & (\rho'(\varepsilon z_1))^2\Big[\frac{\partial^2(\varphi_{H, jk}\eta_{H, k})}{\partial z_2^2} - \Big(\frac{\partial U_{jk}}{\partial z_2} \cdot \frac{\partial \bar w_{H, jk}}{\partial z_2} + \frac{\partial \bar w_{H, jk}}{\partial z^2_2}\cdot U_{jk} \\
        &+ \frac{\partial(w_{H,jk}\eta_{H,k})}{\partial z_2}\cdot \frac{\partial \Gamma_{jk}}{\partial z_2} + \frac{\partial^2 \Gamma_{jk}}{\partial z_2^2} \cdot (\varphi_{H, jk}\eta_{H, k} ) \Big) \Big]  \\
     &  - (\rho'(\varepsilon z_1))\Big[\frac{\partial(\varphi_{H, jk} \eta_{H, k})}{\partial z_1\partial z_2} - \Big(\frac{\partial U_{jk}}{\partial z_1} \cdot \frac{\partial\bar  w_{H, jk}}{\partial z_2}  + \frac{\partial U_{jk}}{\partial z_2} \cdot \frac{\partial \bar w_{H, jk}}{\partial z_1}  \Big)
        - \frac{\partial^2 \bar w_{H, jk}}{\partial z_1 \partial z_2} \cdot U_{jk} \\
       &-\Big(\frac{\partial (\varphi_{H, jk} \eta_{H, k})}{\partial z_1}\cdot \frac{\partial \Gamma_{jk}}{\partial z_2}  + \frac{\partial \Gamma_{jk}}{\partial z_2} \cdot  \frac{\partial(\varphi_{H, jk}\eta_{H, k})}{\partial z_1}\Big) - \frac{\partial^2 \Gamma_{jk}}{\partial z_1 \partial z_2} (\varphi_{H, jk}\eta_{H, k}) \Big] \\
       & - \varepsilon \rho''(\varepsilon z_1) \Big[\frac{\partial (\varphi_{H, jk}\eta_{H, k})}{\partial z_2} - U_{jk}\frac{\partial \bar w_{H, jk}}{\partial z_2} - (\varphi_{H, jk}\eta_{H,k} )\frac{\p \Gamma_{jk}}{\partial z_2} \Big],
   \end{split}
\end{equation}
where $\bar w_{H, jk} = (-\Delta+\va^2)^{-1}[(a_{j1}\varphi_{H, 1k} + a_{j2}\varphi_{H,2k})\eta_{H,k}]$. Define $\mathbf{\hat P}_1$ and $\mathbf{\hat P}_2$ as the first and second coordinates of $\xi$, then we set the parameter vector $\mathbf{P}_H$ as 
  \begin{equation}\label{withaid2}
    \mathbf{P}_H = (\mathbf{c}, \mathbf{\hat P}_{H_1}, \mathbf{\hat P}_{H_2}) = (\mathbf{c}, \mathbf{\hat P}_1, \mathbf{\hat P}_2 - \rho(\mathbf{\hat P}_1)).
  \end{equation}
With the aid of (\ref{withaidnew1}) and (\ref{withaid2}), we find 
the inner equation given in (\ref{eq7.9}) becomes 
  \begin{equation}\label{eq543new2025}
      L^{\text{inn}}_{jk}[\varphi_{1k},\varphi_{2k}] = \Big(N^{\rho}_{jk} + \varepsilon^{- \gamma_{1}} F_{jk}(P_{\rho, \xi'_k}(\boldsymbol{\varphi}), \mathbf{P}_H )\Big)\eta_{H,k} 
        := F_{H, jk} (P_{\rho, \xi'_k}(\boldsymbol{\varphi}), \mathbf{P}_H)\eta_{H, k},
        \end{equation}
where $(z_1,z_2) \in \mathbb{R}^2_+$ and $F_{jk}(\boldsymbol{\varphi}, \mathbf{P})$ is given by (\ref{FJK2025116}). 

Before estimating $F_{H,jk}$ in (\ref{eq543new2025}), we define $\xi'_{H, k} = (\xi'_{k, 1}, \xi'_{k, 2} - \frac{1}{\varepsilon} \rho(\varepsilon \xi'_{k, 1}))$ and the inner norm in the half space as
  \begin{equation}
      \|h\|_{\nu, H, k}:= \sup_{z \in \mathbb{R}_+^2}|h|(1 + |z|)^\nu.   
  \end{equation}
Moreover, we denote space $X_{k, H}$, $X_{o, H}$ and $X_{p, H}$ the same as $X_k$, $X_o$ and $X_p$ except that $\mathbb{R}^2$ and $\|\cdot\|_{2 + \sigma, k}$ are replaced by $\R^2_{+}$ and $\Vert \cdot\Vert_{2+\sigma,H,k}.$ In addition, we define the norm and inner solution for boundary spots as 
$\|\cdot\|_X$ and $\vec{\varphi}_{H,k}$.

Next, we discuss the new error $N^{\rho}_{jk}$, where the straighten operator $P_{\rho,\xi_{k}'}$ is involved.      As shown in  (\ref{withaidnew1}), the worse term is
  \begin{equation}
  \begin{split}
   &(\rho'(\varepsilon z_1))^2 \frac{\partial^2(\varphi_{H, jk}\eta_{H, k})}{\partial z_2^2} -(\rho'(\varepsilon z_1))\frac{\partial^2(\varphi_{H, jk} \eta_{H, k})}{\p z_1\partial z_2} - \varepsilon \rho''(\varepsilon z_1) \frac{\partial (\varphi_{H, jk}\eta_{H, k})}{\partial z_2}\\
    & = (\rho''(0))^2(\varepsilon z_1)^2\frac{\partial^2(\varphi_{H, jk}\eta_{H, k})}{\partial z_2^2} - \rho''(0)\varepsilon z_1\frac{\partial^2(\varphi_{H, jk} \eta_{H, k})}{\p z_1\partial z_2}- \varepsilon \rho''(0)  \frac{\partial (\varphi_{H, ik}\eta_{H, k})}{\partial z_2}+O(\va^2).
    \end{split}
  \end{equation}
 Since $|z| < \delta$ for some constant $\delta > 0$, we can chose $\delta> 0$ small enough such that 
   \begin{equation}
      \Big\|(\rho'(\varepsilon z_1))^2 \frac{\partial^2(\varphi_{H, jk}\eta_{H, k})}{\partial z_2^2} -\rho'(\varepsilon z_1)\frac{\partial^2(\varphi_{H, jk} \eta_{H, k})}{\p z_1\partial z_2} - \varepsilon \rho''(\varepsilon z_1) \frac{\partial (\varphi_{H, jk}\eta_{H, k})}{\partial z_2} \Big\|_{4 + \sigma, H, k} < \sigma_1, 
   \end{equation}
where $\sigma_1>0$ is a small constant. For the other terms in $N^{\rho}_{jk}$, we analyze in a similar way and show that 
  \begin{equation}
     \|N^{\rho}_{jk} \eta_{H, k}\|_{4 + \sigma, H, k} \le \sigma_2,
  \end{equation}
where $\sigma_2>0$ is a small constant. 

 For the contraction property of $\mathcal A_{H}(\vec{\varphi_{H}})$, we follow the argument shown in Section 5 of \cite{kong2022existence} and obtain the desired conclusion. 
          Now, we shall check the orthogonality condition exhibited in Lemma \ref{lemma32}, which is equivalent to study $\mathcal A_{p,H}(\vec{\phi}_H)$. 
It remains to check orthogonality conditions shown in \eqref{331}. Noting that the leading term in $F_{H,{jk}}$ is $\int_{\R_2^+}f(U)\eta_{H,k}dz$, we perform the similar argument shown in Subsection \ref{sub51} to get
\begin{align}\label{height1bd}
\va^2\int_{\R^2_+}f(U)\eta_{H,k} dz=O(\va^2)c_{jk1}+O(\va^4),
\end{align}
where $c_{jk1}$ is the error of $c_{jk}=c_{jk}^0+c_{jk1}$ with $c_{jk}^0$ given by (\ref{leadingci0}). 

For the first-moment orthogonality given in (\ref{331}), we follow the same procedure shown in Section 5 of \cite{kong2022existence} to derive $\xi_{H,jk1}=O(\va^{\bar\alpha})$ with $\bar\alpha<1$ but close to $1$, where the details are omitted.  we point out that $\xi_{H,jk2}\equiv 0$ since the centre of boundary spot is located at the boundary.
Since $\mathbf {P}_H=o(1)$, we have $\mathcal A_p(\vec{\phi})$ is a contraction mapping.  Then by following the same argument shown in the end of Subsection \ref{sub51}, we find the existence of the remainder term $(\varphi_{1,H},\varphi_{2,H})$ to boundary spots.  This completes the proof of Theorem \ref{thm11}.

\section{Numerical Studies and Discussion}
This section is devoted to the numerical simulation for the emergence of spot patterns in system (\ref{timedependent}).  The error threshold is set as $\epsilon=0.01$ and the maximal time is $t=2000.$

Figure \ref{fig:singlespike} presents the numerical profile of single boundary spot solution we constructed in Theorem \ref{thm11}, which is a snapshot with $t\approx 2000$ in the spatial-temporal dynamics of (\ref{timedependent}).  Here the chemotactic coefficients $\chi_1=\chi_2$ are large enough.  As shown in the figures, $(u_1,u_2)$, located at a corner of the rectangle $\Omega,$ enjoys the fast decay property in the ``far-from" corner region; while $(v_1,v_2)$ follows the shape of $(u_1,u_2)$ mildly but has a global structure, also is positively bounded from below.  These characteristics well match our theoretical analysis shown above.  We remark that the profiles of $u_1$ and $u_2$ are distinct due to the variation of matrix coefficients given in $A=(a_{ij})_{i,j=1,2}.$   In addition, the locations of $u_1$ and $u_2$ are the same since all coefficients in $A$ are positive.
\begin{figure}[h!]
\centering
\begin{subfigure}[t]{0.5\textwidth}
    \includegraphics[width=\linewidth]{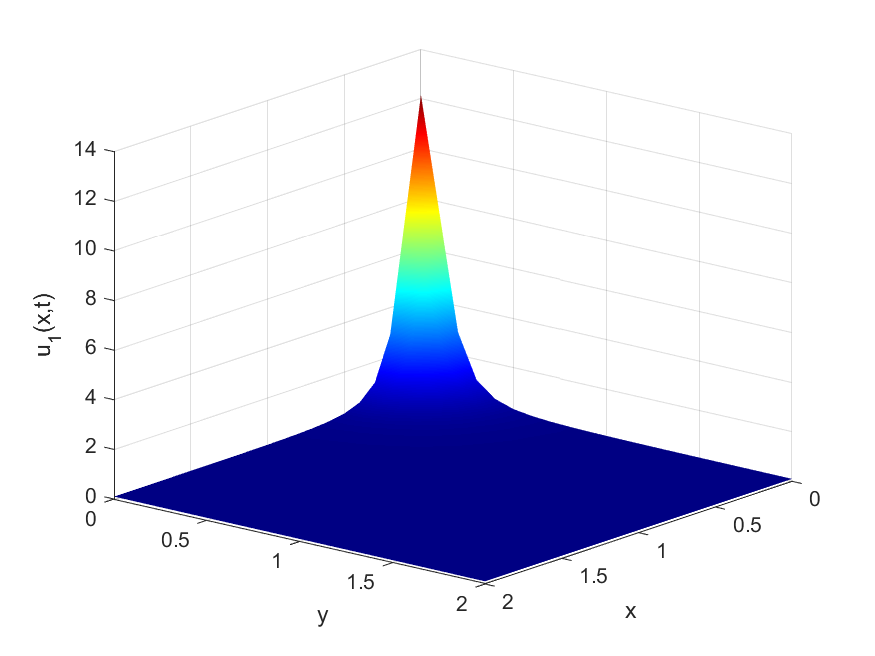}
\end{subfigure}\hspace{-0.25in}
\begin{subfigure}[t]{0.5\textwidth}
  \includegraphics[width=\linewidth]{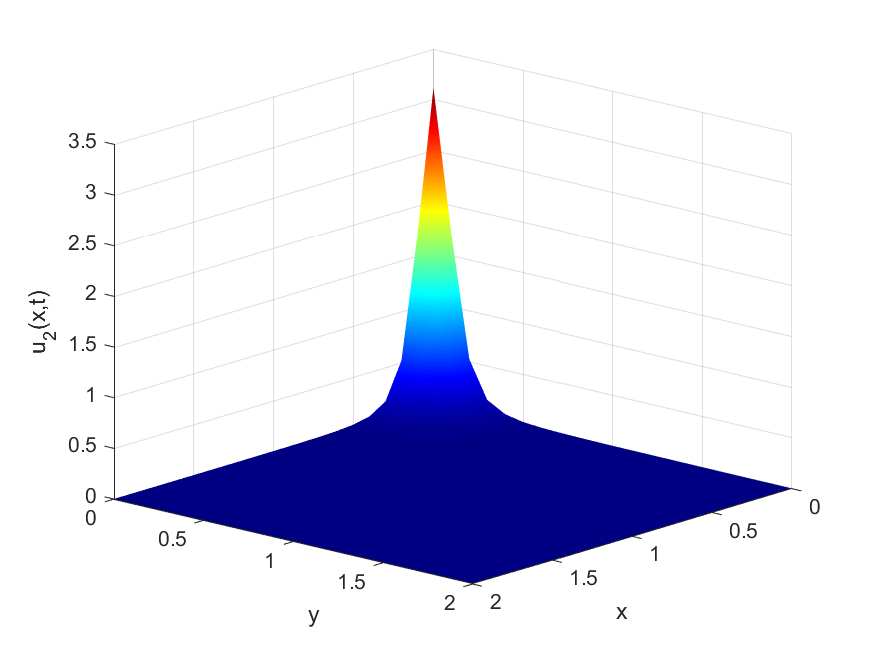}
\end{subfigure}

\begin{subfigure}[t]{0.5\textwidth}
    \includegraphics[width=\linewidth]{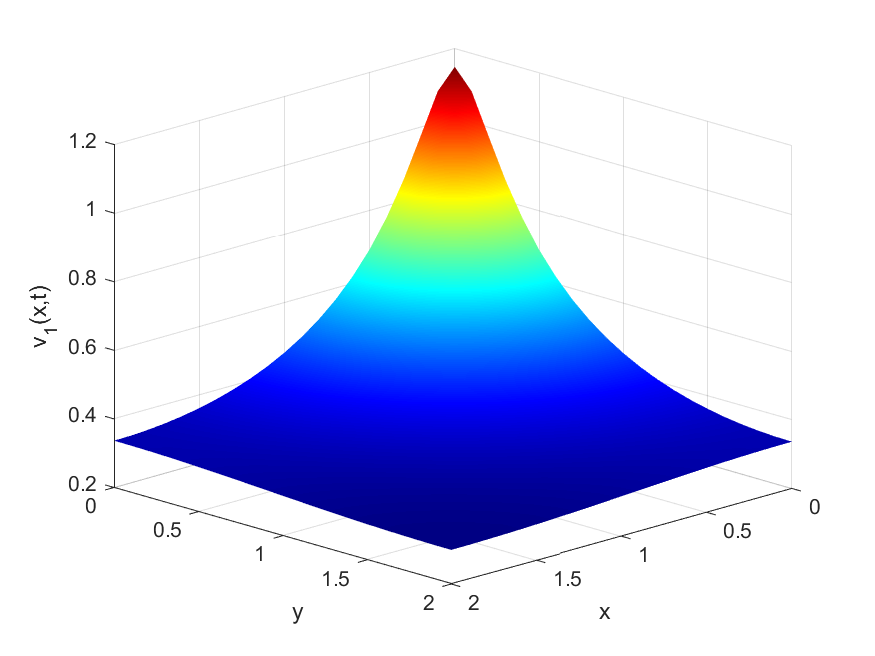}
\end{subfigure}\hspace{-0.25in}
\begin{subfigure}[t]{0.5\textwidth}
  \includegraphics[width=\linewidth]{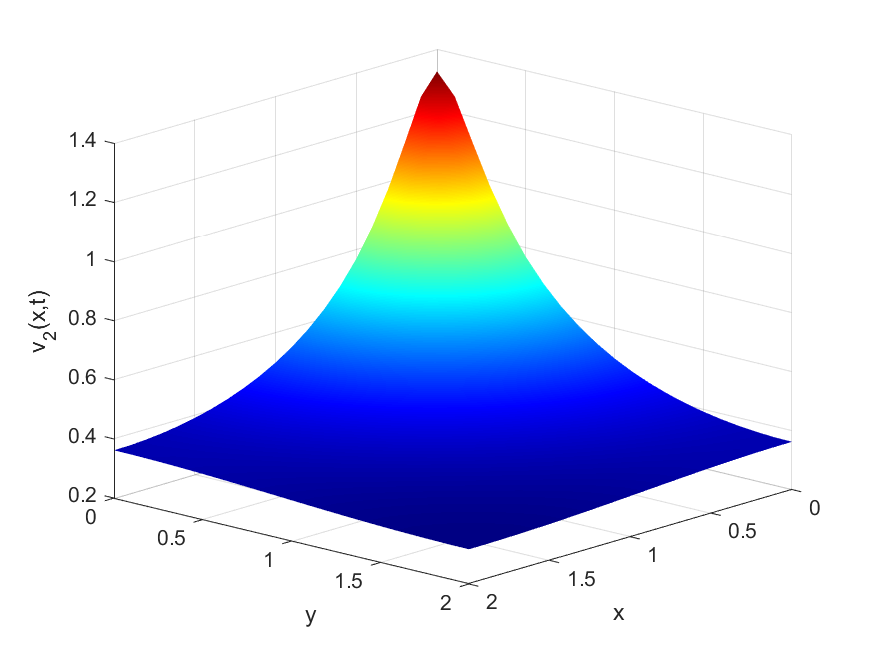}
\end{subfigure}
\\ 
\caption{The numerical profile of a single boundary spot steady state obtained by using FLEXPDE7\cite{flex2021} to (\ref{timedependent}) with $\Omega=(0,2)\times (0,2),$ where the rest parameters are set as $\chi_1=\chi_2=8.5$, $\lambda_1=\lambda_2=0.5$, $\bar u_1=2$, $\bar u_2=1,$  $a_{11}=2$, $a_{12}=1$, $a_{21}=2$ and $a_{22}=3.$  Here the initial data are chosen as $u_{10}=u_{20}=6e^{-10(x^2+y^2)}+0.1$ and $v_{10}=v_{20}=2e^{-10(x^2+y^2)}+0.1$.  The numerical solution is captured by approximating the time-dependent system (\ref{timedependent}) with \( t = 2000\).}
\label{fig:singlespike}
\vspace{-0.0in}\end{figure}

Figure \ref{fig:singlespike2} illustrates that the general form of (\ref{timedependent}) may admit the stable single interior spots in other regimes, which are different from the large chemotactic movement.  In addition, the half profiles of solutions shown in Figure \ref{fig:singlespike2} are non-monotone with respect to radius $r.$

\begin{figure}[h!]
\centering
\begin{subfigure}[t]{0.5\textwidth}
    \includegraphics[width=\linewidth]{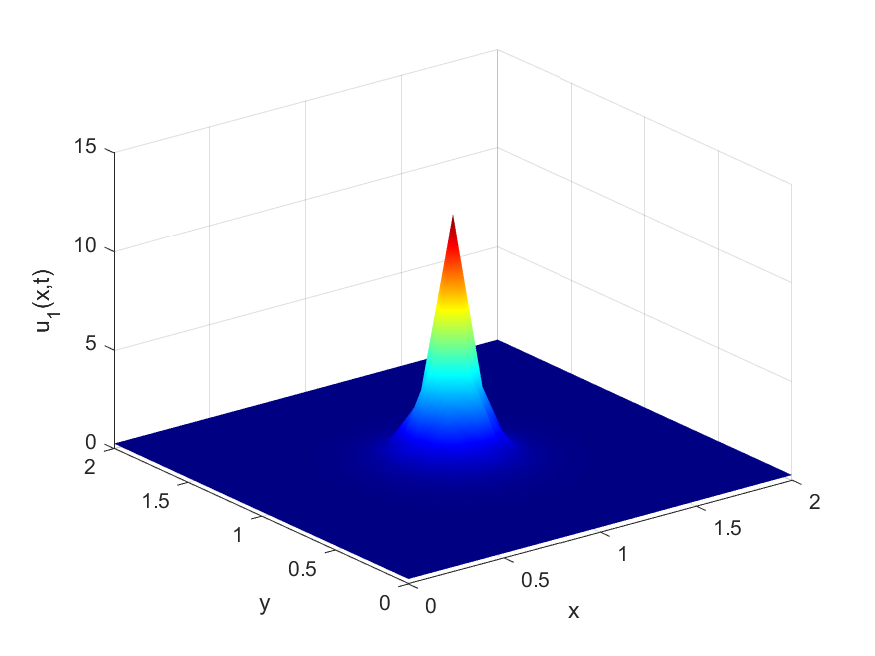}
\end{subfigure}\hspace{-0.25in}
\begin{subfigure}[t]{0.5\textwidth}
  \includegraphics[width=\linewidth]{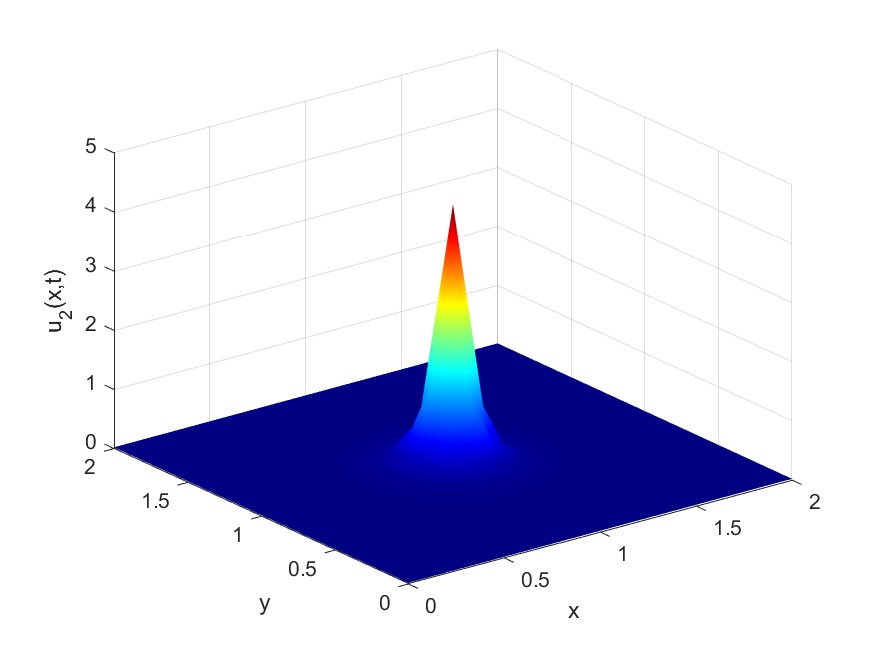}
\end{subfigure}

\begin{subfigure}[t]{0.5\textwidth}
    \includegraphics[width=\linewidth]{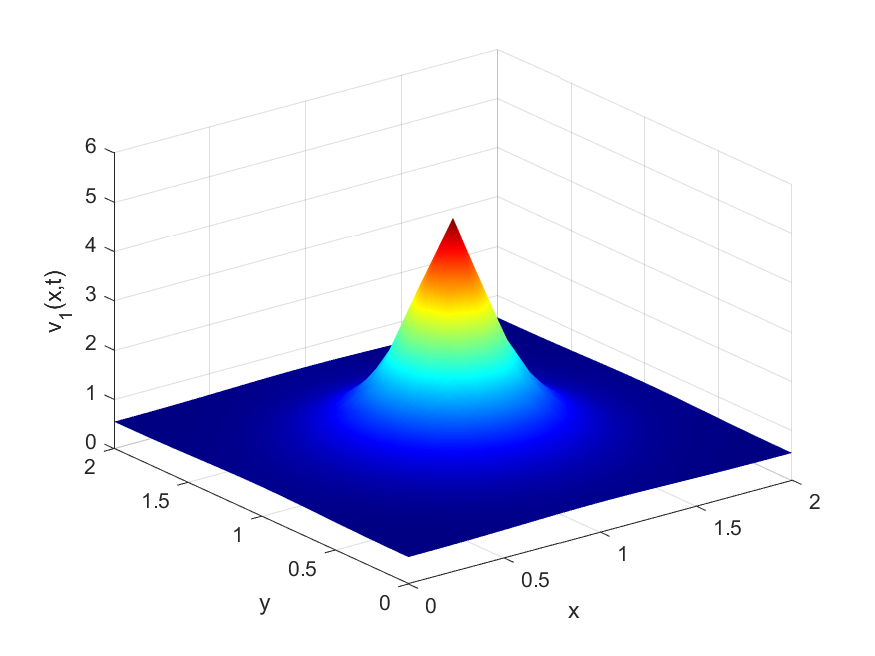}
\end{subfigure}\hspace{-0.25in}
\begin{subfigure}[t]{0.5\textwidth}
  \includegraphics[width=\linewidth]{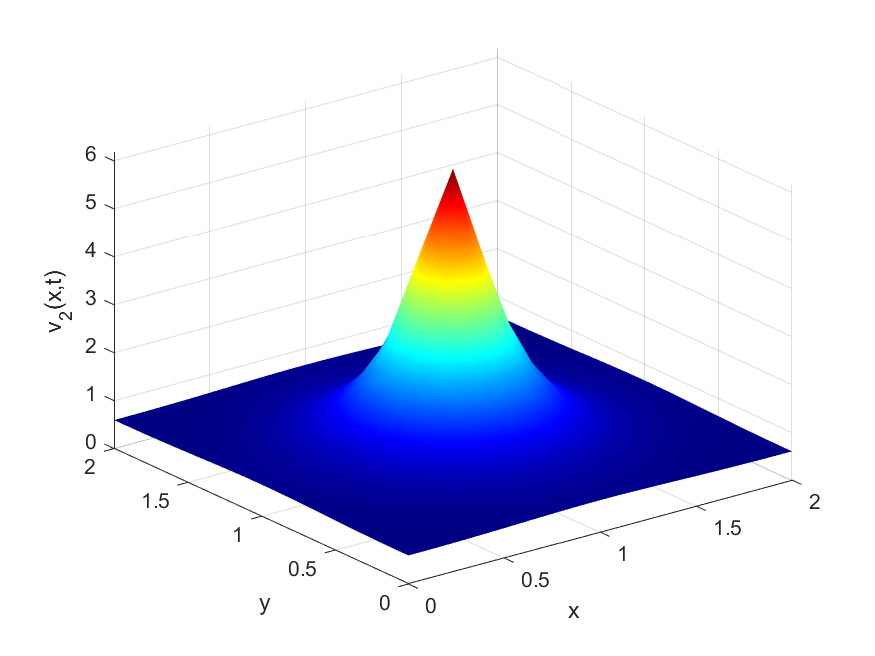}
\end{subfigure}
\\ 
\caption{The numerical profile of a single interior spot to (\ref{ss}) with $\Omega=(0,2)\times (0,2),$ where the rest parameters are the same except $\chi_1=\chi_2=1$ and $d_{v1}=d_{v2}=0.05$.  Here we incorporate chemical self-diffusion rates $d_{v1}$ and $d_{v2}$ in the $v_1$-equation and $v_2$-equation of (\ref{ss}).  In particular, the initial data are chosen as $u_{10}=u_{20}=6e^{-10[(x-1)^2+(y-1)^2]}+0.1$ and $v_{10}=v_{20}=2e^{-10[(x-1)^2+(y-1)^2]}+0.1$.  The numerical results suggest that the single interior spot in (\ref{ss}) is locally stable.}
\label{fig:singlespike2}
\vspace{-0.0in}\end{figure}

Figure \ref{fig:repulsive} demonstrates that when the coefficient matrix $A$ in (\ref{ss}) is not positive, (\ref{ss}) admits the spot steady states for sufficient large $\chi_1$ and $\chi_2$, in which cellular densities $u_1$ and $u_2$ are located at different points in $\bar\Omega.$  Also, the facts $a_{11}\not=a_{21}$ and $a_{12}\not=a_{22}$ in matrix $A$ trigger the formation of spots, where $u_1$ and $u_2$ do not share the same localized structure.

\begin{figure}[h!]
\centering
\begin{subfigure}[t]{0.5\textwidth}
    \includegraphics[width=\linewidth]{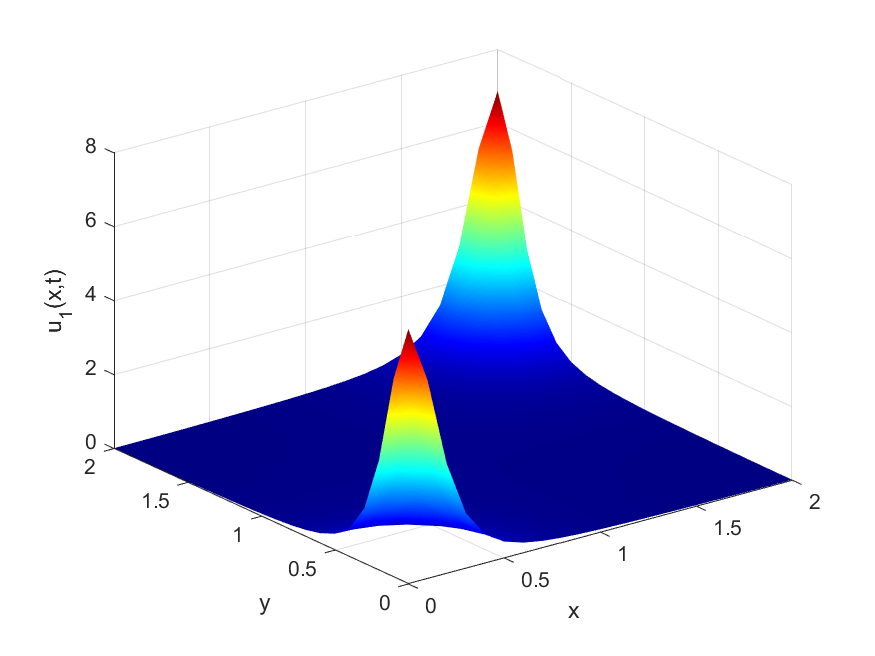}
\end{subfigure}\hspace{-0.25in}
\begin{subfigure}[t]{0.5\textwidth}
  \includegraphics[width=\linewidth]{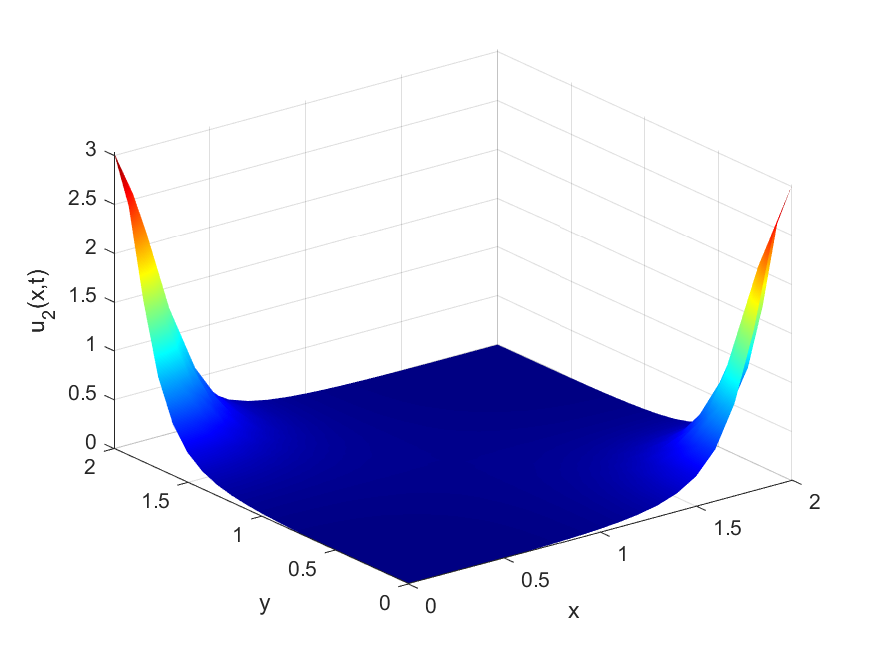}
\end{subfigure}

\begin{subfigure}[t]{0.5\textwidth}
    \includegraphics[width=\linewidth]{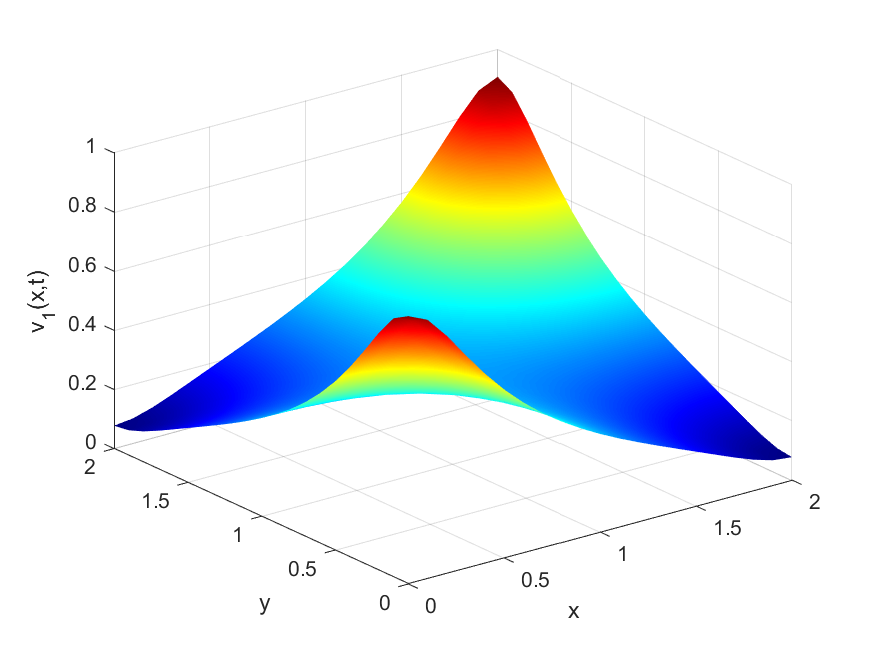}
\end{subfigure}\hspace{-0.25in}
\begin{subfigure}[t]{0.5\textwidth}
  \includegraphics[width=\linewidth]{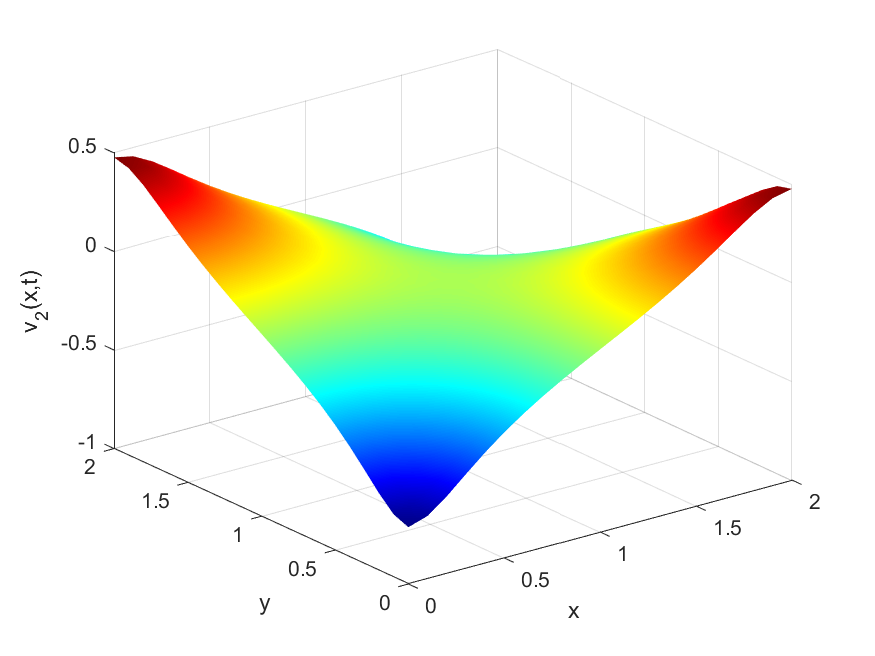}
\end{subfigure}
\\ 
\caption{The profiles of two double boundary spots to (\ref{ss}) with $\Omega=(0,2)\times (0,2).$  Here the rest parameters are the same as those stated in Figure \ref{fig:singlespike} except that $a_{12}=-1$ and $a_{21}=-2$.  These numerical findings suggest that the locations of concentrated cellular densities $u_1$ and $u_2$ are different when the interaction coefficients are negative.   }
\label{fig:repulsive}
\vspace{-0.0in}\end{figure}

\subsection{Discussion}
In this paper, we study the localized pattern formation in (\ref{timedependent}) under the asymptotical limits of $\chi_1,$ $\chi_2\rightarrow+\infty.$   Our main goal is to extend the results shown in \cite{kong2022existence} to the multi-species Keller-Segel model counterpart by employing the inner-outer gluing method.  Imposing some assumptions on the coefficient matrix $A$ such that $A$ is positive and irreducible, i.e. the interactions are both attractive, we show (\ref{ss}) admits the multi-spots given by (\ref{mainu}) and (\ref{mainv}). Compared to the core problem given in \cite{kong2022existence}, the core equation governing the profile of solution to (\ref{ss}) is still strongly coupled, which may cause the difficulty while establishing the linear theories especially in the inner region.  To overcome this, we borrow the ideas shown in \cite{lin2010profile,lin2013liouville,lin2013liouville1} and develop the inner linear theories stated in Section \ref{sect3}, where the extensive analysis of bounded kernels to the linearized Liouville system is crucial.  The main restriction in their results is the assumption that the coefficient matrix is positive.  As stated in Theorem \ref{thm11}, the profiles of cellular densities $u_1$ and $u_2$ are determined by the entire solutions $e^{\Gamma_1}$ and $e^{\Gamma_2}$ solving  (\ref{gamma}). 
 Unlike the governing profile shown in \cite{kong2022existence}, we do not have the explicit forms of $\Gamma_1$ and $\Gamma_2$ defined by (\ref{gamma}).  Whereas, we can still establish linear theories and perform the fixed point argument since the relations between algebraic decay rates of $e^{\Gamma_1}$, $e^{\Gamma_2}$  and their total mass are well understood.  

We would like to point out some intriguing research directions that deserve exploration in the future.  As discussed above, we only consider that the coefficient matrix is positive.  As shown in Figure \ref{fig:repulsive}, when this assumption is not satisfied, system (\ref{ss}) admit new types of concentrated patterns when $\chi_1$ and $\chi_2$ are large enough, where the locations of are $u_1$ and $u_2$ are at alternative corners.  The theoretical analysis for the existence of these stationary solutions is challenging but worthwhile.  Figure \ref{fig:singlespike2} reveals that the logistic multi-species Keller-Segel model under the small chemical diffusivity regime admits stable interior spots, which can not be detected under the large chemotactic movement regime.  Investigating the relevant pattern formation is open and presents an intriguing direction for future research. 
 It seems some ideas shown in \cite{kong2024existence} devoted to the single-species Keller-Segel model with logistic growth are beneficial.  

\section{Acknowledgments}
   Liangshun Xu is supported by NSFC (Grant No. 12301243) and Guangxi Young Talent  Project (Grant No. ZX02080031124004).
 J. Wei is partially supported  by   Hong Kong General Research Fund "New frontiers in singular limits of nonlinear partial differential equations.

\bibliographystyle{plain}
\bibliography{ref}

\begin{thebibliography}{10}

\bibitem{ao2016non}
Weiwei Ao, Chang-Shou Lin, and Juncheng Wei.
\newblock {\em On Non-Topological Solutions of the $A_2$ and {$B_2$} Chern-Simons System}.
\newblock Memoirs of the American Mathematical Society, 2016.

\bibitem{childress1981}
Stephen Childress and Jerome~K Percus.
\newblock Nonlinear aspects of chemotaxis.
\newblock {\em Mathematical Biosciences}, 56(3-4):217--237, 1981.

\bibitem{cortazar2020green}
Carmen Cort{\'a}zar, Manuel Del~Pino, and Monica Musso.
\newblock Green’s function and infinite-time bubbling in the critical nonlinear heat equation.
\newblock {\em Journal of the European Mathematical Society}, 22(1):283--344, 2020.

\bibitem{davila2024existence}
Juan Davila, Manuel del Pino, Jean Dolbeault, Monica Musso, and Juncheng Wei.
\newblock Existence and stability of infinite time blow-up in the {K}eller--{S}egel system.
\newblock {\em Archive for Rational Mechanics and Analysis}, 248(4):61, 2024.

\bibitem{davila2020singularity}
Juan D{\'a}vila, Manuel Del~Pino, and Juncheng Wei.
\newblock Singularity formation for the two-dimensional harmonic map flow into {$S^2$}.
\newblock {\em Inventiones mathematicae}, 219(2):345--466, 2020.

\bibitem{del2006}
Manuel del Pino and Juncheng Wei.
\newblock Collapsing steady states of the {K}eller--{S}egel system.
\newblock {\em Nonlinearity}, 19(3):661, 2006.

\bibitem{flex2021}
FlexPDE.
\newblock Solutions inc.
\newblock \textit{https://www.pdesolutions.com}, 2021.

\bibitem{herrero1996}
Miguel~A Herrero and Juan~JL Vel{\'a}zquez.
\newblock Chemotactic collapse for the {K}eller-{S}egel model.
\newblock {\em Journal of Mathematical Biology}, 35(2):177--194, 1996.

\bibitem{hillen2009}
Thomas Hillen and Kevin~J Painter.
\newblock A user’s guide to {PDE} models for chemotaxis.
\newblock {\em Journal of mathematical biology}, 58(1):183--217, 2009.

\bibitem{horstmann2004}
Dirk Horstmann.
\newblock From 1970 until present: the {K}eller-{S}egel model in chemotaxis and its consequences. ii.
\newblock {\em Jahresbericht der Deutschen Mathematiker-Vereinigung}, 106:51--69, 2004.

\bibitem{horstmann2005}
Dirk Horstmann and Michael Winkler.
\newblock Boundedness vs. blow-up in a chemotaxis system.
\newblock {\em Journal of Differential Equations}, 215(1):52--107, 2005.

\bibitem{huang2019existence}
Hsin-Yuan Huang.
\newblock Existence of bubbling solutions for the {L}iouville system in a torus.
\newblock {\em Calculus of Variations and Partial Differential Equations}, 58:1--26, 2019.

\bibitem{kong2024existence}
Fanze Kong, Michael~J. Ward, and Juncheng Wei.
\newblock {Existence, Stability and Slow Dynamics of Spikes in a 1D Minimal Keller--Segel Model with Logistic Growth}.
\newblock {\em Journal of Nonlinear Science}, 34(3):51, 2024.

\bibitem{kong2022existence}
Fanze Kong, Juncheng Wei, and Liangshun Xu.
\newblock Existence of multi-spikes in the {K}eller-{S}egel model with logistic growth.
\newblock {\em Mathematical Models Methods Applied Science}, 33(11):2227--2270, 2023.

\bibitem{lin2010profile}
Chang-Shou Lin and Lei Zhang.
\newblock Profile of bubbling solutions to a {L}iouville system.
\newblock {\em Annales de l'IHP Analyse non lin{\'e}aire}, 27(1):117--143, 2010.

\bibitem{lin2013liouville1}
Chang-shou Lin and Lei Zhang.
\newblock On {L}iouville systems at critical parameters, part 1: One bubble.
\newblock {\em Journal of Functional Analysis}, 264(11):2584--2636, 2013.

\bibitem{lin2013liouville}
Chang-shou Lin and Lei Zhang.
\newblock On {L}iouville systems at critical parameters, part 1: One bubble.
\newblock {\em Journal of Functional Analysis}, 264(11):2584--2636, 2013.

\bibitem{nanjundiah1973}
Vidyanand Nanjundiah.
\newblock Chemotaxis, signal relaying and aggregation morphology.
\newblock {\em Journal of Theoretical Biology}, 42(1):63--105, 1973.

\bibitem{painter2019mathematical}
Kevin~J Painter.
\newblock Mathematical models for chemotaxis and their applications in self-organisation phenomena.
\newblock {\em Journal of theoretical biology}, 481:162--182, 2019.

\bibitem{senba2000}
Takasi Senba and Takashi Suzuki.
\newblock Some structures of the solution set for a stationary system of chemotaxis.
\newblock {\em Advances in Mathematical Sciences and Applications}, 10(1):191--224, 2000.

\bibitem{senba2002}
Takasi Senba and Takashi Suzuki.
\newblock Weak solutions to a parabolic-elliptic system of chemotaxis.
\newblock {\em Journal of Functional Analysis}, 191(1):17--51, 2002.

\bibitem{turing1990chemical}
Alan~Mathison Turing.
\newblock The chemical basis of morphogenesis.
\newblock {\em Bulletin of mathematical biology}, 52:153--197, 1990.

\bibitem{wang2015time}
Qi~Wang, Jingyue Yang, and Lu~Zhang.
\newblock {Time-periodic and stable patterns of a two-competing-species {K}eller-{S}egel chemotaxis model: Effect of cellular growth}.
\newblock {\em Discrete and Continuous Dynamical Systems - B}, 2015.

\bibitem{wang2013mathematics}
Zhi-An Wang.
\newblock Mathematics of traveling waves in chemotaxis.
\newblock {\em Discrete \& Continuous Dynamical Systems-Series B}, 18(3), 2013.

\bibitem{wolansky1997critical}
G~Wolansky.
\newblock A critical parabolic estimate and application to nonlocal equations arising in chemotaxis.
\newblock {\em Applicable Analysis}, 66(3-4):291--321, 1997.

\bibitem{wolansky2002multi}
Gershon Wolansky.
\newblock Multi-components chemotactic system in the absence of conflicts.
\newblock {\em European Journal of Applied Mathematics}, 13(6):641--661, 2002.

\end{thebibliography}

\begin{appendices}
\section{Formal Construction of Spots}\label{appena}
In Appendix \ref{appena}, we shall employ the matched asymptotic analysis to reconstruct the multi-spots (\ref{mainu}) and (\ref{mainv}), which are complementary of our rigorous analysis.  Without loss of generality, we only consider the single interior spot case.

First of all, let $\bar v_1=\chi_1 v_1$ and $\bar v_2=\chi_2 v_2$ in (\ref{ss}), we have
\begin{align}\label{timeschi}
\left\{\begin{array}{ll}
0=\Delta u_1-\nabla\cdot(u_1\nabla \bar v_1)+ \lambda_1 u_1(\bar u_1-u_1),&x\in\Omega,\\
0=\Delta u_2-\nabla\cdot(u_2  \nabla\bar v_2)+ \lambda_2 u_2(\bar u_2-u_2),&x\in\Omega,\\
0=\Delta \bar v_1-\bar v_1+a_{11}\chi_1 u_1+a_{12}\chi_1 u_2,&x\in\Omega,\\
0=\Delta \bar v_2-\bar v_2+a_{21}\chi_2 u_1+a_{22}\chi_2 u_2,&x\in\Omega,\\
\partial_{\textbf{n}}u_1=\partial_{\textbf{n}}u_2=\partial_{\textbf{n}}v_1=\partial_{\textbf{n}}v_2=0,&x\in\partial\Omega.
\end{array}
\right.
\end{align}
Let $\chi_1=\frac{1}{\varepsilon^2}\gg 1$ with $\chi_2=\gamma\chi_1$ and in the inner region, we introduce
$$y=\frac{x-\xi}{\varepsilon},~U_i(y)=\bar u_i(x),~\bar V_i(y)=\bar v_i(x),~i=1,2,$$
where $\xi$ denotes the center. 

Then, we expand 
$$U_{i}(y)=U_{i0}+\varepsilon^2 U_{i1}+\cdots,~\bar V_{i}(y)=\bar V_{i0}+\varepsilon^2 \bar V_{i1}+\cdots,$$
and obtain from (\ref{timeschi}) that the leading order equation is 
\begin{align}\label{timeschi1}
\left\{\begin{array}{ll}
0=\Delta U_{10}-\nabla\cdot(U_{10}\nabla \bar V_{10}),&y\in\mathbb R^2,\\
0=\Delta U_{20}-\nabla\cdot(U_{20}  \nabla\bar V_{20}),&y\in\mathbb R^2,\\
0=\Delta \bar V_{10}+a_{11} U_{10}+a_{12} U_{20},&y\in\mathbb R^2,\\
0=\Delta \bar V_{20}+a_{21}\gamma U_{10}+a_{22}\gamma U_{20},&y\in\mathbb R^2.
\end{array}
\right.
\end{align}
Then we let $\nu:=-\frac{1}{\log\varepsilon},$ further expand for $i=1,2,$
$$U_{i0}=U_{i0,0}+\nu U_{i0,1}+\cdots,~~\bar V_{i0}=\frac{1}{\nu}\bar V_{i0,0}+\bar  V_{i0,1}+\cdots.$$
Upon substituting the expansion into (\ref{timeschi1}), one has $\bar V_{i0,0}=D_{i0}$ with constant $D_{i0}$ determined later on and
\begin{align}\label{timeschi11}
\left\{\begin{array}{ll}
0=\Delta U_{10,0}-\nabla\cdot(U_{10,0}\nabla \bar V_{10,1}),&y\in\mathbb R^2,\\
0=\Delta U_{20,0}-\nabla\cdot(U_{20,0}  \nabla\bar V_{20,1}),&y\in\mathbb R^2,\\
0=\Delta \bar V_{10,1}+a_{11} U_{10}+a_{12} U_{20},&y\in\mathbb R^2,\\
0=\Delta \bar V_{20,1}+a_{21}\gamma U_{10}+a_{22}\gamma U_{20},&y\in\mathbb R^2.
\end{array}
\right.
\end{align}
Then, we find $U_{i0,0}=C_{i,0}e^{\bar V_{i0,1}}$ with $i=1,2$ and
\begin{align}\label{timeschi111}
\left\{\begin{array}{ll}
0=\Delta \bar V_{10,1}+a_{11} C_{1,0}e^{\bar V_{10,1}}+a_{12}C_{2,0}e^{\bar V_{20,1}} ,&y\in\mathbb R^2,\\
0=\Delta \bar V_{20,1}+a_{21}\gamma C_{1,0}e^{\bar V_{10,1}}+a_{22}\gamma C_{2,0}e^{\bar V_{20,1}},&y\in\mathbb R^2.
\end{array}
\right.
\end{align}
Let $\tilde y=\sqrt{C_{1,0}}y$, then (\ref{timeschi111}) becomes
\begin{align}\label{timeschi1111}
\left\{\begin{array}{ll}
0=\Delta \bar V_{10,1}+{a_{11}}e^{\bar V_{10,1}}+\frac{a_{12}C_{2,0}}{C_{1,0}}e^{\bar V_{20,1}} ,&\tilde y\in\mathbb R^2,\\
0=\Delta \bar V_{20,1}+a_{21}\gamma e^{\bar V_{10,1}}+a_{22}\gamma \frac{C_{2,0}}{C_{1,0}}e^{\bar V_{20,1}},&\tilde y\in\mathbb R^2.
\end{array}
\right.
\end{align}
Assume that
\begin{align}\label{assumeappendix}
a_{12}C_{20}=a_{21}\gamma C_{10},
\end{align}
then as shown in \cite{lin2010profile}, we have (\ref{timeschi1111}) admits a family of solution pair ($\bar V_{10,1}$,$\bar V_{20,1}$) depending on ($\mu_1$,$\mu_2$).  It follows that 
\begin{align}\label{ui002025213}
U_{i0,0}(y)=C_{i,0}e^{\bar V_{i0,1}(\tilde y)},~i=1,2.
\end{align}
Moreover, the integral constraints imply for $i=1,2,$ 
$$C_{i,0}=\frac{\bar u_i\int_{\mathbb R^2} e^{\bar V_{i0,1}}d\tilde y}{\int_{\mathbb R^2} e^{2\bar V_{i0,1}}d\tilde y}.$$
Of concern (\ref{assumeappendix}), we find the following condition is assume to hold
$$\frac{\int_{\mathbb R^2}e^{ \bar V_{10,1}}\,d\tilde y\bar u_1}{\int_{\mathbb R^2}e^{ \bar V_{20,1}}\,d\tilde y\bar u_2}=\frac{a_{12}}{a_{21}}\frac{\chi_1}{\chi_2}\frac{\int_{\mathbb R^2}e^{2 \bar V_{10,1}}\,d\tilde y}{\int_{\mathbb R^2}e^{2 \bar V_{20,1}}\,d\tilde y}.$$
In addition, we have by Pohozaev identity that 
$$4(\sigma_1+\sigma_2) =b_{11}\sigma_1^2+2b_{12}\sigma_1\sigma_2+b_{22}\sigma_2^2,$$
where $b_{ij}$, $i,j=1,2$ are given by \eqref{bvaluenew2024104} and
$$\sigma_i=\frac{1}{2\pi}\int e^{\bar V_{i0,1}}d\tilde y,~i=1,2.$$

{In the outer region}, we approximate $u_1$ and $u_2$ as Dirac-delta functions to obtain for $i=1,2$,
\begin{align*}
\Delta \bar v_1-\bar v_1\sim - \bigg(a_{11}\int_{\mathbb R^2} U_{10,0}d\tilde y+a_{12}\int_{\mathbb R^2} U_{20,0}d\tilde y\bigg)\delta(x-\xi),
\end{align*}
and
\begin{align*}
\Delta \bar v_2-\bar v_2\sim - \bigg(a_{21}\gamma\int_{\mathbb R^2} U_{10,0}d\tilde y+a_{22}\gamma\int_{\mathbb R^2} U_{20,0}d\tilde y\bigg)\delta(x-\xi),
\end{align*}
where $U_{10,0}$ and $U_{20,0}$ are defined by \eqref{ui002025213}.
Hence,
\begin{align}
\bar v_i=2\pi m_i G(x;\xi),
\end{align}
where $m_i$ are given in \eqref{m1m2decayrate} and $G$ is the Neumann reduced-wave Green's function satisfying
\begin{equation}
   \left\{\begin{array}{ll}
   \Delta G-G=-\delta(x-\xi),&x\in\Omega,\\
   \partial_{\textbf{n}}G=0,&x\in\partial\Omega.
   \end{array}
   \right.
\end{equation}


Finally, we match the inner and outer solutions to determine parameters $D_{i0}$, $\mu_i$ with $i=1,2$.  Recall that as $\tilde y\rightarrow+\infty,$ the inner solution satisfies 
$$\bar V_{i0}\sim \frac{1}{\nu}D_{i0}+\bar \mu_i+m_i\log |\tilde y|.$$
On the other hand, the far-field behavior of the outer solution is
$$\bar v_{i}\sim 2\pi m_i\bigg[-\frac{1}{2\pi}\log|x-\xi|+H(\xi,\xi)+\nabla H(\xi,\xi)\cdot(x-\xi)\bigg].$$
Since $\frac{\tilde y}{\sqrt{C_{1,0}}}=\frac{x-\xi}{\varepsilon}$, we match the inner and outer solutions implies
$$ D_{i0}= m_i,~~\mu_i=2\pi m_iH(\xi,\xi),~~\nabla H(\xi,\xi)=0,$$
where $\mu_i:=\bar \mu_i-m_i\log\sqrt{C_{1,0}}.$


\end{appendices}

\end{document}